\newcommand\jy[1]{{ y}^{(#1)}}
 \definecolor{ForestGreen}{RGB}{34, 139,  34}
 \theoremstyle{definition}
 \newtheorem{result}{Result}
\newtheorem{theorem}{Theorem}[section]
\newtheorem{proposition}[theorem]{Proposition}
\newtheorem{lemma}[theorem]{Lemma}
\newtheorem{corol}[theorem]{Corollary}
\theoremstyle{definition}
\newtheorem{remark}[theorem]{Remark}
\newtheorem{example}[theorem]{Example}
\newtheorem{definition}[theorem]{Definition}
 \newcommand{\beq}{\begin{equation}}
\newcommand{\eeq}{\end{equation}}
\newcommand{\C}{\mathbb{C}}
\newcommand{\R}{\mathbb{R}}
\newcommand{\Z}{\mathbb{Z}}
\newcommand{\Q}{\mathbb{Q}}
\newcommand{\PP}{\mathbb{P}}
\newcommand{\Chi}{X}
\newcommand{\Chib}{\boldsymbol{X}}
\newcommand{\Af}{\mathcal{A}}
\newcommand{\SAf}{\mathcal{SA}}
\newcommand{\SE}{\mathcal{SE}}
\newcommand{\PGL}{\mathcal{PGL}}
\DeclareMathOperator\Sym{Sym}
\DeclareMathOperator\Stab{Stab}
\DeclareMathOperator\val{val}
\DeclareMathOperator\mult{mult}
\newcommand{\SymG}{\Sym(\Chi,G)}
\newcommand{\StabG}{\Stab(\Chi,G)}
\newcommand{\I}{\mathcal{I}}
\newcommand{\imS}{\mathcal{S}}
\newcommand{\s}{\sigma}
\newcommand{\bs}{\boldsymbol{\sigma}}
\newcommand{\bphi}{\boldsymbol{\phi}}
\newcommand{\F}{\textbf{F}}
\newcommand{\G}{\textbf{G}}
\newcommand{\T}{\textbf{T}}
\newcommand{\p}{\mathbf{p}}
\renewcommand{\L}{{\bf L}}
\newcommand{\ISE}{\I^{\mathcal{SE}}}
\newcommand{\IA}{\I^{\mathcal{A}}}
\newcommand{\IP}{\I^{\mathcal{P}}}
\newcommand{\ISA}{\I^{\mathcal{S}\mathcal{A}}}
\newcommand{\pp}{\mathbf{p}}
\newcommand{\ba}{\mathbf{a}}
\newcommand{\bc}{\mathbf{c}}
\newcommand{\Y}{\mathcal Y}
\newcommand{\pd}[2]{\frac{\partial #1}{\partial #2}}
\newcommand{\ord}{{\rm ord}}
\newtheorem{assum}[theorem]{Assumption}
\newcommand{\gdeg}{\tau}
\newcommand{\CP}{\C\PP^2}
\def\ssig{{\mathfrak S}}
\def\La{{\bf L_{\ba}}}
\def\Lat{{\bf L_{\tilde{\ba}}}}
\title{Differential signatures of algebraic curves}
\author{Irina A. Kogan, Michael Ruddy, Cynthia Vinzant \\ \small{North Carolina State University, Raleigh, NC, USA 27695}}
\begin{document}
 \maketitle

%\tableofcontents

\abstract{In this paper, we adapt the differential signature construction to the equivalence problem for complex plane algebraic curves  under the actions of the projective group and its subgroups.  Given an action of a group $G$, a signature map assigns to a plane algebraic curve another  plane algebraic  curve (a signature curve) in such a way that two  generic curves have the same signatures  if and only if they  are $G$-equivalent.  We prove  that  for any $G$-action,   there exists  a pair of rational differential invariants, called classifying invariants, that  can be used to construct signatures. We derive a   formula for the degree of a signature curve in terms of the degree of the original  curve, the size of its symmetry group and some quantities  depending on a  choice  of classifying invariants.  We  show that  all generic curves have signatures of the same degree and this degree is the sharp upper bound. For the  full projective  group,  as well as for its affine, special affine and special Euclidean subgroups, we give explicit sets of rational classifying invariants and derive a formula  for the degree of the signature curve of a generic curve as a quadratic function of the degree of the original curve.}

\vskip2mm
\noindent{\bf Keywords:} Algebraic curves; projective action; affine action; Euclidean action;  equivalence classes of curves; differential invariants; classifying invariants; signatures; Fermat curves.

\vskip2mm
\noindent{\bf 2010 Mathematics Subject Classification:} 14H50; 14Q05; 14L24; 53A55; 68W30

% 14H50 plane and space curves
%14Qxx		Computational aspects in algebraic geometry, 14Q05  	Curves 
	%14L24  	Geometric invariant theory 
%68W30  	Symbolic computation and algebraic computation
%%	53A55  	Differential invariants (local theory), geometric objects%%
%68T45. 	Machine vision and scene understanding

\section{Introduction}
In the most general terms the group equivalence problem can be stated as follows: given  an action of a group $G$  on a set of objects, decide whether or not one  object can be transformed to another by a group element. An  elementary  geometry problem of deciding whether or not two triangles are congruent under  the  action of the group of rigid motions  (the Euclidean group) is an example. Many problems in  mathematics and applications can be reformulated in this manner,  and  equivalence  problems  are closely related to  many important  classification problems. 

The differential signature construction  originated from  Cartan's method for solving  equivalence problems for smooth manifolds  under  Lie group actions \cite{C53}. Signatures and, in particular, signatures of smooth curves gained popularity in many applications, such as image processing, computer vision,  and automated puzzle assembly  \cite{Bou00, CO98, HO14,grim-shakiban17}.   The differential signature construction for curves consists of the following steps: (1) an action of a group on a plane is prolonged to the jet space of  curves of sufficiently high order;  (2) on this jet space,  a pair of  independent differential invariants is constructed;  (3)  the restriction of this pair  to a given a curve parametrizes the signature curve.    Since the signature  is based  on invariants,  two equivalent curves have the same signature. The challenge lies in finding a pair of invariants so that (most) non-equivalent curves have different signatures.  In principle, such a pair of invariants can be found either by  the classical moving frame method formulated by Cartan \cite{C35} or by its modern generalization by Fels and Olver  \cite{FO99}, although in practice this may be challenging for large groups. 
The invariants obtained by the moving frame method are, in general, only locally defined and are designed to solve  local equivalence problems, i.e.~a problem of deciding whether or not  there exist  segments of two  smooth curves that are $G$-equivalent. The challenges  arising when using  these signatures for  solving  global equivalence problems for smooth curves, even in the case of the well-studied Euclidean action, underscored  are in   works  \cite{Hi12}, \cite{HO13} and \cite{MN09}.  

In  contrast with the smooth case,  any two irreducible algebraic curves that are locally equivalent are also globally  equivalent.
In addition, in the algebraic setting we can take advantage of  well-developed computational algebra algorithms to compute, compare and analyze signature curves. In order to take the full advantage of this machinery, we need to  build signature from \emph{rational invariants}, in which case the signature of an algebraic curve is again algebraic. The differential invariants obtained by the classical Cartan moving  frame method (called normalized invariants) or their counterparts obtained  Fels-Olver generalization (called replacement invariants) are not rational in general. In fact, an algebraic adaptation of the Fels-Olver given in \cite{hk:focm} shows that  local replacement  invariants, in general, are  algebraic over the field of global rational invariants. 

As  the first main result of this paper, we prove  existence of  two \emph{rational} differential invariants  that   can be used to construct signatures with good separation properties:  

\begin{result} Let $G\subset \PGL(3)$ be any closed algebraic subgroup of the projective group of positive dimension. Then there exists \emph{a pair of  rational differential invariants}, called \emph{classifying invariants}, of differential order at most equal to the $\dim G$, such that the signatures based on these invariants characterize equivalence classes of  generic  algebraic curves of degree $d$ for all  $d$ such that  $ \binom{d+2}{2} -2\geq \dim G$. See Theorem~\ref{Thm:ClassMain}.
\end{result}

Here and throughout the paper we formulate several results for a  {\it  generic curve} of degree $d$. This means that  there exists  a nonempty Zariski-open subset $\mathcal P_d$ of the  vector space $\C[x,y]_{\leq d}$ of all polynomials of degree at most $d$, such that a result is valid   for all  curves whose defining polynomials lie in $\mathcal P_d$.  
 
Given a pair of rational classifying invariants, the signature of a curve  $\Chi\subset \C^2$ is constructed as follows.  The restriction of classifying invariants to an algebraic curve $\Chi\subset \C^2$ defines a rational map  $\s_{\Chi}:\Chi\dashrightarrow \C^2$ called the \emph{signature map}.
Its image $\imS_{\Chi} = \s_{\Chi}(\Chi)$ is called the \emph{signature} of $\Chi$, and it is a Zariski-dense\footnote{The density statement is not valid over $\R$.  See, for instance, \cite[Example 1]{BKH}.} subset of its closure $\overline {\imS_{\Chi}}$, called the \emph{signature curve of $\Chi$}.   The defining polynomial for  the signature curve can be explicitly computed using elimination algorithms, as was studied in \cite{BKH}. However this computation is not always practically feasible and it is natural to ask what properties of  signature curves  can be determined  a priori. As the first step in this direction, we obtain a formula for the degree of the signature curve.
\begin{result}\label{res-degree} 
For a fixed algebraic group and a fixed set of classifying invariants, we derive  a formula for the degree of the 
signature curve of an algebraic curve in terms of the degree of the original curve, the size of its symmetry group, 
and  some quantities that depend on a  choice  of classifying invariants. See Theorem~\ref{Thm:Main1B}. We show that  signatures of generic curves all have the same degree and this degree provides the strict upper abound. See Theorem~\ref{Thm:Upperb}
\end{result}

One consequence of Theorem~\ref{th-class} is that, over $\C$, a classifying set of differential invariants can be computed by an algorithm for computing generators for the field of rational invariants, such as algorithms presented in \cite{DK15} and \cite{hk:jsc}. However the running time for these algorithms  can be prohibitively large for large groups, and these algorithms may produce a redundant set of generators from which  two appropriate invariants must be chosen. For the actions of the full projective group $\PGL(3)$ and its classical subgroups it appears more practical to build  rational classifying  invariants from the classical (non-rational) differential invariants.  
 We give explicit formulas for the classifying pairs for the special Euclidean $\SE(2)$, the special affine $\SAf(2)$, the affine $\Af(2)$ and projective $\PGL(3)$ groups. These groups  are especially relevant in computer vision and image processing.  We derive formulas for the degrees of signatures of generic curves  based on these pairs of invariants and show that these degrees are sharp upper bounds.
\begin{result}
For the actions of the full projective group $\PGL(3)$, and  its  subgroups such as the special Euclidean $\SE(2)$, the special affine $\SAf(2)$, the affine $\Af(2)$ and the classifying pairs of invariants  given by  \eqref{eq:InvariantsJet}, we find an upper bound for the degree of the algebraic signature of a plane curve of degree $d$. See Theorem~\ref{Thm:DegBound}.  From Result~\ref{res-degree}  we know this bound is tight for generic curves.
\end{result}

While the results are proved for complex curves under the action of  complex algebraic groups,  for many practical applications solving equivalence problems over the real field is important. For this reason, throughout the paper we often compare and contrast with the real  case. In particular, we would like to note that the pair of classifying invariants  \eqref{eq:InvariantsJet} can be proved to be classifying over $\R$ (see \cite{BKH}).  Therefore,   the signature of the real part of a complex curve $\Chi$ is contained in the   real part of the signature  curve $\overline {\imS_{\Chi}}$. Thus the degree results obtained in Theorem~\ref{Thm:DegBound} are also  applicable in the real case.

The paper is structured as follows. In Section~\ref{Sec:Differential} we review known results  about actions and invariants of algebraic groups, as well as the results about the jet spaces and differential invariants. We then prove our first main result about the existence  of a pair of classifying invariants.  Additionally we establish some basic facts about the relationship of the symmetry group of a curve and the curve's signature map, which play an important role in the degree formulas. In Section~\ref{Sec:Algebraic} we review some necessary definitions and theorems of algebraic geometry and prove our second  main result, which is a formula for the degree of the signature polynomial. In Section~\ref{Sec:Projective} we examine the signature polynomial for some specific examples of subgroups of the projective group and prove our third main result  about the degree of signatures of the generic curves for these groups. We also consider the family of Fermat curves, defined by polynomials $F_d(x,y)=x^d+y^d+1$, to show that the degree of a signature curve may be significantly lower than the generic degree. For this family, we give explicit formulas for signatures polynomials for all $d$ under the actions of the projective and affine group.   

Although the paper contains only few examples and computational details, the Maple code  and a large selection of examples are available on an online supplementary material page  \url{https://mgruddy.wixsite.com/home/dsag-supplementarymaterials}.

\paragraph{Acknowledgements.} 
We would like to thank Bojko Bakalov, Peter Olver, Kristian Ranestad, and Dmitry Zenkov  for helpful discussions and suggestions regarding this project. 
This work was supported in part  by the National Science Foundation grants DMS-1620014 and 
CCF-1319632.

\section{Differential invariants and signatures of algebraic curves} \label{Sec:Differential}
In this section, we prove our main  structural results about  the field of rational differential invariants and signatures of algebraic curves.
We start by reviewing,  in Section~\ref{Sec:AlgGroup},  known results  about actions and invariants of algebraic groups. In Section~\ref{Sec:Curves}, we consider the  action of the projective group and its subgroups on algebraic curves, give definitions of equivalence and symmetry for algebraic curves, and prove some useful results about the symmetry groups of curves (Propositions~\ref{Thm:SymEff}-\ref{pr:FOrbit}). In Section~\ref{Sec:Inv} we prolong the action to the jet space of curves and  define the notion of rational  classifying differential invariants. We  prove  an important structural result about the field of rational differential invariants (Theorem~\ref{th-gen}), as well  the existence of a classifying set (Theorem~\ref{th-class}). In Section~\ref{Sec:DiffCurve}, we show how differential invariants are evaluated on  an algebraic curve. We  define the notion of exceptional curves and show that  generic curves are non-exceptional (Theorem~\ref{thm-non-excep-generic}). Section~\ref{Sec:Sig}, we define the signature map and the signature curve of a non-exceptional  algebraic curve. We show that   signatures characterize the equivalence classes of generic algebraic curves (Theorem~\ref{Thm:ClassMain}) and prove that the signature map  of a curve $X$ is generically $n$ to one where $n$ is the cardinality of  the symmetry group of $X$ (Theorem~\ref{Thm:NToOne}).

\subsection{Actions and invariants of algebraic groups}\label{Sec:AlgGroup}
In this section, we review common  definitions and known results about actions and invariants of algebraic groups on algebraic varieties. The exposition follows \cite{vinberg89}, and we refer to this publication  for details, proofs, \ and further references.

 Throughout the section the ground field is $\C$ and the terms ``open'' and ``closed'' refer to Zariski topology.
An  {\it  algebraic group} is an algebraic variety equipped with a group structure. 
\begin{definition}Let $\Y$ be an affine or a projective variety. A {\it  rational action} 
of an algebraic group $G$ on $\Y$ is a rational map $\Phi\colon G \times \Y
  \dashrightarrow \Y$
that satisfies the following two properties:
\begin{enumerate}
\item $\Phi(e,p)=p$, $ \forall p\in \Y$, where $e$ is the identity in $G$, and 
 \item $\Phi(h, \Phi(g, p))=\Phi(h g, p)\),  for all $h,g\in G$ and $p\in \Y$, such that both sides are defined.
 \end{enumerate}
If the domain of $\Phi$ is all of $G \times \Y$ then $\Phi$ is a morphism and the action is called {\it  regular}.
\end{definition}

From now on, when the word ``action'' is used without an adjective, a rational action is assumed. We use the standard abbreviation $\Phi(g,p)=g\cdot p$ and state the following known definitions and results  used in our paper. %%%
\begin{definition}
For an action of $G$ on a variety $\Y$ and a point $p\in \Y$,  the  \textit{stabilizer of  $p$}  is the set
$$G_p=\{g\in G\ |\ g\cdot p=p\},$$
while the \emph{orbit of $p$} is the set
$$Gp=\{q\in \Y\ |\ \exists g\in G, g\cdot p=q\},$$
\end{definition}

We recall some basic properties of algebraic group actions. 

\begin{proposition}  \label{action-basics} Let $G$ be an algebraic group acting on an affine (or projective) variety $\Y$.
For any $p\in \Y$, the stabilizer $G_p$ is a closed algebraic subgroup of $G$.
The orbit $G p$ is a quasi-affine (or quasi-projective) variety  and 
 $$\dim G p=\dim G-\dim G_p.$$
 If $\Y$ is irreducible then the set of  all points whose orbit dimension  is less than maximal (equivalently the dimension of the stabilizer group is greater than minimum)  lies in a closed, proper subset of $\Y$.
 Finally, if $G$ is irreducible, then for all $p\in \Y$ the closure of the orbit $\overline{G p}$
 is irreducible.\end{proposition}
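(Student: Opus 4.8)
The plan is to treat the four assertions in turn, reducing each to a standard theorem of algebraic geometry once the rational action has been made regular along the relevant orbit. The one genuine technical point, which I would address first, is that $\Phi$ is only a rational map: for a fixed $p\in\Y$ the orbit map $\phi_p\colon g\mapsto g\cdot p$ is a priori defined only on a dense open $U_p\subset G$. Using property (2) of the action one checks that for $g_0\in U_p$ the translate $g\mapsto \phi_p(g g_0^{-1})$ agrees with $\phi_{g_0\cdot p}$ wherever both are defined, and covering $G$ by the translates $U_p g_0$ shows that $\phi_p$ extends to a morphism on all of $G$. This is the standard regularization of a rational group action, and once $\phi_p$ is a genuine morphism the remaining arguments become topological.

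For the first assertion, the subgroup axioms for $G_p=\{g : g\cdot p=p\}$ follow directly from properties (1) and (2): if $g,h\in G_p$ then $gh\cdot p = g\cdot(h\cdot p)=g\cdot p=p$ and $g^{-1}\cdot p = p$. Closedness is then immediate, since $G_p=\phi_p^{-1}(p)$ is the preimage of the closed point $\{p\}$ under a morphism.

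For the second assertion, $Gp=\phi_p(G)$ is the image of a variety under a morphism, hence a constructible set by Chevalley's theorem; combined with the standard fact that orbits are locally closed (open in their closure), this makes $Gp$ a quasi-affine (resp.\ quasi-projective) variety. The dimension formula is the orbit--stabilizer theorem: the fibers of $\phi_p\colon G\to Gp$ are exactly the cosets $g G_p$, all of dimension $\dim G_p$, so the fiber-dimension theorem yields $\dim Gp=\dim G-\dim G_p$.

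For the third assertion, I would pass to the incidence set $S=\{(g,p)\in G\times\Y : g\cdot p=p\}$, which is closed in the locus where $\Phi$ is regular, and consider the projection $\pi\colon S\to\Y$ with fiber $G_p$ over $p$. Upper semicontinuity of fiber dimension makes $p\mapsto\dim G_p$ upper semicontinuous, so with $m=\min_p\dim G_p$ the set $\{p:\dim G_p>m\}$ is closed; by the dimension formula this is precisely the locus where $\dim Gp$ is below its maximum, and since $\{p:\dim G_p=m\}$ is a nonempty open subset of the irreducible $\Y$, this exceptional locus is a proper closed subset. Finally, if $G$ is irreducible then $Gp=\phi_p(G)$ is the image of an irreducible variety under a morphism and hence irreducible, and the closure of an irreducible set is irreducible, establishing the fourth assertion. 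The main obstacle throughout is the initial regularization step: once the orbit maps are confirmed to be morphisms, every claim reduces to a direct application of Chevalley's theorem, the fiber-dimension/orbit--stabilizer theorem, and semicontinuity of fiber dimension.
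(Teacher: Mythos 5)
The paper does not actually prove this proposition; it is recalled as a package of standard facts with a pointer to \cite{vinberg89}, so the only question is whether your argument is sound. It is not, and the failure is precisely at the step you yourself flag as the crux. For a \emph{rational} action the orbit map $\phi_p\colon g\mapsto g\cdot p$ does not in general extend to a morphism on all of $G$. The paper's own setting already refutes it: take $G=\PGL(3)$ acting on $\C^2$ by \eqref{a-act} and $p=(0,0)$. Then $\phi_p(A)=(a_{10}/a_{00},\,a_{20}/a_{00})$ (the first column of $A$ dehomogenized), which admits no continuation as a morphism into $\C^2$ across $\{a_{00}=0\}$, since the ``missing value'' lies on the line at infinity. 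Your covering argument does not repair this: the correct identity is $\phi_p(g)=\phi_{g_0\cdot p}(gg_0^{-1})$ (not the direction you wrote), so it extends $\phi_p$ only over the translates $U_{g_0\cdot p}\,g_0$; and in the example above one computes $U_{g_0\cdot p}\,g_0=U_p$ for every $g_0\in U_p$, so the union of all these translates is $U_p$ again and nothing is gained.

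The downstream steps inherit the gap once $\phi_p$ is only defined on the open set $U_p$: the fiber of $\phi_p|_{U_p}$ over $g\cdot p$ is $gG_p\cap U_p$ rather than a full coset (and even the inclusion of that fiber into $gG_p$ uses property (2), which is only guaranteed where both sides are defined); $\phi_p^{-1}(p)$ is a priori only locally closed in $G$, so closedness of $G_p$ needs the extra remark that a constructible subgroup of an algebraic group is closed; and your incidence set $S$ is closed only in the regular locus of $\Phi$, so the semicontinuity argument should be run along the section $\{e\}\times\Y$, which does lie in that locus. All of this is fixable, but not by the translation trick. The honest routes are: (i) prove the proposition for \emph{regular} actions, where your outline is the standard and correct one, and then reduce the rational case to it --- either by Weil's regularization theorem, or, for the actions this paper actually uses, by observing that the rational action on $\C^2$ is the restriction of the regular action of $G\subset\PGL(3)$ on $\C\PP^2$, so that orbits, stabilizers and their dimensions transfer from the projective picture; or (ii) work throughout with $U_p$ and its image and patch the three issues above by hand. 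Either way, the ``regularization step'' requires a genuine theorem, not the argument you give.
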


\begin{definition}A rational function $K$ on $\Y$ is  $G$-invariant if
$$ K(g\cdot p)=K(p), \text{ whenever both sides are defined}.$$
\end{definition}
%%%
The set of all rational  $G$-invariant functions is denoted by $\C(\Y)^G$. It is easy to see that it is a subfield of  the field $\C(\Y)$ of all rational functions on $\Y$.
%%%
\begin{definition}A subset $\I\subset \C(\Y)^G$ is called {\it  separating}  if there exists a nonempty open subset $W\subset \Y$ such that  for all $p,q\in W$, 
$$q\in Gp \ \Leftrightarrow \ K(p)=K(q)\text{ for all } K\in \I.$$ 
The  set $W$ is called a  {\it  domain of separation} for $\I$.
\end{definition}
Due to the Noetherian property, there exists a {\it  maximal} (with respect to inclusions) {\it  domain of separation}. It is not difficult to see that a maximal domain of separation  is a union of orbits, and therefore is a $G$-invariant set.
%%%%

In the following proposition, we summarize several important and non-trivial results about the structure of  $ \C(\Y)^G$.
See \cite{algeom4} or \cite{vinberg89} for details. 
%%%
\newpage
\begin{proposition}\hfill\label{inv-field-prop}
\begin{enumerate} 
\item The field  $\C(\Y)^G$ is finitely generated over $ \C$.
\item A subset $\I\subset \C(\Y)^G$ is generating if and only if it is separating.\footnote{In \cite{vinberg89}, this result is attributed to Rosenlicht.}
\item The transcendental degree of  $\C(\Y)^G$  equals to $\displaystyle{\dim \Y- \max_{p\in \Y}\dim Gp}$.
\item If  the field $\C(\Y)$ is rational\footnote{i.e. isomorphic to a field of rational functions of a finite number of independent variables.} and the  transcendental degree of  $\C(\Y)^G$  over  $ \C$ equals to 1 or 2, then  $\C(\Y)^G$  is rational over $\C$.\footnote{In  \cite{vinberg89}, this result is attributed to L\"uroth and Castelnuovo.}
\end{enumerate}
\end{proposition}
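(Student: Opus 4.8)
The statement collects four classical facts, so rather than a single argument I would treat each part separately, reducing it to a standard theorem and following \cite{vinberg89} and \cite{algeom4}.

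For (1) I would argue purely field-theoretically, using the $G$-action only through the already-established fact that $\C(\Y)^G$ is a field. Since $\Y$ is a variety over $\C$, its function field $\C(\Y)$ is finitely generated over $\C$. Now $\C(\Y)^G$ is an intermediate field $\C \subseteq \C(\Y)^G \subseteq \C(\Y)$, and a standard theorem in field theory guarantees that any intermediate field of a finitely generated field extension is itself finitely generated over the base. This gives (1) with no further input.

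For (2) and (3) the key tool is Rosenlicht's construction of a rational quotient. The plan is to produce a nonempty $G$-invariant open subset $U \subseteq \Y$ together with a dominant morphism $\pi\colon U \to Z$ onto a variety $Z$ whose generic fibers are exactly the $G$-orbits in $U$ and which satisfies $\pi^{*}\C(Z) = \C(\Y)^G$. Granting this, (3) is immediate: $\operatorname{trdeg}_\C \C(\Y)^G = \dim Z = \dim \Y - \dim(\text{generic orbit}) = \dim \Y - \max_{p} \dim Gp$, where the orbit-dimension count invokes Proposition~\ref{action-basics}. For (2), the quotient $\pi$ matches subsets $\I \subseteq \C(\Y)^G$ that generate the field with subsets that separate the fibers of $\pi$ on a dense open set, i.e.\ separate orbits. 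The easy direction (generating $\Rightarrow$ separating) follows because the full field $\C(\Y)^G$ already separates generic orbits via $\pi$, and any invariant distinguishing two orbits is a rational function of a generating set, so the generating set distinguishes them as well. The reverse direction (separating $\Rightarrow$ generating) uses that a proper subfield $\C(\I) \subsetneq \C(\Y)^G$ corresponds to a factorization of $\pi$ through a quotient whose generic fibers strictly contain the orbits, so some orbits are merged, contradicting separation.

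For (4) I would split on the transcendence degree. When it equals $1$, the hypothesis that $\C(\Y)$ is rational lets me embed $\C(\Y)^G$ inside a rational function field $\C(t)$ in one variable; L\"uroth's theorem then gives that $\C(\Y)^G$ is itself rational. When it equals $2$, the same setup exhibits $\C(\Y)^G$ as a transcendence-degree-$2$ subfield of a rational function field, i.e.\ as the function field of a unirational surface, and Castelnuovo's rationality criterion yields rationality. I expect (4), and specifically the Castelnuovo case, to be the genuine obstacle: unlike L\"uroth's theorem it is a deep result in the theory of surfaces and relies essentially on $\operatorname{char}\C = 0$ (it fails in positive characteristic), so here I would invoke the theorem as cited rather than attempt a self-contained proof.
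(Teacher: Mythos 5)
The paper does not actually prove this proposition: it is stated as a summary of known results, with the reader referred to \cite{vinberg89} and \cite{algeom4} for proofs (the footnotes attribute part 2 to Rosenlicht and part 4 to L\"uroth and Castelnuovo). Your sketch is a faithful reconstruction of exactly the arguments those references use: part 1 via the theorem that an intermediate field of a finitely generated field extension is finitely generated, parts 2 and 3 via Rosenlicht's rational quotient $\pi\colon U\to Z$ with $\pi^*\C(Z)=\C(\Y)^G$ and orbit fibers, and part 4 by invoking L\"uroth and Castelnuovo. The logic is sound; I would only flag two small imprecisions. In part 4 you say you ``embed $\C(\Y)^G$ inside a rational function field $\C(t)$ in one variable''; what you actually get is an embedding into $\C(t_1,\dots,t_n)$, and in the transcendence-degree-one case you must still reduce to the classical L\"uroth statement (e.g.\ by restricting the dominant map $\PP^n\dashrightarrow C$ to a general line, or by quoting the unirationality-implies-rationality form of L\"uroth for curves). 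In part 2, the direction ``separating $\Rightarrow$ generating'' in the algebraic-extension case needs the fact that a dominant, generically finite map of degree greater than one has generic fibers with more than one point, which uses separability and hence characteristic zero --- worth making explicit, since the paper's Remark~\ref{rem-sep-gen-real} shows the equivalence genuinely fails over $\R$. Neither point is a gap in the approach; both are routine to fill in, and citing Castelnuovo rather than proving it is exactly what the paper itself does.
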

%%%
\begin{remark}\label{rem-sep-gen-real} It is worthwhile mentioning that the second part of the proposition is not valid over real numbers.  For example, the field of rational invariants for the action of the group $\R^*$ (non-zero real  numbers under multiplication) on $\R^2$ defined by $(x,y)\mapsto(\lambda^2 x,\lambda^2 y)$ is generated by $K=\frac{x}{y}$, but $K$ is not separating. Conversely,  for the translation action of $\R$  on $\R^2$ defined by $(x,y)\mapsto( x+a,y)$, the invariant $K=y^3$ is separating but not generating.
\end{remark}

%%%% %%
\subsection{Equivalence classes and symmetries of algebraic curves }\label{Sec:Curves}
%%%%%%
We now restrict our attention  to regular actions of  algebraic groups on the complex projective plane $\C\PP^2$. Such an action  induces a homomorphism from $G$ to $Aut(\C\PP^2)=\PGL(3)$, see \cite{Ha77}. Thus we view an algebraic group $G$ acting on $\C\PP^2$ as a closed subgroup of the projective linear group $\PGL(3)$\footnote{From now on we will refer to $\PGL(3)$ as the projective group.}. An element $g\in G$  can be represented by a $3\times 3$ non-singular complex matrix $A_g$, which is defined up to scaling.  
 We use homogeneous coordinates $[x_0:x_1:x_2]$ to represent a point $\pp\in  \C\PP^2$. Then the action of $G$  on $\C\PP^2$ is  defined by:
\beq\label{p-act} g\cdot \p=[\phi_0(g,\pp): \phi_1(g,\pp): \phi_2(g,\pp)], \text{ where }
\left[
\begin{array}{c}
\phi_0(g,\pp)  \\
\phi_1(g,\pp)  \\
\phi_2(g,\pp)
\end{array}
\right]
=
A_g\, 
\left[
\begin{array}{c}
x_0  \\
x_1  \\
x_2
\end{array}
\right].
\eeq
On $\C^2$, we use coordinates $(x,y)$. For an affine point $p=(x,y)\in \C^2$, we  use an abbreviation  $[1:p]=[1:x:y]$ to denote the corresponding projective point. The action \eqref{p-act} induces  a  rational action  $\Phi:G\times \C^2\dasharrow\C^2$ given by 
\beq\label{a-act}
g\cdot p= \left(\frac{\phi_1(g,[1:p])}{\phi_0(g,[1:p])}, \frac{\phi_2(g,[1:p])}{\phi_0(g,[1:p])}\right).
\eeq

We are interested  in the characterization of the equivalence classes of algebraic curves under this action. 
Given a curve $\Chi\subset \C^2$, let $g\cdot \Chi$ denote the the image of $\Chi$ under $g$, namely ${\Phi(g,X)}$.
As this is a rational action, the image may not be an algebraic curve, and so we  will consider its Zariski closure $\overline{g\cdot X}$. 
\begin{definition}
We say that an  algebraic curve $\Chi\subset \C^2$ is \emph{$G$-equivalent} to an  algebraic curve $Y \subset \C^2$ if there exists $g\in G$ such that $\Chi=\overline{g\cdot Y}$.
\end{definition}
Clearly $G$-equivalence satisfies all properties of an equivalence relation, and we  use the notation $X\underset G{\cong} Y$  to denote the $G$-equivalence of curves $X$ and $Y$.
Elements $g\in G$ defining self-equivalences of $\Chi$ are called symmetries of $\Chi$ in $G$.  It is not difficult to show that the set of all symmetries
$$
\SymG=\{g\in G\  |\ \Chi=\overline{g\cdot \Chi} \}
$$
form a closed algebraic subgroup of $G$,   called the \emph{symmetry group} of $\Chi$ with respect to $G$.

Note that the symmetries of $\Chi$ that fix every point of the curve form a normal subgroup of $\SymG$, called the \emph{stabilizer group} of $\Chi$ with respect to $G$:
$$
\StabG=\displaystyle{\bigcap_{p\in\Chi}} G_p.
$$
We show that for a natural class of curves,  $\StabG$ only consists of the identity element. 
\begin{proposition}\label{Thm:SymEff}
For an irreducible curve $\Chi\subset \C^2$ of degree greater than one, the stabilizer group $\StabG$ consists of only the identity.
\end{proposition}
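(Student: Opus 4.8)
The plan is to reduce this affine statement to a statement about fixed points of a projective transformation, and then exploit the fact that the fixed locus of a nonidentity element of $\PGL(3)$ cannot contain an irreducible curve of degree greater than one.

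First I would take an arbitrary $g\in\StabG$, represented by a matrix $A_g$ that is non-scalar whenever $g\neq e$, and argue that $g$ fixes every point of the projective closure $\overline{\Chi}\subset\CP$. By definition $g$ fixes every affine point $p=(x,y)\in\Chi$ at which the rational action \eqref{a-act} is defined; unwinding \eqref{a-act}, the equality $g\cdot p=p$ forces $\phi_1(g,[1:p])=x\,\phi_0(g,[1:p])$ and $\phi_2(g,[1:p])=y\,\phi_0(g,[1:p])$, i.e. $g\cdot[1:p]=[1:p]$ as points of $\CP$. The locus $\{\phi_0(g,[1:p])=0\}$ is the zero set of a nonzero linear form (the first row of $A_g$), hence a line, and it meets $\overline{\Chi}$ in at most finitely many points, since $\overline{\Chi}$ is an irreducible curve of degree $>1$ and so is not contained in any line. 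Thus $g$ fixes a Zariski-dense subset of $\overline{\Chi}$, and because the fixed-point locus of $g$ in $\CP$ is closed, it contains all of $\overline{\Chi}$.

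Next I would analyze this fixed-point locus directly through the eigenstructure of $A_g$. A point $[\bv]\in\CP$ is fixed by $g$ exactly when $\bv$ is an eigenvector of $A_g$, so the fixed locus is the projectivization of the union of the eigenspaces of $A_g$. If $g\neq e$ then $A_g$ is not scalar, and a short case analysis on the eigenvalue/eigenvector structure shows that this union has a one-dimensional component only when some eigenspace is two-dimensional, which projectivizes to a single line $L$; there can be at most one such line, as two distinct two-dimensional eigenspaces in $\C^3$ would share an eigenvector belonging to two different eigenvalues. Any remaining eigenvectors lie outside $L$, carry a distinct eigenvalue, and contribute only isolated points. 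Hence the fixed locus of any $g\neq e$ is contained in $L\cup\{\text{finitely many points}\}$. Since $\overline{\Chi}$ is irreducible of degree $>1$ it is not contained in $L$ (otherwise it would be a degree-one curve), and removing a finite set from the irreducible curve $\overline{\Chi}$ still leaves a dense subset that would have to lie in $L$; so $\overline{\Chi}$ cannot be contained in $L\cup\{\text{finitely many points}\}$. This contradicts the conclusion of the previous step, forcing $g=e$ and hence $\StabG=\{e\}$.

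I expect the main obstacle to be making the eigenspace case analysis clean and complete, covering the diagonalizable, single-Jordan-block, and partially diagonalizable cases uniformly, together with being careful about the domain of the rational action when passing from the affine to the projective picture; both steps are routine but easy to state sloppily.
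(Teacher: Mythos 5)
Your proof is correct and follows essentially the same route as the paper's: both identify the fixed points of $g$ with eigenvectors of $A_g$ and observe that for a non-scalar matrix the fixed locus is at most a line together with finitely many points, which cannot contain an irreducible curve of degree greater than one. The only difference is that you carry out the argument in $\C\PP^2$ while the paper works with the affine slice $\{x_0=1\}$, which is an immaterial variation.
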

%%%%
\begin{proof}
For $g\in G$ and let  $A_g\in \mathcal{GL}(3)$ be any of its representatives. 
Then a point $p\in \C^2$ is fixed by $g$ if and only if 
$(1,p)$ is an eigenvector of $A_g$. 
Therefore, the set $\C^2_g$ of points fixed by $g$ is the intersection of the affine plane $\{x_0=1\}$  with the union of the eigenspaces of the matrix $A_g$. 
There are three possibilities: (1)  $A_g$ has three linearly independent eigenvectors, then $\C^2_g$ consists of at most\footnote{``At most'' because an eigenspace may be parallel to the  $\{x_0=1\}$ plane.}  three distinct points, (2)   $A_g$ has an eigenspace of dimension 2 and   an eigenspace of dimension 1, then  $\C^2_g$ consists of at most a line and a point,  (3)  $A_g$ has an eigenspace of dimension 3, then $\C^2_g=\C^2$.

If  $g\in\StabG$, then  $\Chi\subset \C^2_g$. Since $\Chi$ is irreducible of degree $>1$, it follows that $\C^2_g=\C^2$.
This implies that $A_g$ is a scalar multiple of the identity matrix and $g$ is the identity element of $\PGL(3)$. 
\end{proof}
%%%%

We finish this section by proving two useful propositions concerning  the orbits of  $\Sym(\Chi,G)$.
%%%%
\begin{proposition}\label{pr:InfOrbit}
If $\Chi$ is irreducible of degree greater than one, then $|\Sym(\Chi,G)|$ is infinite if and only if  there exists a point $p\in \Chi$  whose orbit under $\Sym(\Chi,G)$ is dense in $X$.  
\end{proposition}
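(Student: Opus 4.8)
The plan is to set $H=\Sym(\Chi,G)$ and analyze the orbits of $H$ on the projective closure $\bar\Chi\subset\CP$. Since each $h\in H$ is a projective transformation mapping $\Chi$ to $\overline{h\cdot\Chi}=\Chi$, it maps $\bar\Chi$ to $\bar\Chi$, so $H$ acts regularly on the irreducible projective curve $\bar\Chi$. Because $\bar\Chi\setminus\Chi$ is finite (a curve of degree $d\geq 1$ meets the line at infinity in finitely many points) and $\Chi$ is dense in $\bar\Chi$, an $H$-orbit that is dense in $\bar\Chi$ meets $\C^2$ in a set dense in $\Chi$; conversely, the affine orbit $H\cdot p$ of a point $p\in\Chi$ is exactly $Hp\cap\C^2$. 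Thus it suffices to decide when some $p\in\Chi$ has an $H$-orbit that is dense in $\bar\Chi$.

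For the easy direction, I would observe that if $H\cdot p$ is dense in the irreducible curve $\Chi$, then $H\cdot p$ must be infinite, since a finite subset of $\Chi$ is closed and $\Chi$ is infinite. As $H\cdot p$ is the image of $H$ under the map $h\mapsto h\cdot p$, we have $|H|\geq|H\cdot p|=\infty$, so $H$ is infinite.

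For the converse, suppose $|H|$ is infinite. Then $\dim H\geq 1$, since a zero-dimensional algebraic group over $\C$ is finite. Let $m=\max_{p\in\bar\Chi}\dim Hp$. By Proposition~\ref{action-basics}, applied to the irreducible variety $\bar\Chi$, the locus where $\dim Hp<m$ is a proper closed subset, so a generic point has $\dim Hp=m$, and since $\dim\bar\Chi=1$ we have $m\in\{0,1\}$. If $m=1$, I would pick $p$ in the nonempty open set $U\cap\Chi$, where $U=\{p:\dim Hp=1\}$; then $\overline{Hp}$ is a one-dimensional closed subset of the irreducible one-dimensional curve $\bar\Chi$, hence $\overline{Hp}=\bar\Chi$, which gives the desired dense orbit by the reduction above.

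It remains to exclude the case $m=0$, which I expect to be the main point. If $m=0$, then every $H$-orbit on $\bar\Chi$ is finite, and the same holds for the identity component $H^0$, which is connected, hence irreducible, of dimension $\dim H\geq 1$. For each $p$, the orbit formula gives $\dim(H^0)_p=\dim H^0-\dim(H^0 p)=\dim H^0$, so the closed subgroup $(H^0)_p$ has full dimension in the irreducible group $H^0$ and therefore equals $H^0$. Thus every element of $H^0$ fixes every $p\in\Chi$, so $H^0\subseteq\bigcap_{p\in\Chi}G_p=\Stab(\Chi,G)$. By Proposition~\ref{Thm:SymEff} the stabilizer group is trivial, forcing $H^0=\{e\}$ and contradicting $\dim H^0\geq 1$. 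Hence $m=1$, which completes the proof.
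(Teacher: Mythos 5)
Your proof is correct and follows essentially the same route as the paper's: both directions rest on the dichotomy that orbits on a one-dimensional irreducible curve have dimension $0$ or $1$, and the case of all orbits finite is excluded by showing the identity component $H^0$ (of positive dimension) would lie in $\Stab(\Chi,G)$, contradicting Proposition~\ref{Thm:SymEff}. Your variations --- passing to the projective closure to make the action regular, proving the easy direction by cardinality rather than via $\dim Hp \leq \dim H$, and deducing $H^0 \subseteq \Stab(\Chi,G)$ from the stabilizer-dimension formula instead of observing that a zero-dimensional irreducible orbit is a single point --- are cosmetic refinements of the same argument.
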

\begin{proof} Let   $H= \Sym(\Chi,G)$. This is an algebraic group acting on $\Chi$.

($\Rightarrow$) Assume $|H|$ is infinite. Then since $H$ is algebraic, $\dim  H>0$.  Let $H^0$ denote the connected component of $H$ containing $e$. By   \cite[Prop. 2.2.1]{springer89}, this is a closed normal subgroup of $H$ of finite index and so  $\dim  H^0>0$. By Proposition~\ref{action-basics}, for any $p\in X$ the orbit $H^0p$ is  an irreducible quasi-affine subvariety of $\Chi$. Since  $\dim \Chi=1$, the dimension of $H^0p$ is either zero or one.
If for all $p\in X$, $\dim H^0p=0$, then  $H^0p=\{p\}$ for all $p\in X$. In this case, $\StabG$ contains $H^0$, contradicting the statement of Proposition~\ref{Thm:SymEff}. Therefore, there exists $p\in \Chi$ such that $\dim H^0p=1$. Since $X$ is irreducible of dimension 1,  this implies  
$\overline {H^0p}=\Chi$.

($\Leftarrow$) Assume there exists a point $p\in \Chi$  whose orbit under $H$ is dense in $X$. Then $\dim Hp=1$. By Proposition~\ref{action-basics}, $\dim Hp\leq\dim H$. Therefore $\dim H>0$ and so $|H|$ is infinite. 
\end{proof}

%%%
%%%%
\begin{proposition}\label{pr:FOrbit}
If $\Chi$ is irreducible of degree greater than one and $|\Sym(\Chi,G)|=n<\infty$, then for all but finitely many points $p\in \Chi$  the orbit under $\Sym(\Chi,G)$ consists of exactly $n$ distinct points.  
\end{proposition}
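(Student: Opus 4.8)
The plan is to reduce the statement to a fixed-point count. Set $H=\SymG$, which by hypothesis is a finite subgroup of $\PGL(3)$ of order $n$ acting on $\Chi$. By the orbit--stabilizer relation, for a point $p\in\Chi$ the orbit $Hp$ consists of exactly $n=|H|$ distinct points precisely when the stabilizer $H_p=\{h\in H\mid h\cdot p=p\}$ is trivial. Thus it suffices to show that only finitely many $p\in\Chi$ are fixed by some non-identity element of $H$.

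Since $H$ is finite, it has only finitely many non-identity elements $h_1,\dots,h_{n-1}$. For each such $h_i$ I would consider its fixed-point locus on the curve, $\Chi\cap\C^2_{h_i}$, where $\C^2_{h_i}=\{p\in\C^2\mid h_i\cdot p=p\}$ is exactly the set analyzed in the proof of Proposition~\ref{Thm:SymEff}, and argue that each such locus is finite. Indeed, $\Chi\cap\C^2_{h_i}$ is a Zariski-closed subset of $\Chi$; if it were all of $\Chi$, then $h_i$ would fix every point of $\Chi$, placing $h_i$ in $\StabG$, which by Proposition~\ref{Thm:SymEff} is trivial---contradicting $h_i\neq e$. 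Hence $\Chi\cap\C^2_{h_i}$ is a proper closed subset of the irreducible curve $\Chi$, and is therefore zero-dimensional, i.e.\ finite. (Alternatively, one reads finiteness directly from the description of $\C^2_{h_i}$ in Proposition~\ref{Thm:SymEff} as at most a line together with a point: since $\deg\Chi>1$, the irreducible curve $\Chi$ cannot equal that line, so B\'ezout bounds $\Chi\cap\C^2_{h_i}$.)

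Finally, I would take the union $\bigcup_{i=1}^{n-1}(\Chi\cap\C^2_{h_i})$, which is finite as a finite union of finite sets. Every point $p\in\Chi$ lying outside this exceptional finite set has trivial stabilizer $H_p=\{e\}$, and hence its orbit $Hp$ consists of exactly $n$ distinct points, which is precisely the claim.

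The argument is short and its only genuine content is the finiteness of each fixed-point locus. The main point to get right is the reduction via Proposition~\ref{Thm:SymEff}: one must invoke the triviality of $\StabG$ to rule out the degenerate possibility that some non-identity symmetry fixes the whole curve. Beyond this bookkeeping I do not anticipate any real obstacle.
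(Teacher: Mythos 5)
Your argument is essentially the paper's own proof: the paper also sets $H=\SymG$, defines for each $g\in H$ the fixed locus $X_g=\{p\in X \mid g\cdot p=p\}$, cites the proof of Proposition~\ref{Thm:SymEff} to conclude $X_g$ is finite for $g\neq e$, takes the finite union over $H$, and concludes via $|Hp|=|H|/|H_p|$ outside that union. Your finiteness argument for each fixed locus (proper closed subset of an irreducible curve, with $\StabG$ trivial ruling out the degenerate case) is the same reduction.

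There is one small omission. The action of $G$ on $\C^2$ given by \eqref{a-act} is only \emph{rational}: $g\cdot p$ is undefined at points where the denominator $\phi_0(g,[1:p])$ vanishes. The orbit--stabilizer count $|Hp|=|H|/|H_p|=n$ is only valid at points $p$ where $h\cdot p$ is defined for \emph{every} $h\in H$; at a point where some $h\cdot p$ is undefined, the orbit (as defined in the paper) can have fewer than $n$ elements even though $H_p$ is trivial. The paper handles this by also excluding, for each $g\in H$, the set $E_g=\{p\in X \mid g\cdot p \text{ is undefined}\}$, which is again empty or finite (it is the intersection of $X$ with a line, and $X$ is irreducible of degree $>1$), and adding these to the exceptional set. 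You should enlarge your exceptional set $\bigcup_i (\Chi\cap\C^2_{h_i})$ by $\bigcup_{h\in H}E_h$; with that adjustment your proof is complete and coincides with the paper's.
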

\begin{proof} Let   $H= \Sym(\Chi,G)$. For $g\in H$, define $X_g=\{p\in X\,|\,   g\cdot p =p\}$.  From the proof of Proposition~\ref{Thm:SymEff} it follows that if $g\neq e$, then  $X_g$ is either empty or finite.  Consider the set $E_g=\{p\in X\,|\, g\cdot p \text{ is undefined}\}$, which is also empty or finite.
Since $|H|$ is finite, the set
$\Delta=\cup_{g\in H} (E_g\cup X_g)$ is empty or finite. For all $p\in X\backslash \Delta$, $g\cdot p$ is defined for all $g\in H$  and the stabilizer $H_p=\{e\}$.  Then 
$|Hp|=|H|/|H_p|=n.$
\end{proof}
It is  important to note that  under   the action $G\subset  \PGL(3)$ described by \eqref{a-act} the degree and the irreducibility property are preserved. From now on and throughout the paper  we will make the following assumptions:
\begin{assum} \label{assum1} \hfill
\begin{enumerate}
\item  A group $G$ is a closed subgroup of $\PGL(3)$  with $\dim G>0$.
\item  The rational action of $G$ on $\C^2$ is defined by \eqref{p-act} and \eqref{a-act}.
\item $X\subset \C^2$ is an irreducible algebraic curve of degree greater than one.  
\end{enumerate}
\end{assum}

%%%%%
\subsection{Classifying differential invariants}\label{Sec:Inv}
%%%

To  define differential invariants, we  prolong the action of $G$ to the jet space $J^n$ of planar curves. For our purposes, we can ignore the points where the curve has  vertical tangent and identify 
$J^n$ with $\C^{n+2}$. The coordinate functions on  $J^n$ are denoted by  $(x,y,y^{(1)},\hdots , y^{(n)})$.
Although formally,  $y^{(k)}$ is  viewed as  an independent coordinate function, we define the prolongation formulas keeping in mind  that 
  $y^{(k)}$ is the ``place holder'' for the $k$-th derivative  of $y$ with respect to $x$.
  
\begin{definition}\label{def-prolong} Let $G$  act on  $\C^2$. For $g\in G$, let  $(\overline{x},\overline{y})=g\cdot (x,y)$.    \emph{The prolongation of the $G$-action from $\C^2$ to $J^n$} is a rational  action defined by
$$
g\cdot (x,y,y^{(1)},\hdots , y^{(n)})=(\overline{x},\overline{y},\overline{y}^{(1)},\hdots , \overline{y}^{(n)})
$$
where
$$
\overline{y}^{(1)}= \frac{\frac{d}{dx}\left[ \overline{y}(g,x,y)\right]}{\frac{d}{dx}\left[ \overline{x}(g,x,y)\right]}
\text {\,\, \ and\,\, \ }
 \overline{y}^{(k+1)}= \frac{\frac{d}{dx}\left[ \overline{y}^{(k)}(g,x,y,y^{(1)},\hdots,{ y^{(k)}})\right]}{\frac{d}{dx}\left[ \overline{x}(g,x,y)\right]} 
 \text{  for } k=1,\hdots,n-1. $$
The operator $\frac{d}{dx}$ is the {\it  total derivative operator}. This is the unique $\C$-linear operator mapping $C(J^n)\to C(J^{n+1})$ for $n\geq 0$ satisfying the product rule, $\frac{d}{dx}(x)=1$, and $\frac{d}{dx} (\jy k)=\jy{k+1}$ for $k\geq 0$.
Here we use the convention that $y=\jy 0$ and coordinate functions of $g$ are considered to be constant with respect to $x$. 
\end{definition}
%%%
\begin{definition} %
A rational function $K(x,y,y^{(1)},\hdots,{ y^{(n)}})$ on $J^n$ is called a {\it rational   differential function}. The {\it  differential order} of $K$ is the maximal $k$, such that $K$ explicitly depends on $\jy k$:
$$\ord(K)=\max_{i}\left\{i\,\Big |\, \pd{K}{\jy i}\neq 0\right\}.$$  
If $K$ is invariant under the prolonged  action it is called a  {\it rational differential invariant}.

\end{definition}
%%%
Note that if $\ord(K)=k$, then $K\in \C(J^n$) for all $n\geq k$.  In Theorem~\ref{th-gen}, we show that the  field  $\C(J^r)^G$ of rational invariants of the order at most $r=\dim G$  has a very simple structure. We start by formulating (in our context) an important result originally due to Ovsiannikov \cite{ovsiannikov78} (see also \cite[Theorem 5.11]{olver:purple}).\footnote{We stated this result under Assumptions~\ref{assum1} given at the beginning of the section. For general actions of algebraic groups on algebraic varieties one needs to assume local effectiveness of the action (the set of elements  in $G$ with a trivial action is finite). The theorem was originally stated  for Lie groups acting on  smooth (non-algebraic) real manifolds, and in this setting, as was shown in \cite{olver00},  a stronger assumption of local effectiveness on all open subsets is required. The proof remains valid over $\C$.}  

%%%
\begin{proposition}\label{prop-ovs}
  Let a group $G$ of dimension $r$ act on $\C^2$. Then there is  $k\geq 0$ such that, for all $n\geq k$, the maximal orbit dimension  of the prolonged action on $J^n$  is $r$.
\end{proposition}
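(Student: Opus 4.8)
The plan is to show that the maximal orbit dimension of the prolonged action stabilizes at the value $r=\dim G$, using two separate monotonicity and boundedness arguments combined with a careful analysis of the stabilizer of a jet.

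\textbf{Upper bound and monotonicity.} First I would observe that for every $n$ the projection $\pi_n^{n+1}\colon J^{n+1}\to J^n$ forgetting the top coordinate $\jy{n+1}$ is $G$-equivariant with respect to the prolonged actions, since the prolongation formulas of Definition~\ref{def-prolong} express $\overline{y}^{(k)}$ in terms of coordinates up to order $k$ only. Consequently the image of a prolonged orbit in $J^{n+1}$ is a prolonged orbit in $J^n$, so $\dim G\cdot z^{(n+1)} \geq \dim G\cdot \pi_n^{n+1}(z^{(n+1)})$ for a generic jet $z^{(n+1)}$; this gives that the maximal orbit dimension is \emph{nondecreasing} in $n$. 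On the other hand, the prolonged action is a rational action of $G$ on $J^n$, so by Proposition~\ref{action-basics} every orbit has dimension at most $\dim G = r$. Thus the maximal orbit dimension $d_n$ is a nondecreasing sequence bounded above by $r$, and therefore stabilizes: there is $k\geq 0$ with $d_n = d_\infty \leq r$ for all $n\geq k$.

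\textbf{Identifying the stable value with $r$.} The substantive point is that $d_\infty = r$ rather than something strictly smaller. I would argue this via the stabilizer: since $\dim G\cdot z^{(n)} = r - \dim G_{z^{(n)}}$ by Proposition~\ref{action-basics}, it suffices to find, for large $n$, a jet whose stabilizer is zero-dimensional. The key mechanism is that the stabilizers are nested and decreasing under prolongation, i.e. $G_{z^{(n+1)}} \subseteq G_{z^{(n)}}$ for any lift, because fixing a higher-order jet is a stronger condition. Write $\Sigma_n = \bigcap$ of these stabilizers taken over a generic sequence of lifts; equivalently consider the subgroup of $G$ fixing the full infinite jet of a generic curve at a generic point. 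An element $g$ in every $G_{z^{(n)}}$ fixes the jet of a generic curve to all orders at the base point, hence fixes an honest analytic arc of the curve, hence (the curve being irreducible and $g$ acting algebraically) fixes the whole curve pointwise. By Proposition~\ref{Thm:SymEff} the pointwise stabilizer of such a curve is trivial, so $\bigcap_n G_{z^{(n)}} = \{e\}$. Because $G$ is an algebraic group and the $G_{z^{(n)}}$ form a descending chain of closed subgroups, the chain stabilizes (Noetherian/dimension considerations on algebraic groups), and its limit being trivial forces $\dim G_{z^{(n)}} = 0$ for all sufficiently large $n$. Hence $d_\infty = r - 0 = r$, completing the proof.

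\textbf{Main obstacle.} I expect the delicate step to be the passage from ``$g$ fixes the jet of a generic curve to all finite orders at a generic point'' to ``$g$ fixes the curve pointwise,'' together with making precise the choice of a generic sequence of jets on a single irreducible curve so that Proposition~\ref{Thm:SymEff} applies. One must ensure the base point and curve are chosen so that the formal jet actually determines a convergent (or at least Zariski-dense) piece of the curve, and that the algebraicity of the $G$-action upgrades fixing an arc to fixing the algebraic closure. A clean way to sidestep the analytic subtleties is to phrase the stabilizer stabilization purely in terms of the descending chain of algebraic groups $G_{z^{(n)}}$: such a chain must stabilize at some $k$, so $G_{z^{(k)}} = G_{z^{(n)}}$ for all $n\geq k$, and this common group fixes the jet to arbitrarily high order; one then invokes $\StabG = \{e\}$ from Proposition~\ref{Thm:SymEff} to conclude the common group is trivial. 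The remaining care is in confirming that a jet whose stabilizer is this common (now trivial) group really is achieved by a generic point of a generic curve, which is where genericity and irreducibility (Assumption~\ref{assum1}) are used.
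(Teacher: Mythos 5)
First, a point of reference: the paper does not actually prove Proposition~\ref{prop-ovs}; it quotes it as a known stabilization theorem of Ovsiannikov, citing \cite{ovsiannikov78}, \cite[Theorem 5.11]{olver:purple} and \cite{olver00}. So your attempt is measured against the literature rather than an argument in the text. Your first step is fine: the projections $J^{n+1}\to J^n$ are equivariant, so the maximal orbit dimension is nondecreasing in $n$ and bounded above by $r$, hence stabilizes. The gap is in identifying the stable value. You assert that an element $g$ fixing $j^n_X(p)$ for every $n$ ``fixes an honest analytic arc of the curve, hence fixes the whole curve pointwise,'' and you then invoke Proposition~\ref{Thm:SymEff} on the pointwise stabilizer $\Stab(X,G)$. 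That implication is false. Fixing the infinite jet of $X$ at $p$ only says that $g\cdot p=p$ and that the germ of $\overline{g\cdot X}$ at $p$ coincides with the germ of $X$ at $p$ \emph{as a set}; by irreducibility this gives $g\in \Sym(X,G)\cap G_p$, not $g\in\Stab(X,G)$. Concretely, $g_\lambda\colon (x,y)\mapsto(\lambda x,\lambda^2 y)$ preserves the parabola $X=V(y-x^2)$, fixes the origin, and fixes the entire infinite jet of $X$ there (the prolongation sends $y^{(2)}\mapsto y^{(2)}$, and every other jet coordinate of $X$ at the origin vanishes), yet $g_\lambda$ is not the identity on $X$. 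Thus $\bigcap_n G_{j^n_X(p)}=\Sym(X,G)\cap G_p$ can be positive-dimensional, and your descending-chain argument only shows that the orbit dimension of the jets of \emph{that particular curve} eventually equals $r-\dim\bigl(\Sym(X,G)\cap G_p\bigr)$, which is strictly less than $r$ for lines and conics.

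To close the gap you must exhibit, for an arbitrary closed $G\subseteq\PGL(3)$ of positive dimension, one irreducible curve $X$ of degree greater than one and one point $p$ for which $\Sym(X,G)\cap G_p$ is finite; your chain argument then correctly produces a jet in $J^n$ with $r$-dimensional orbit for all large $n$, which suffices since $r$ is always an upper bound. This existence statement is the real content of the stabilization theorem and does not come for free. In the algebraic setting it can be obtained, for instance, by taking $X$ a smooth plane quartic: it has genus $3$, so its automorphism group is finite by the Hurwitz bound, and since $\Stab(X,G)$ is trivial by Proposition~\ref{Thm:SymEff} the symmetry group $\Sym(X,G)$ embeds into $\mathrm{Aut}(X)$ and is therefore finite. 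Without some such input (or the fact, cited later in the paper from \cite{Shok94}, that a generic curve of high degree has trivial symmetry group), the argument as written is incomplete at its decisive step.
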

%%%
   
%
We need the following  lemma that immediately follows from  \cite[Prop.~5.15]{olver:purple} and the fact that two rational functions are algebraically independent if and only if at a generic point their gradients are linearly independent. This fact is not difficult to prove and for polynomial functions 
is known as the Jacobian criterion of independence. We leave its proof to the reader. 
%%%
\begin{lemma}\label{lem-dk-dh}
Assume $K_1$ and $K_2$ are two algebraically independent
rational differential invariants, 
such that  $\max\left\{\ord(K_1),\ord(K_2)\right\}=k$. Then
$$ \frac {dK_2}{dK_1}:=\frac{\frac {dK_2}{dx}}{\frac {dK_1}{dx}}$$
is a rational differential invariant of order $k+1$.
\end{lemma}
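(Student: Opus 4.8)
The plan is to establish two things separately: that $\frac{dK_2}{dK_1}$ is $G$-invariant, and that it is a rational differential function of order at most $k+1$ (the precise order being the delicate point). Both rest on a single commutation identity between the total derivative operator and the prolonged action. Writing $(\overline{x},\overline{y})=g\cdot(x,y)$ for $g\in G$, the recursion defining $\overline{y}^{(k+1)}$ in Definition~\ref{def-prolong} is built precisely so that, for every rational differential function $F$,
$$\frac{d}{dx}(F\circ g)=\Bigl(\frac{d}{dx}\overline{x}\Bigr)\cdot\Bigl(\frac{dF}{dx}\circ g\Bigr);$$
this is the content of the cited \cite[Prop.~5.15]{olver:purple}, and I would verify it by induction straight from the prolongation formulas. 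First I would apply it to an invariant $K_i$: since $K_i\circ g=K_i$, the identity collapses to $\frac{dK_i}{dx}=(\frac{d}{dx}\overline{x})\,(\frac{dK_i}{dx}\circ g)$, so each $\frac{dK_i}{dx}$ is a relative invariant with the same multiplier $(\frac{d}{dx}\overline{x})^{-1}$. This common factor cancels in the quotient, giving $(\frac{dK_2}{dK_1})\circ g=\frac{dK_2}{dK_1}$; hence $\frac{dK_2}{dK_1}$ is a rational $G$-invariant.

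For the order I would note that, because $\ord(K_i)\le k$, the total derivative
$$\frac{dK_i}{dx}=\pd{K_i}{x}+\sum_{j=0}^{k}\jy{j+1}\,\pd{K_i}{\jy{j}}=R_i+c_i\,\jy{k+1},\qquad c_i:=\pd{K_i}{\jy{k}},$$
is affine-linear in $\jy{k+1}$ with $R_i,c_i$ of order $\le k$. Thus $\frac{dK_2}{dK_1}\in\C(J^{k+1})$ and its order is at most $k+1$, with
$$\pd{}{\jy{k+1}}\Bigl(\frac{dK_2}{dK_1}\Bigr)=\frac{c_2R_1-c_1R_2}{(dK_1/dx)^2}.$$
So the order is exactly $k+1$ precisely when the numerator $N:=c_2R_1-c_1R_2$ is not identically zero, and proving this non-vanishing is the heart of the matter.

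The hard part will be exactly this non-vanishing of $N$. Expanding the two products, the $\jy{k+1}$-terms cancel and one is left with
$$N=\mu_x+\sum_{j=0}^{k-1}\jy{j+1}\,\mu_{\jy{j}},\qquad \mu_c:=\pd{K_1}{c}\,\pd{K_2}{\jy{k}}-\pd{K_2}{c}\,\pd{K_1}{\jy{k}},$$
where each $\mu_c$ is the $2\times2$ minor of the Jacobian of $(K_1,K_2)$ formed from the columns $c$ and $\jy{k}$. By the stated equivalence between algebraic independence and generic linear independence of gradients, this Jacobian has generic rank $2$, and since $\max\{\ord(K_1),\ord(K_2)\}=k$ its $\jy{k}$-column is not identically zero, so the $\mu_c$ cannot all vanish. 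When $\ord(K_1)\neq\ord(K_2)$ this settles matters immediately: one of $c_1,c_2$ is zero and $N$ collapses to a single product $c_iR_{3-i}$, which is nonzero because $\frac{dK_{3-i}}{dx}\neq0$ for the nonconstant invariant $K_{3-i}$. The genuinely delicate case is $\ord(K_1)=\ord(K_2)=k$, where $N$ is a nontrivial combination of the $\mu_c$ that could a priori cancel; here I would have to use the gradient criterion in full, together with the relative-invariance established above, to guarantee $N\not\equiv0$ in the non-degenerate situation (in the paper's applications the classifying pairs can moreover be arranged so that $\ord(K_1)<\ord(K_2)$). I expect this cancellation analysis in the equal-order case, rather than the invariance, to be the crux of the argument — it is precisely the order-raising half of \cite[Prop.~5.15]{olver:purple}.
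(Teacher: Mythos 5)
The paper offers no proof of this lemma beyond the assertion that it follows ``immediately'' from Olver's Proposition~5.15 and the Jacobian criterion, so your write-up is already more explicit than the source. The invariance half is correct and complete: the commutation identity you state is exactly the inductive consequence of Definition~\ref{def-prolong}, each $\frac{dK_i}{dx}$ is a relative invariant with multiplier $(\frac{d}{dx}\overline{x})^{-1}$, and the common factor cancels in the quotient. Your reduction of the order claim to $N=c_2R_1-c_1R_2\not\equiv 0$ is also exactly right, and your treatment of the case $\ord(K_1)\neq\ord(K_2)$ is complete. The gap you flag in the equal-order case is, however, genuine, and the route you propose for closing it cannot succeed: generic rank $2$ of the Jacobian of $(K_1,K_2)$ does \emph{not} imply $N\not\equiv 0$. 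Take $K_1=y-x\jy{1}$ and $K_2=\jy{1}$. These are algebraically (indeed functionally) independent with $\max\{\ord(K_1),\ord(K_2)\}=1$, yet $\frac{dK_1}{dx}=-x\jy{2}$ and $\frac{dK_2}{dx}=\jy{2}$, so $\frac{dK_2}{dK_1}=-1/x$, which does not have order $2$. Hence in the equal-order case the conclusion is not a consequence of algebraic independence at all; any correct proof must use that $K_1,K_2$ are invariants of a positive-dimensional group (one checks that no positive-dimensional subgroup of $\PGL(3)$ admits both functions above as joint invariants), and ``the gradient criterion in full'' is a dead end. This also shows that the paper's own one-line justification is too glib for this case.

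The cleanest repair is to prove the lemma only when the orders differ, which is all the paper actually needs. In the one invocation where the orders could a priori coincide --- the first step of the proof of Theorem~\ref{th-gen}, where $\ord(K_1)=k_1\leq k_2=\ord(K_2)$ --- the inequality is automatically strict: if two algebraically independent invariants both had the minimal order $k_1$, then $\C(J^{k_1-1})^G=\C$ would force generic orbits in $J^{k_1-1}$ to be open (Proposition~\ref{inv-field-prop}, part~3), hence generic orbits in $J^{k_1}$ to have dimension at least $k_1+1$ by equivariance of the projection $J^{k_1}\to J^{k_1-1}$, so $\C(J^{k_1})^G$ would have transcendence degree at most $1$, contradicting the independence of $K_1$ and $K_2$. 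All later invocations ($H_i=dH_{i-1}/dK_1$, and the pairs in the second paragraph of that proof) have strictly unequal orders by construction. If you want the lemma in the generality in which it is stated, you must supply a separate argument for $\ord(K_1)=\ord(K_2)$ that exploits invariance in an essential way --- for instance through the orbit-dimension stabilization of Proposition~\ref{prop-ovs} --- and you should not expect it to fall out of the Jacobian computation alone.
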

%%%%
%%%
The proof of the next theorem  invokes the line of the argument  in the proof of Theorem~5.24 in \cite{olver:purple} in combination with  Proposition~\ref{inv-field-prop} stated above.  
%%%%
\begin{theorem}\label{th-gen}  Let $\dim G=r$, then the field of $\C(J^r)^G$ of rational invariants on $J^r$ is a rational field of transcendental degree two. In other words, there exists two  rational invariants $K_1$ and $K_2$ such that 
\beq\label{eq-jr} \C(J^r)^G=\C(K_1,K_2).\eeq 
Moreover  $K_1$ and $K_2$ can be chosen so that $K_1$ is of differential order $k$, strictly  less  than $r$, and $K_2$ is of differential order $r$. In addition,  the field $\C(J^k)^G$ of rational invariants on $J^k$ is a rational field of transcendental degree one and
\beq\label{eq-jk}  \C(J^k)^G=\C(K_1).\eeq 
\end{theorem}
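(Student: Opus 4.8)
The plan is to read off the two transcendence degrees from the orbit geometry of the prolonged action, invoke the L\"uroth--Castelnuovo statement of Proposition~\ref{inv-field-prop}(4) to obtain rationality, and then pin down the orders of the generators by climbing the tower $\C(J^0)^G\subseteq\C(J^1)^G\subseteq\cdots$. Write $s_n$ for the maximal orbit dimension of the prolonged action on $J^n\cong\C^{n+2}$, so that by Proposition~\ref{inv-field-prop}(3) the transcendence degree of $\C(J^n)^G$ over $\C$ equals $t_n:=(n+2)-s_n$. First I would record the elementary constraints on $s_n$: it satisfies $0\le s_n\le r$ since orbit dimension is bounded by $\dim G=r$, it is nondecreasing, and prolonging from $J^n$ to $J^{n+1}$ adjoins a single coordinate, so $s_{n+1}\le s_n+1$. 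I would also note $s_0\ge 1$: as in the proof of Proposition~\ref{Thm:SymEff}, a positive-dimensional subgroup of $\PGL(3)$ cannot fix a Zariski-dense set of points of $\C^2$, so the generic orbit in $\C^2$ is positive-dimensional.

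The key dynamical input is that the orbit dimension increases by exactly one at each prolongation until it saturates: if $s_n<r$ then $s_{n+1}=s_n+1$. Equivalently, once the dimension of the generic prolonged stabilizer stops dropping it has stabilized, and by Proposition~\ref{prop-ovs} its stable value is $r$. Granting this, $s_n=\min(s_0+n,\,r)$, and since $s_0\ge 1$ we get $s_{r-1}=s_r=r$. Hence $t_{r-1}=(r+1)-r=1$ and $t_r=(r+2)-r=2$, which are precisely the two transcendence degrees asserted by the theorem. Justifying the strict-increase step carefully (via the infinitesimal description of the prolonged stabilizers) is a technical point, but a standard one in the moving-frame setting.

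Next I would turn to rationality and to the orders. Each $\C(J^n)=\C(x,y,\jy1,\dots,\jy n)$ is purely transcendental over $\C$, so Proposition~\ref{inv-field-prop}(4) applies: $\C(J^{r-1})^G$, of transcendence degree $1$, is rational, say $\C(J^{r-1})^G=\C(K_1)$, and $\C(J^r)^G$, of transcendence degree $2$, is rational as well. Set $k:=\ord(K_1)\le r-1<r$. Since $K_1\in\C(J^k)^G\subseteq\C(J^{r-1})^G=\C(K_1)$, the chain collapses and $\C(J^k)^G=\C(K_1)$, which is \eqref{eq-jk}. Choosing any $K_2\in\C(J^r)^G$ algebraically independent from $K_1$ (possible since $t_r=2$), I observe that every invariant of order at most $r-1$ already lies in $\C(J^{r-1})^G=\C(K_1)$; as $K_2\notin\C(K_1)$ this forces $\ord(K_2)=r$.

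It remains to prove the generation statement $\C(J^r)^G=\C(K_1,K_2)$ of \eqref{eq-jr}, and this is where I expect the real difficulty. A priori the inclusion $\C(K_1,K_2)\subseteq\C(J^r)^G$ is only an equality of transcendence degrees, and one must rule out a proper finite extension, i.e.\ upgrade ``transcendence basis'' to ``generating set.'' The structure I would exploit is that $J^r\to J^{r-1}$ realizes $\C(J^r)=\C(J^{r-1})(\jy r)$ as a simple transcendental extension, and that the prolongation formula of Definition~\ref{def-prolong} makes the action on the top variable affine, $\overline{\jy r}=(A\,\jy r+B)/C$ with $A,B,C$ of order at most $r-1$; thus $G$ acts on the generic $\C(K_1)$-fibre line by affine transformations, whose rational quotient is again a line. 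This should produce a single fibre invariant $K_2$ of order $r$ with $\C(J^r)^G=\C(K_1)(K_2)$, in the spirit of the generation-by-invariant-differentiation argument of \cite[Thm.~5.24]{olver:purple}, in which Lemma~\ref{lem-dk-dh} supplies the higher-order invariants $dK_2/dK_1,\ d^2K_2/dK_1^2,\dots$ that fill out $\C(J^n)^G$ for $n>r$. Establishing this equality cleanly, rather than merely exhibiting a transcendence basis, is the main obstacle, and I would devote the bulk of the argument to it.
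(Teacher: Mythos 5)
Your overall architecture (maximal orbit dimension on $J^n$ $\to$ transcendence degree via Proposition~\ref{inv-field-prop}(3) $\to$ rationality via L\"uroth--Castelnuovo) is the same as the paper's, but your key dynamical input is false. You claim that if $s_n<r$ then $s_{n+1}=s_n+1$, i.e.\ that the generic orbit dimension never pauses before saturating at $r$. The paper's own Remark following the theorem supplies a counterexample: for the three-dimensional subgroup acting by $(x,y)\mapsto(\lambda x+a,\lambda y+b)$ one has $r=3$, the action is transitive on $\C^2$ so $s_0=2$, yet on $J^1$ the orbit of a generic $(x,y,y^{(1)})$ is $\C^2\times\{y^{(1)}\}$ (since $\overline{y}^{(1)}=y^{(1)}$), so $s_1=2=s_0<3$, and only then $s_2=3$. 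In general the sequence pauses exactly once, at the order $k$ where the first invariant appears, and resumes afterwards; your formula $s_n=\min(s_0+n,r)$ fails precisely for the two subgroups with $1\le k\le r-2$ listed in that Remark. The correct mechanism --- and this is what the paper's proof actually implements --- is that strict increase is only guaranteed once \emph{two} independent invariants are available, because then Lemma~\ref{lem-dk-dh} lets you form $dK_2/dK_1$ and generate a new independent invariant at every subsequent order, so that the orbit dimension can never again grow; combined with Proposition~\ref{prop-ovs} this forces $k_2=r$ (no second independent invariant below order $r$) and rules out a third independent invariant at order $r$. A single invariant of minimal order gives you nothing to differentiate against, which is exactly why the pause at $n=k$ is possible. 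So you cannot get $t_{r-1}=1$ and $t_r=2$ the way you propose; you need the $H_i$-tower contradiction (or an equivalent careful proof of the stabilization property), not the naive monotonicity count.

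Two smaller remarks. First, once $t_{r-1}=1$ and $t_r=2$ are correctly established, your treatment of the orders of $K_1$ and $K_2$ and of \eqref{eq-jk} is fine and matches the paper. Second, your concern about upgrading ``transcendence basis'' to ``generating set'' for \eqref{eq-jr} is a fair one --- the paper disposes of it by citing Proposition~\ref{inv-field-prop}(4), which delivers a generating pair outright rather than merely a transcendence basis, and the remaining issue is only to arrange the orders of the generators using the fact that $\C(J^{r-1})^G=\C(K_1)$; your proposed fibrewise argument via the affine action on $\jy r$ is a plausible alternative in the spirit of \cite[Thm.~5.24]{olver:purple}, but as written it is a sketch, not a proof.
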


\begin{proof}  The dimension of an orbit can not exceed the dimension of the group. Therefore, since   $\dim J^{r-1}=r+1$, the transcendental degree of  $\C(J^{r-1})^G$ is at least 1 by Part 3.~of  Proposition~\ref{inv-field-prop}. Thus there exists a rational  invariant $K_1$  such that $\ord(K_1)=k_1<r$.  We may assume that the order $k_1$ of $K_1$ is minimal among all such invariants. Similarly, since  $\dim J^{r}=r+2$, the transcendental degree of  $\C(J^{r-1})^G$ is at least 2, and there exists a rational invariant $K_2$, algebraically independent from $K_1$,   such that $\ord(K_2)=k_2\leq r$.  By  the minimality assumption on $k_1$, we have $k_1\leq k_2$. Assume that  $k_2<r$.  By Proposition~\ref{lem-dk-dh}, invariant $H_1=\frac {dK_2}{dK_1}$ is of order $k_2+1$. For $i>1$, we   define invariants $H_i=\frac {dH_{i-1}}{dK_1}$.
The $n+2$  invariants $K_1,K_2, H_1, H_2, \dots, H_n$ are of orders $k_1, k_2, k_2+1, \dots k_2+n$, respectively.  Since $K_1$ and $K_2$  are independent, and each  subsequent invariant contains a new jet variable,  the gradients of these invariants as functions on $J^{k_2+n}$ are independent, and hence the invariants are independent. Therefore the maximal orbit dimension on $J^{k_2+n}$  does not exceed $\dim J^{k_2+n}-(n+2)=k_2$.  Since $n$ can be arbitrary large, it follows  
from Proposition~\ref{prop-ovs} that $k_2=r$. In summary, we proved so far
$$ k_1<k_2=r$$ and  that there are no differential invariants of orders strictly less than $k_1$, or strictly between $k_1$ and $r$. 

Assume that there is an invariant $K_3$ of order $r$, independent of $K_1$ and $K_2$. Then  by similar argument as in the above paragraph, the  $n+3$  invariants $K_1,K_2, K_3, H_1, H_2, \dots, H_n$  of orders $k_1, r, r, r+1, \dots r+n$, respectively,  are independent for all $n$. It follows that
the maximal orbit dimension on $J^{r+n}$  does not exceed $\dim J^{r+n}-(n+3)=r-1$ for all $n$. This contradicts Proposition~\ref{prop-ovs}.

We conclude that the transcendental degree of  $\C(J^k)^G$ is 1 and  the transcendental degree of  $\C(J^r)^G$ is 2.  Then \eqref{eq-jr} and 
\eqref{eq-jk} follow from  Part 4~of  Proposition~\ref{inv-field-prop}.
\end{proof}

%%%%%%%%%%%%%%%

\begin{remark} In fact,  from Theorem~5.24 in \cite{olver:purple} and  Sophus Lie's classification of  all infinitesimal group actions on the plane (see Table~5 in  \cite{olver:purple}) it follows that there  are only three possibilities for the differential order $k$ of  the lower order classifying invariant $K_1$, namely $k=r-1$, $k=r-2$ and $k=0$.
For most of the  actions (and all  actions  considered in Section~\ref{Sec:Projective} of this paper) $k=r-1$.  The case $k=0$ occurs if and only if  the action $G$ is intransitive on $\C^2$. An example of such action is the action of a $2$-dimensional subgroup of $\PGL(3)$, given by $(x,y)\to (\lambda x+a, y)$, where $\lambda\in \C^*$ is non-zero and $a\in \C$.  Among subgroups of $\PGL(3)$, the third possibility,  $k=r-2\neq 0$, occurs   only for two actions: (1) a three-dimensional subgroup acting by  $(x,y) \to (\lambda x+a,  \lambda y+b)$, where  $\lambda\in \C^*$ and $a, b\in \C$ and (2) a four-dimensional subgroup acting by  $(x,y)\to (\lambda x+a, cx+\lambda^2 y+b)$, where  $\lambda\in \C^*$ and $a, b,c\in \C$
 \end{remark}
 %%%%%%%%
 We can  use the same   definition of the classifying invariants as was given in \cite[Definition 7]{BKH} in the real case.
\begin{definition}
\label{def-dsep}
Let an $r$-dimensional algebraic group $G$ act on $\mathbb{C}^2$.
Let $K_1$ and $K_2$ be \emph{rational differential invariants} of orders ${k<r}$ and $r$,
respectively.
The set ${\mathcal I}=\{K_1,K_2\}$ is called {\em classifying } if $K_1$ separates orbits on a nonempty Zariski-open subset $W^{k}\subseteq J^{}$ and ${\mathcal I}$ separates orbits on a nonempty Zariski-open
subset $W^r\subseteq J^{r}$.
\end{definition}
%%%%%%%
Over $\C$ we can prove existence of a classifying set of invariants of any group action:

%%%
\begin{theorem} \label{th-class} For any action of $G\subset\PGL(3)$ on $\C^2$ there exists a classifying set ${\mathcal I}=\{K_1,K_2\}$ of differential invariants. Moreover the set $\mathcal I$ is classifying if and only if  $\I$   generates the field  $\C(J^{r})^G$ of rational differential  invariants of order  at most $r=\dim G$ and $K_1$ generates the field $\C(J^{r-1})^G$ of rational invariants of order  at most $r-1$ .
\end{theorem}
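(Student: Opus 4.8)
The plan is to obtain Theorem~\ref{th-class} as a direct consequence of the structural result Theorem~\ref{th-gen} together with Rosenlicht's theorem (Part 2 of Proposition~\ref{inv-field-prop}), transported to the prolonged action. The first thing I would record is that the prolongation in Definition~\ref{def-prolong} is a \emph{rational} action of the algebraic group $G$ on the affine variety $J^n \cong \C^{n+2}$, so Rosenlicht's criterion applies on every jet space: a subset of $\C(J^n)^G$ is separating (in the sense of the separating-set definition) if and only if it generates $\C(J^n)^G$. In particular, ``$K_1$ separates orbits on a nonempty open $W^k \subseteq J^k$'' means precisely ``$K_1$ generates $\C(J^k)^G$'', and ``$\{K_1,K_2\}$ separates orbits on $W^r \subseteq J^r$'' means precisely ``$\{K_1,K_2\}$ generates $\C(J^r)^G$''. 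This already matches the $J^r$-clause of the theorem verbatim.

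Next I would establish the field identity $\C(J^{r-1})^G = \C(J^k)^G$, where $k$ is the minimal order of a non-constant rational invariant. This is the one place where care is needed, since a transcendence degree one subfield need not coincide with a transcendence degree one overfield (e.g. $\C(t^2) \subsetneq \C(t)$). Instead of counting degrees I would use the fact, extracted from the proof of Theorem~\ref{th-gen}, that $G$ admits no rational differential invariant of order strictly between $k$ and $r$: if $K \in \C(J^{r-1})^G$ had $\ord(K) > k$ then $\ord(K)$ would lie in the forbidden range $(k,r)$, so $\ord(K) \le k$ and $K \in \C(J^k)^G$; the reverse inclusion is clear. The same no-intermediate-order fact also forces any non-constant invariant $K_1$ with $\ord(K_1) < r$ to have $\ord(K_1) = k$, since an order below $k$ would make $K_1$ constant and an order in $(k,r)$ is forbidden; this removes any ambiguity between the $J^k$ of the definition and the $J^{r-1}$ of the statement.

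With these two observations the theorem becomes a translation exercise. For existence, Theorem~\ref{th-gen} supplies $K_1$ of order $k<r$ with $\C(J^k)^G = \C(K_1)$ and $K_2$ of order $r$ with $\C(J^r)^G = \C(K_1,K_2)$; applying Rosenlicht on $J^k$ and on $J^r$ yields domains $W^k$ and $W^r$ on which $K_1$, respectively $\{K_1,K_2\}$, separate orbits, so $\mathcal{I} = \{K_1,K_2\}$ is classifying. For the characterization I would run both implications through Rosenlicht: $\mathcal{I}$ is classifying iff $K_1$ separates orbits on $J^k$ and $\mathcal{I}$ separates orbits on $J^r$, iff $K_1$ generates $\C(J^k)^G$ and $\mathcal{I}$ generates $\C(J^r)^G$; and by the field identity of the previous paragraph, ``$K_1$ generates $\C(J^k)^G$'' is literally the condition ``$K_1$ generates $\C(J^{r-1})^G$'' stated in the theorem.

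The only genuinely delicate step, as flagged above, is the reconciliation $\C(J^{r-1})^G = \C(J^k)^G$: everything else is bookkeeping once Rosenlicht's theorem is seen to apply to the prolonged action on the affine jet spaces. I would therefore spend the bulk of the write-up making the no-intermediate-order consequence of Theorem~\ref{th-gen} explicit and keep the Rosenlicht applications terse.
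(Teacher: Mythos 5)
Your proposal is correct and follows essentially the same route as the paper, whose proof of Theorem~\ref{th-class} is precisely ``immediate from Theorem~\ref{th-gen} and Part 2 of Proposition~\ref{inv-field-prop}.'' The extra care you take in reconciling $\C(J^{r-1})^G$ with $\C(J^k)^G$ via the no-intermediate-order consequence of Theorem~\ref{th-gen} fills in a step the paper leaves implicit, but it is the same argument, not a different one.
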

%%%
%%%%%
\begin{proof}  This result follows immediately from Theorem~\ref{th-gen} and Part 2~of Proposition~\ref{inv-field-prop}.
\end{proof}
%%%
Remark~\ref{rem-sep-gen-real} underscores that  over $\R$ the above proof of Theorem~\ref{th-class} is not valid.  It is an interesting question, whether or not  the statement of this theorem (or possibly some modification) is valid over $\R$. 
In Section~\ref{Sec:Sig} we show that signatures based on classifying invariants characterize the equivalence classes of generic algebraic  curves. In Section~\ref{subsec:Groups} we list  classifying sets of invariants for the full projective group and several of its classical subgroups.
 The following propositions asserts a simple relationship between any two classifying sets of invariants.
%%%
\begin{proposition}\label{prop-choice} Let  ${\mathcal I}=\{K_1,K_2\}$ be a classifying sets of differential invariants for  the  
 action of $G$ on $\C^2$.  Let   $\tilde{\mathcal I}=\{\tilde K_1,\tilde K_2\}$ be another pair of differential invariants. Then  $\tilde{\mathcal I}$ is a classifying set  if and only if  there exist constants $a,b,c,d\in \C$, such that   $ad-bc\neq 0$ and rational functions $\alpha,\beta,\gamma, \delta\in \C(\kappa)$, such that $\alpha\delta-\gamma\beta\neq 0$ such that
 \beq\label{class-rel}\tilde K_1=\frac{aK_1+b}{cK_1+d}\text{  and  } \tilde K_2=\frac{\alpha(K_1)K_2+\beta(K_1)}{\gamma(K_1)K_2+\delta(K_1)}.\eeq 
\end{proposition}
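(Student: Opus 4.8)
The plan is to reduce the proposition to a purely field-theoretic statement and then apply a relative form of L\"uroth's theorem. By Theorem~\ref{th-class}, the pair $\{\tilde K_1,\tilde K_2\}$ is classifying if and only if $\tilde K_1$ generates $\C(J^{r-1})^G$ and $\{\tilde K_1,\tilde K_2\}$ generates $\C(J^{r})^G$. By Theorem~\ref{th-gen}, applied to the given classifying pair $\{K_1,K_2\}$ (and using that its proof exhibits no invariants of order strictly between $k$ and $r$), these fields are purely transcendental with explicit generators: $\C(J^{r-1})^G=\C(K_1)$ and $\C(J^{r})^G=\C(K_1,K_2)$. Writing $L:=\C(K_1)$ (the field denoted $\C(\kappa)$ in the statement, with $\kappa=K_1$), the second field is the one-variable purely transcendental extension $L(K_2)$. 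Hence the whole claim reduces to showing that, for invariants $\tilde K_1,\tilde K_2\in\C(K_1,K_2)$, the two conditions $\C(\tilde K_1)=\C(K_1)$ and $\C(\tilde K_1,\tilde K_2)=\C(K_1,K_2)$ hold if and only if $\tilde K_1,\tilde K_2$ have the form \eqref{class-rel}.

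The key ingredient is the following standard fact. For any field $F$ and a single transcendental $t$, an element $f\in F(t)$ satisfies $F(f)=F(t)$ if and only if $f=(at+b)/(ct+d)$ for some $a,b,c,d\in F$ with $ad-bc\neq 0$. This follows from the degree formula $[F(t):F(f)]=\max(\deg p,\deg q)$ for $f=p/q$ written in lowest terms, since $F(f)=F(t)$ forces this degree to equal $1$; the nondegeneracy $ad-bc\neq 0$ is precisely the condition that the resulting M\"obius map be nonconstant and invertible over $F$, so that $t\in F(f)$.

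I would then apply this lemma twice. First, with $F=\C$ and $t=K_1$: the requirement $\C(\tilde K_1)=\C(K_1)$ is equivalent to $\tilde K_1=(aK_1+b)/(cK_1+d)$ with $a,b,c,d\in\C$ and $ad-bc\neq 0$, which is the first relation in \eqref{class-rel}. Second, I work relatively over $L=\C(K_1)$. Once $\C(\tilde K_1)=L$ is established, we have $\C(\tilde K_1,\tilde K_2)=L(\tilde K_2)$, so the condition $\C(\tilde K_1,\tilde K_2)=L(K_2)$ becomes $L(\tilde K_2)=L(K_2)$; applying the lemma with $F=L$ and $t=K_2$ gives $\tilde K_2=(\alpha K_2+\beta)/(\gamma K_2+\delta)$ with $\alpha,\beta,\gamma,\delta\in\C(K_1)$ and $\alpha\delta-\gamma\beta\neq 0$, which is the second relation in \eqref{class-rel}. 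This establishes the forward implication.

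For the converse I run these equivalences backward. If $\tilde K_1,\tilde K_2$ have the stated form then, being rational functions of the invariants $K_1,K_2$, they are themselves rational differential invariants; the conditions $ad-bc\neq 0$ and $\alpha\delta-\gamma\beta\neq 0$ make the two M\"obius maps invertible, so $K_1\in\C(\tilde K_1)$ and $K_2\in\C(K_1,\tilde K_2)=\C(\tilde K_1,\tilde K_2)$, yielding $\C(\tilde K_1)=\C(K_1)=\C(J^{r-1})^G$ and $\C(\tilde K_1,\tilde K_2)=\C(K_1,K_2)=\C(J^r)^G$; by Theorem~\ref{th-class}, $\{\tilde K_1,\tilde K_2\}$ is classifying. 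I expect the only genuine subtlety—rather than a real obstacle—to be recognizing that the second invariant must be treated relatively, so that the coefficients $\alpha,\beta,\gamma,\delta$ are allowed to be rational functions of $K_1$ rather than constants; this asymmetry in \eqref{class-rel} is exactly what the relative L\"uroth statement over the base field $\C(K_1)$ produces, and it reflects the fact that $K_1$ has strictly lower differential order than $K_2$.
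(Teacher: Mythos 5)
Your proposal is correct and follows essentially the same route as the paper: both reduce the statement, via Theorem~\ref{th-class}, to the equalities $\C(\tilde K_1)=\C(K_1)$ and $\C(\tilde K_1,\tilde K_2)=\C(K_1,K_2)$, and then invoke the fact that a generator of a purely transcendental extension $F(t)$ must be a fractional linear function of $t$ over $F$, applied once with $F=\C$ and once relatively with $F=\C(K_1)$. The only cosmetic difference is that you justify this fact by the degree formula $[F(t):F(f)]=\max(\deg p,\deg q)$, whereas the paper cites the standard description of the automorphisms of $\mathbb{K}(z)$ fixing $\mathbb{K}$; these are the same fact.
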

%%5

%%%
\begin{proof}
By  Theorems~\ref{th-gen}   and~\ref{th-class}, we know that $\C(J^k)^G=\C(K_1)$ and  $\C(J^r)^G=\C(K_1,K_2)$ are  rational fields of transcendental  degrees 1 and 2 respectively for $r=\dim G$ and some integer $k<r$.
Moreover, from the proof of Theorem~\ref{th-gen}, we know that there are no differential invariants of order strictly  greater than $k$ and strictly less than $r$.

 Assume first that   $\tilde{\mathcal I}$  is a  classifying set. Then for $r=\dim G$ and some integer $k<r$, we have $\ord(\tilde K_2)=r $ and $\ord(\tilde K_1)=k$ and $\C(J^k)^G=\C(\tilde K_1)$ and  $\C(J^r)^G=\C(\tilde K_1,\tilde K_2)$. 
Now we have two sets of generators for each of the fields $\C(J^k)^G$ and  $\C(J^r)^G$ and so there  exist invertible rational functions $\Phi\in \C(\kappa_1)$ and $\Psi\in \C(\kappa_1,\kappa_2)$   such that  $\tilde K_1=\Phi(K_1)$ and $\tilde K_2=\Psi(K_1,K_2)$.  The function $\Phi$  induces an automorphism of  $\C(J^k)^G$ fixing $\C$. 
  It is known (see, for instance,  \cite[Exercise 6, Sec. V.2]{Halgebra}) that the only automorphisms of a rational field $\mathbb K (z)$ fixing the ground field $\mathbb K$ are given by linear fractional maps over  $\mathbb K$.  The first formula in \eqref{class-rel} follows with  $\mathbb K=\C$.  
Similarly $\Psi$ induces an automorphism of   $\C(J^r)^G=\C(J^k)^G(K_2)$ fixing $\C(J^k)^G=\C(K_1)$.  By the  same argument, with  $\mathbb K=\C(K_1)$, the second formula in \eqref{class-rel} follows.   

Now assume that $\tilde K_1$ and $\tilde K_2$ are given by \eqref{class-rel}. Then since these formulas are invertible, $\ord(\tilde K_1)=k $, $\ord(\tilde K_2)=r$, and  $\C(J^k)^G=\C(\tilde K_1)$, while  $\C(J^r)^G=\C(\tilde K_1,\tilde K_2)$. By Theorem~\ref{th-class}, $\tilde{\mathcal I}=\{\tilde K_1,\tilde K_2\}$ is a classifying set.
\end{proof}
%%%%

%%%%%
\subsection{Restriction to algebraic curves}\label{Sec:DiffCurve} 
%%%

To evaluate differential functions on an affine curve, we lift the curve into the jet space as follows. 
Let  $F(x,y)\in \C[x,y]$ be irreducible and $\Chi = V(F) \subset \C^2$. 
For any point $p=(p_1, p_2)\in X$ with $F_y( p)\neq 0$ the curve $X$ agrees in some neighborhood of $ p$ with the graph of an analytic function $y=f(x)$. 
Then for a positive integer $n$, we  can define $y^{(n)}_\Chi( p)=f^{(n)}(p_1)$ to be the  $n$-th derivative of $f(x)$ at $x=p_1$. 
One can  show that for each $n\in \Z_+$,  $y^{(n)}_X$  is a rational function on $\Chi$ that, using  the implicit differentiation,  can be written as a rational function of partial derivatives of $F$. For example, 
\begin{equation}\label{Eq:PartialDeriv}
y_\Chi^{(1)}=\frac{-F_x}{F_y} \ \ \ \text{ and } \ \ \ \  y_\Chi^{(2)}=\frac{-F_{xx}F_y^2+2F_{xy}F_xF_y-F_{yy}F_x^2}{F_y^3}.
\end{equation}
It follows  that, $y^{(n)}_X$ is a rational function on $X$.

\begin{definition}
The \emph{$n$-th jet} of a curve $\Chi\subset \C^2$, denoted $X^{(n)}$, is {the algebraic closure of the image of $X$ under} the rational map $j^n_\Chi: \Chi\dashrightarrow J^n$, where for $p\in \Chi$,
$$
j^n_{\Chi}(p)=(x(p),y(p),y^{(1)}_\Chi(p),\hdots,y^{(n)}_\Chi(p)).
$$
\end{definition}
Note that the prolongation of the action of $G$ to $J^n$ (Definition~\ref{def-prolong}) 
is defined so that the following fundamental property holds: 
\beq\label{pr-g-comm}
j^n_{g\cdot\Chi}(g\cdot p)\ = \ g\cdot j^n_\Chi(p) \ \text{ for all $g\in G$ and $p\in \Chi$ where $g\cdot p$ is defined.}
\eeq
In particular, the $n$-th jet of the image of $\Chi$ under the action of $g\in G$ coincides with the  image of  the $n$-th jet of $\Chi$ under the prolonged action of $g$:
\beq\label{pr-gX} \overline{g\cdot X^{(n)}}= \overline{(g\cdot X)^{(n)}}.\eeq

\begin{definition}
For a curve $\Chi$, the \emph{restriction} of a differential function $K$ to $\Chi$ is denoted $K|_\Chi$ and defined by the composition,
$K|_\Chi=K\circ j^n_\Chi.$
\end{definition}
If $K$ is a \textit{rational} differential function on $J^n$, then $K|_\Chi$ is a rational function on $X$, and 
we can obtain the explicit formula for $K|_\Chi$ as a rational function of $x$ and $y$ by substituting the expressions $y_\Chi^{(1)},\hdots, y^{(n)}_\Chi$ in \eqref{Eq:PartialDeriv} for coordinates $y^{(1)},\hdots, y^{(n)}$.
%%%%
\begin{definition}\label{Def:IReg}
Let $\mathcal{I}=\{K_1,K_2\}$ be a classifying set of rational differential invariants for a group  $G$ of dimension $r$.  Let $\ord(K_1)=k$ and let  $W_1\subset J^k$ be  a maximal domain of separation for $\{K_1\}$ and $W_2\subset J^r$ be a maximal domain of separation for $\mathcal{I}$.  Then, for $\Chi\subset \C^2$, a point $p\in\Chi$ is called \emph{$\mathcal{I}$-regular} if
\begin{itemize}
\item[(a)] $j^r_\Chi(p)$ is defined;
\item[(b)] $j_\Chi^{k}(p)\in W_1$ and $j^r_\Chi(p)\in W_2;$
\item[(c)] $\frac{\partial K_1}{\partial y^{k}}|_{j_\Chi^{k}(p)}\neq 0$ if $K_1$ is constant on $\Chi$, and $\frac{\partial K_2}{\partial y^{(r)}}|_{j^r_\Chi(p)}\neq 0$ otherwise.
\end{itemize}
\end{definition}

The condition that $j^r_\Chi(p)$ is defined can equivalently be stated as $F_y(p)\neq 0$ where $F(x,y)$ is the polynomial whose zero set equals $\Chi$. Thus singular points of $\Chi$ are not $\mathcal{I}$-regular.

\begin{definition}\label{Def:NonExcept} A complex algebraic curve $\Chi\subset \C^2$ is called \emph{non-exceptional} with respect to a classifying set of differential invariants, $\mathcal{I}$, if all but a finite number of its points are $\mathcal{I}$-regular. 
\end{definition}

%%%%

%CONJECTURE:  a generic curve is non exceptional if for  all choices of  $G$-classifying set of differential invariants%%%%
%%%
We will need the following lemma to show that generic curves are non-exceptional. 
%%%%
\begin{lemma}\label{lem:JetGen}
Let $d,n$ be positive integers satisfying $n\leq \binom{d+2}{2}-2 $. 
For a generic point $a = (a_0, \hdots, a_n)\in \C^{n+1}$, there exists an algebraic curve $X \subset \C^2$ of degree $d$ 
for which $(0,a_0) \in X$ and $j^{(n)}_X(0,a_0) \ = \ (0,a_0,\hdots, a_n)$. 
\end{lemma}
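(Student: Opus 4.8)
The plan is to set up the correspondence between a degree-$d$ curve through $(0,a_0)$ and its prescribed jet as a \emph{linear} condition on the coefficients of the defining polynomial, and then to show this linear system is solvable (indeed has the expected rank) for generic jet data $a$. Write $F(x,y)=\sum_{i+j\le d} c_{ij}x^i y^j$, a polynomial with $N=\binom{d+2}{2}$ coefficients $c_{ij}$. The conditions we must impose are: (i) $F(0,a_0)=0$, i.e.\ the curve passes through the point $(0,a_0)$; and (ii) the $n$ equations $y^{(k)}_X(0,a_0)=a_k$ for $k=1,\dots,n$, where $y^{(k)}_X$ is computed by the implicit-differentiation formulas of \eqref{Eq:PartialDeriv}. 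The key structural observation is that, once we fix the base point $(0,a_0)$ and treat $a_1,\dots,a_n$ as given, each condition $y^{(k)}_X=a_k$ is \emph{linear in the coefficients $c_{ij}$} after clearing the denominator $F_y^{2k-1}$ (which is nonzero provided $F_y(0,a_0)\ne0$). This is because the jet coordinate $y^{(k)}_X$ is obtained by repeated total differentiation restricted to the curve, and each such operation is linear in $F$.

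The cleanest way to organize the argument is to reverse the roles: rather than fixing $a$ and solving for $F$, I would first exhibit \emph{one} pair $(F_0,a)$ realizing the required incidence, and then use genericity to conclude it holds for generic $a$. Concretely, first I would show that the evaluation map
\beq\label{jetmap}
\Psi\colon \{F\in\C[x,y]_{\le d} : F(0,a_0)=0,\ F_y(0,a_0)\ne 0\}\ \dashrightarrow\ \C^{n}, \qquad F\ \mapsto\ \bigl(y^{(1)}_X(0,a_0),\dots,y^{(n)}_X(0,a_0)\bigr),
\eeq
is dominant, where $X=V(F)$. Since the source is (an open subset of) a linear space of dimension $N-1=\binom{d+2}{2}-1$ and the target has dimension $n\le \binom{d+2}{2}-2=N-2<N-1$, dominance is plausible on dimension grounds; the content is to verify the differential of $\Psi$ (equivalently the Jacobian of the cleared linear conditions) has full rank $n$ at some point. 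If $\Psi$ is dominant then its image contains a nonempty Zariski-open set, so for generic $a=(a_0,\dots,a_n)$ there is an $F$ in the source with $\Psi(F)=(a_1,\dots,a_n)$, and the corresponding $X=V(F)$ is the desired curve.

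To verify the rank condition I would exhibit an explicit witness. A natural choice is to use the base point $(0,a_0)$ and pick monomials whose jets at that point are easy to control: for instance, expanding $F$ in powers of $x$ and $(y-a_0)$ and observing that the coefficient of $x^k$ (after normalizing the coefficient of $(y-a_0)$ so that $F_y(0,a_0)\ne0$) feeds \emph{first} into $y^{(k)}_X(0,a_0)$ in a triangular fashion. That is, the Jacobian matrix $\partial(y^{(1)}_X,\dots,y^{(n)}_X)/\partial(\text{chosen coefficients})$ can be arranged to be triangular with nonzero diagonal, which forces full rank $n$. This requires exactly $n$ free coefficients beyond the incidence and the nonvanishing normalization, and the hypothesis $n\le\binom{d+2}{2}-2$ guarantees there are enough monomials of degree $\le d$ available to supply them.

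I expect the main obstacle to be the bookkeeping in step three: making precise the claim that the jet map is triangular in a suitable choice of coefficients, i.e.\ verifying that $y^{(k)}_X(0,a_0)$ depends on the $k$-th new coefficient with a nonzero (explicitly computable) coefficient while depending on the higher ones trivially at the chosen witness point. This amounts to inductively analyzing the implicit-differentiation recursion in \eqref{Eq:PartialDeriv} and Definition~\ref{def-prolong}, and keeping track of which coefficient of $F$ first contributes to each jet coordinate; the algebra is routine but must be handled carefully to confirm the diagonal entries are genuinely nonzero. Once the triangularity is established, dominance of $\Psi$ and hence the genericity statement follow immediately, and the degree bound $n\le\binom{d+2}{2}-2$ is seen to be precisely what is needed for the count of available monomials.
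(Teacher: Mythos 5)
There is a genuine gap. Two issues, one minor and one fatal. The minor one: the cleared conditions $y^{(k)}_X(0,a_0)=a_k$ are \emph{not} linear in the coefficients of $F$ for $k\ge 2$ --- from \eqref{Eq:PartialDeriv}, the numerator of $y^{(k)}_X$ is homogeneous of degree $2k-1$ in the coefficients, so clearing $F_y^{2k-1}$ leaves a degree-$(2k-1)$ condition. This alone would not sink the strategy, since you only need the differential of $\Psi$ to be surjective at one witness. The fatal issue is the triangularity claim. Writing $F$ in powers of $x$ and $u=y-a_0$ with coefficients $b_{ij}$, the coefficient $b_{ij}$ first enters $f_k$ (where $y=a_0+f(x)$ parametrizes the branch and $y^{(k)}_X(0,a_0)=k!f_k$) at order $k=i+j$. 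Hence the only coefficients that "feed first" into $f_k$ are those with $i+j=k$, and for $k>d$ there are none: \emph{every} available coefficient has already contributed to some lower-order jet. So the proposed triangular Jacobian with nonzero diagonal simply does not exist once $n>d$. But the lemma is needed precisely in that regime --- the hypothesis allows $n$ up to $\binom{d+2}{2}-2$, which is quadratic in $d$, and the application in Theorem~\ref{thm-non-excep-generic} uses $n=\dim G$ (up to $8$ for $\PGL(3)$) with $d$ as small as $3$ or $4$. Establishing surjectivity of the differential for $d<n\le\binom{d+2}{2}-2$ is the real content of the lemma, and it is not the "routine bookkeeping" you describe: the higher-order contributions of each $b_{ij}$ mix nontrivially with products of the lower $f_m$'s, and no choice of $n$ coefficients yields an a priori triangular structure.

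For comparison, the paper sidesteps the Jacobian entirely. It forms the incidence variety $\Y\subset\PP(\C[x,y]_{\le d})\times\C^{n+1}$ of pairs $([F],a)$ realizing the jet condition, computes $\dim\Y=\binom{d+2}{2}-1$ by noting that each condition $a_k=y^{(k)}_X(0,a_0)$ involves the new variable $a_k$, and then argues by contradiction: if the projection to $\C^{n+1}$ landed in a hypersurface $P(y,y^{(1)},\dots,y^{(n)})=0$, the uniqueness theorem for complex ODEs would force the fibers of that projection to be finite (the analytic branch, hence the polynomial $F$ up to scale, is determined by the initial jet), giving $\dim\Y\le n$ --- contradicting $\dim\Y=\binom{d+2}{2}-1>n$. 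That argument handles all $n\le\binom{d+2}{2}-2$ uniformly. If you want to keep a direct approach, you would need either to carry out the full-rank verification at a concrete witness for $n>d$ (nontrivial) or to import an indirect ingredient of this kind.
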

%%%%
\begin{proof}
Consider the subset $\mathcal{Y}$ of $\PP(\C[x,y]_{\leq d}) \times \C^{n+1}$
consisting of pairs $([F],a)$ for which $F$ is irreducible of degree $d$,  
$F(0,a_0)=0$, $F_y(0,a_0)\neq 0$, and $j^{(n)}_{V(F)}(0,a_0) = (0,a_0,\hdots, a_n)$. 
Since $j^{(n)}_{V(F)}$ is a rational function of both the points of $V(F)$ and the coefficients 
of $F$, as seen in \eqref{Eq:PartialDeriv}, this is a quasi-projective variety. 
The conditions $F(0,a_0)=0$ and $a_k = y^{(k)}_X(0,a_0)$ are algebraically independent, 
since each involves a new variable, $a_k$. 
From this, it follows that $\Y$ has codimension $n+1$ in $\PP(\C[x,y]_{\leq d}) \times \C^{n+1}$ 
and thus dimension $\binom{d+2}{2}-1$.  
The projection of $\mathcal{Y}$ onto $\C^{n+1}$ is therefore a quasi-affine variety.
It either contains a nonempty Zariski-open set or is contained in a hypersurface in $\C^{n+1}$. 
We need to rule out the latter when $n\leq\binom{d+2}{2}-2$.

Suppose for the sake of contradiction that for some $n\leq\binom{d+2}{2}-2$, 
there is a polynomial relation $P(y,y^{(1)}, \hdots, y^{(n)})=0$ that holds for every point 
on the image of $X\cap V(x)$ under $j^{(n)}_X$ for every irreducible curve $X$ of degree $d$. 
Without loss of generality, we can assume that $n$ is the minimal integer for which this holds 
and that the polynomial $P$ is irreducible. 
Then, by Bertini's theorem, for generic $a_0, \hdots, a_{n-1} \in \C$, $P(a_0, \hdots, a_{n-1},y^{(n)})$ 
is a non-zero polynomial in $y^{(n)}$ with simple roots, around which $y^{(n)}$ is an analytic function of 
$a_0,\hdots, a_{n-1}$. 
Due to  the uniqueness  theorem for the solutions of complex ODEs \cite{In44}, for any  such  $a_0, \hdots, a_{n-1}$ and $a_n$ with $P(a_0, \hdots, a_n)=0$, 
there exists a unique solution $y=f(x)$ to the differential equation $P(y, y^{(1)}, \hdots, y^{(n)})=0$ 
satisfying the initial conditions $x=0$, $f(0)=a_0$, and $f^{(k)}(0)=a_k$ for $k=1, \hdots, n$. 

If there exists an irreducible polynomial $F\in \C[x,y]$ of degree $d$ for which 
$F(x,f(x))$ is identically zero, then $F$ is unique up to scaling. 
This means that every point in the projection of $\Y$ onto $\C^{n+1}$ has at most one preimage. 
Since the projection has dimension $\leq n$, this implies that 
the dimension of $\Y$ is also at most $n$, which contradicts the calculation that $\dim(\Y) $ equals $ \binom{d+2}{2} -1 >n$. 
Therefore the projection of $\Y$ onto $\C^{n+1}$ must be Zariski-dense. 
\end{proof}
%%%%%
\begin{theorem}\label{thm-non-excep-generic} 
Let $\I$ be a $G$-classifying set of rational differential invariants 
for the action of a group $G$.
Then for $d\in \Z_+$ with $\binom{d+2}{2} -2\geq \dim G$, 
a generic plane curve of degree $d$ is non-exceptional with respect to $\I$. 
\end{theorem}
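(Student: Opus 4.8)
The plan is to show that the non-$\mathcal{I}$-regular points form a proper closed subset of a generic degree-$d$ curve, so that only finitely many points fail to be $\mathcal{I}$-regular. Recall that $\mathcal{I}$-regularity is defined by conditions (a), (b), (c) in Definition~\ref{Def:IReg}, each of which is an open condition in the jet space $J^r$: condition (a) requires $F_y(p)\neq 0$, condition (b) requires $j^k_\Chi(p)\in W_1$ and $j^r_\Chi(p)\in W_2$ where $W_1,W_2$ are Zariski-open domains of separation, and condition (c) requires a certain partial derivative to be nonzero. The failure locus of each condition is thus the preimage under the jet map $j^r_\Chi$ of a proper Zariski-closed subset $Z\subset J^r$ (or $J^k$). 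Since $j^r_\Chi$ is a rational map from the irreducible curve $\Chi$, the preimage $(j^r_\Chi)^{-1}(Z)$ is either all of $\Chi$ or a finite set. Hence it suffices to prove that for a generic degree-$d$ curve, the image $X^{(r)}=\overline{j^r_\Chi(\Chi)}$ is \emph{not} contained in any of these proper closed subsets of $J^r$.

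First I would set $n=r=\dim G$ and invoke the hypothesis $\binom{d+2}{2}-2\geq \dim G$, which is exactly the numerical condition needed to apply Lemma~\ref{lem:JetGen} with this $n$. That lemma guarantees that the image of the jet map, as the curve $X$ ranges over generic degree-$d$ curves, is Zariski-dense in $J^r$ — more precisely, for a generic jet value $a\in\C^{r+1}$ there is a degree-$d$ curve realizing that jet at a point. The key consequence I would extract is that the union over generic degree-$d$ curves $X$ of the jets $X^{(r)}$ covers a Zariski-dense subset of $J^r$; equivalently, no single proper closed subset of $J^r$ can contain $X^{(r)}$ for all curves in a Zariski-open family.

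The main step is then a genericity/incidence argument. For each of the finitely many closed conditions defining non-regularity (the complements of $W_1$, $W_2$, the vanishing of $F_y$, and the vanishing of the relevant partial derivative of $K_1$ or $K_2$), I would argue that the set of degree-$d$ curves $X$ whose jet $X^{(r)}$ lies entirely inside that closed subset is itself contained in a proper closed subset of the parameter space $\PP(\C[x,y]_{\leq d})$. This follows because if $X^{(r)}\subset Z$ held for all $X$ in a Zariski-open set of curves, then by Lemma~\ref{lem:JetGen} the union of these jets would be dense in $J^r$ yet contained in the proper closed set $Z$, a contradiction. Taking the finite intersection of the resulting Zariski-open sets of ``good'' curves, and intersecting with the open set where $F$ is irreducible of degree exactly $d$, yields a nonempty Zariski-open set $\mathcal{P}_d$ of curves for which $X^{(r)}$ meets each open regularity condition, so that the non-regular locus on $X$ is a proper closed, hence finite, subset.

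The hard part will be verifying that condition (c) is genuinely an open, nontrivial condition that a generic jet satisfies, since it is stated conditionally (depending on whether $K_1$ is constant on $\Chi$) and involves the leading partial derivatives $\partial K_1/\partial y^{(k)}$ or $\partial K_2/\partial y^{(r)}$. I would handle the dichotomy by noting that for a generic curve $K_1$ is \emph{not} constant on $\Chi$ — since $X^{(r)}$ is not contained in any level set of $K_1$, again by the density from Lemma~\ref{lem:JetGen} — so the relevant clause is the nonvanishing of $\partial K_2/\partial y^{(r)}$, which is a nonzero rational function on $J^r$ because $K_2$ has differential order exactly $r$ by Theorem~\ref{th-gen}. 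Its nonvanishing locus is therefore a nonempty Zariski-open subset of $J^r$, and the density argument applies to it as to the others. Once this is in place, assembling the finitely many open conditions completes the proof.
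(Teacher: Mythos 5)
Your proposal is correct and follows essentially the same route as the paper: both reduce the theorem to showing that, for a generic degree-$d$ curve, the $r$-jet of the curve meets the Zariski-open locus of $\I$-regular jet values (equivalently, is not contained in any of the finitely many bad closed subsets of $J^r$), and both deduce this from Lemma~\ref{lem:JetGen} via the incidence variety of (curve, jet) pairs, using that the regular locus is open on an irreducible curve so one regular point suffices. The only differences are cosmetic: you argue by contradiction with the density of jets over an open family where the paper argues directly that the corresponding open subset of the incidence variety still dominates the space of curves, and you dispose of the dichotomy in condition~(c) of Definition~\ref{Def:IReg} (by noting $K_1|_X$ is generically non-constant) somewhat more explicitly than the paper does.
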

%%%%%%
\begin{proof}
For an irreducible curve $\Chi$, the $\I$-regular points form a Zariski-open subset of $X$, as seen in Definition~\ref{Def:IReg}.  
Either this is all but finitely-many points of $X$, in which case $X$ is non-exceptional, or empty, meaning that 
no points of $X$ are $\I$-regular. In particular, if all intersection points of $X$ with $V(x)$ are $\I$-regular, then $X$ is 
non-exceptional. 

Indeed, the condition that a point $p$ is $\I$-regular on $X$ is equivalent to 
the jet $j^{(r)}_X(p)$ belonging to a Zariski-open subset $\mathcal{U}$ of $J^{r}\cong \C^{r+2}$, 
where $r = \dim(G)$. 
Consider the quasi-projective variety $\Y$ defined in the proof of Lemma~\ref{lem:JetGen} with $n=r$. 
Its intersection with $\PP(\C[x,y]_{\leq d})\times \mathcal{U}$ is an open subset of $\Y$, which 
is nonempty by Lemma~\ref{lem:JetGen}. 

Furthermore, the projection of $\Y$ onto  $\PP(\C[x,y]_{\leq d})$ is dominant (i.e. the image in Zariski-dense). 
Specifically, consider the open dense set of irreducible polynomials $F\in \C[x,y]_{\leq d}$
 for which $F(0,y)$ has a simple root $y=a_0$ at which $F_y(0,a_0)$ is nonzero. 
 For any such $F$,  $([F],a)$ belongs to $\Y$, where $j^{(r)}_{V(F)}(0,a_0) = (0,a)$. 
 It follows that the projection of the set $\Y\cap ( \PP(\C[x,y]_{\leq d})\times \mathcal{U}) $ 
onto $\PP(\C[x,y]_{\leq d})$ is also dominant. 
Therefore, for a generic plane curve of degree $d$, the points  $X\cap V(x)$
are $\I$-regular in $X$, and thus $X$ is non-exceptional. 
\end{proof}
%%%%
We will also make use of the $G$-invariance of the set of  non-exceptional curves.
%%%
\begin{lemma} \label{lem-non-except} If $X$ is non-exceptional then so is $Y=\overline{g\cdot X} $ for all $g\in G$.
\end{lemma}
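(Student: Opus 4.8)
The plan is to transport $\mathcal{I}$-regularity across the birational map $p\mapsto g\cdot p$, using the intertwining property \eqref{pr-g-comm} of the prolonged action together with the $G$-invariance of the domains of separation. Since the action preserves degree and irreducibility, $Y=\overline{g\cdot X}$ is again an irreducible curve, and $\psi\colon X\dashrightarrow Y$, $p\mapsto g\cdot p$, is birational with inverse induced by $g^{-1}$. Hence all but finitely many $q\in Y$ equal $g\cdot p$ for a unique $p\in X$ at which $g\cdot p$ and $j^r_X(p)$ are defined; because $X$ is non-exceptional, cofinitely many such $p$ are $\mathcal{I}$-regular. As $X$ and $Y$ are curves, it suffices to show that $q=g\cdot p$ is $\mathcal{I}$-regular on $Y$ for cofinitely many $\mathcal{I}$-regular $p$, since finitely many exceptional conditions may be discarded.

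First I would dispatch conditions (a) and (b) of Definition~\ref{Def:IReg}. Because $g\cdot X$ is dense in $Y$, the jet maps agree, $j^n_Y(g\cdot p)=j^n_{g\cdot X}(g\cdot p)$, and \eqref{pr-g-comm} identifies this with $g\cdot j^n_X(p)$ wherever the prolonged action is defined; being rational, this excludes only finitely many $p$, giving (a). For (b), I would invoke the fact recorded after the definition of a domain of separation that a maximal domain of separation is a union of orbits, hence $G$-invariant; applying this to $W_1\subset J^k$ and $W_2\subset J^r$ and using $j^k_X(p)\in W_1$, $j^r_X(p)\in W_2$ yields $j^k_Y(q)=g\cdot j^k_X(p)\in W_1$ and $j^r_Y(q)=g\cdot j^r_X(p)\in W_2$.

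The hard part will be condition (c), the non-vanishing of a top-order partial derivative. I would first note that $K_1$ is constant on $X$ if and only if it is constant on $Y$, since invariance gives $K_1|_Y(g\cdot p)=K_1(g\cdot j^r_X(p))=K_1(j^r_X(p))=K_1|_X(p)$, so the two restrictions have equal image. The essential point is that the prolongation is triangular in the jet coordinates: by Definition~\ref{def-prolong} each $\overline y^{(j)}$ in $g\cdot z=(\overline x,\dots,\overline y^{(r)})$ depends only on $x,\dots,y^{(j)}$, so among all coordinates of $g\cdot z$ only $\overline y^{(r)}$ depends on $y^{(r)}$, and it does so with a nonzero derivative $\partial\overline y^{(r)}/\partial y^{(r)}$ (nonzero because the prolonged action is invertible, with inverse $g^{-1}$). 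Differentiating the invariance identity $K_2(g\cdot z)=K_2(z)$ in $y^{(r)}$ and applying the chain rule then gives
\beq\label{eq-chain-K2}
\frac{\partial K_2}{\partial y^{(r)}}\bigg|_{g\cdot z}\cdot\frac{\partial\overline y^{(r)}}{\partial y^{(r)}}\ =\ \frac{\partial K_2}{\partial y^{(r)}}\bigg|_{z},
\eeq
so $\partial K_2/\partial y^{(r)}$ vanishes at $z=j^r_X(p)$ exactly when it vanishes at $g\cdot z=j^r_Y(q)$; the same computation with $K_1$ and $y^{(k)}$ covers the constant case. Thus (c) transfers from $p$ to $q$, and combining (a)--(c) shows that cofinitely many $q\in Y$ are $\mathcal{I}$-regular, so $Y$ is non-exceptional. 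The one subtlety to watch is confirming $\partial\overline y^{(r)}/\partial y^{(r)}\not\equiv0$, which follows from the invertibility of the prolonged action but should be stated explicitly.
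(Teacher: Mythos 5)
Your proof is correct and follows essentially the same route as the paper's: conditions (a) and (b) are dispatched via the intertwining property \eqref{pr-g-comm} and the $G$-invariance of the separation domains, and condition (c) via the chain rule applied to the invariance identity $K(g\cdot z)=K(z)$ together with the triangular structure of the prolongation. The only cosmetic difference is that you make the nonvanishing of $\partial\overline{y}^{(r)}/\partial y^{(r)}$ explicit, whereas the paper only uses the one implication that follows from the chain-rule identity without needing that factor to be nonzero.
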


\begin{proof} We check that if conditions (a) -- (c) in Definition~\ref{Def:IReg} are satisfied  by all but finitely many points on $X$, then the same is true for $Y$.

(a)  Assume that there are  at most  finitely many points  $p\in X$, such that  $j^r_\Chi(p)$ is undefined (equivalently $F_y(p)= 0$, where $F$ is a defining polynomial of $X$). This is, in fact, true for any irreducible curve of  degree greater than 1.  Since the action of $G$  preserves these properties, there are   at most  finitely many points $p\in Y$, such that $j^r_Y(p)$ is undefined. 

(b) Assume that there are at most  finitely many points $p\in X$, such that $j_\Chi^{k}(p)\notin W_1$ and $j^r_\Chi(p)\notin W_2$. From the $G$-invariance of $W_1$ and $W_2$ and \eqref{pr-g-comm}, combined with the fact that $Y\backslash  ({g\cdot X})$ is a finite set,   it follows that there  are at most  finitely many points $p\in  Y$ such that $j_Y^{k}(p)\notin W_1$ and $j^r_Y(p)\notin W_2$.

(c)  We start by showing that if $K$ is a differential invariant of order $n$, then 
the set of points ${p^{(n)}}\in J^n$ where $\frac{\partial K}{\partial y^{(n)}}(p^{(n)})\neq 0$ is $G$-invariant.
  Since $K$ is invariant,   $K(p^{(n)})=K(g\cdot p^{(n)})$, whenever both sides are defined, and the differentiation with respect $y^{(n)}$using the chain rule yields:
\begin{align*}
&\frac{\partial K}{\partial y^{(n)}}\!\left(p^{(n)}\right)\\
&=\frac{\partial K}{\partial \overline{x}}\!\left(g\cdot p^{(n)}\right)\frac{\partial \overline{x}}{\partial y^{(n)}}\!\left(p^{(n)}\right)+\frac{\partial K}{\partial \overline{y}}\!\left(g\cdot p^{(n)}\right)\frac{\partial \overline{y}}{\partial y^{(n)}}\!\left(p^{(n)}\right)+\hdots +\frac{\partial K}{\partial \overline{y}^{(n)}}\!\left(g\cdot p^{(n)}\right)\frac{\partial \overline{y}^{(n)}}{\partial y^{(n)}}\!\left(p^{(n)}\right)\\
&=\frac{\partial K}{\partial \overline{y}^{(n)}}\!\left(g\cdot p^{(n)}\right)\frac{\partial \overline{y}^{(n)}}{\partial y^{(n)}}\!\left(p^{(n)}\right).
\end{align*}
The last equality follows from the fact that the functions $\overline{x}, \overline{y},$ and $\overline{y}^{(i)}$, given in Definition~\ref{def-prolong}, do not depend on $y^{(n)}$ for $i=1,\hdots,n-1$. Thus if $\frac{\partial K}{\partial y^{(n)}}(p^{(n)})\neq 0$, so does every point in the orbit of $p^{(n)}$.

Condition (c) states that, if $K_1$ is constant on $\Chi$, then  for all but finitely many $p\in X$, $\frac{\partial K_1}{\partial y^{k}}|_{j_\Chi^{k}(p)}\neq 0$, otherwise  for all but finitely many $p\in X$, $\frac{\partial K_2}{\partial y^{r}}|_{j_\Chi^{r}(p)}\neq 0$, where $k=\ord(K_1)$ and $r=\ord(K_2)$. Due to \eqref{pr-g-comm}, and $G$-invariance property showed above, the same is true for $Y$. 
\end{proof}

\subsection{Differential signatures of algebraic curves}\label{Sec:Sig} 
In this section, we define the signature map and signature curve and show that signatures characterize the equivalence classes of generic algebraic curves.  Throughout this section, we assume $G$ is an algebraic group with $\dim(G)=r$ and that $\{K_1, K_2\}$ 
are a classifying set of differential invariants with $\ord(K_1) = k < r = \ord(K_2)$, as described above. 

%%%%
\begin{definition}\label{def:sig}
Let $\I=\{K_1,K_2\}$ be a classifying set of rational differential invariants with respect to the action $G$, and let $\Chi\subset \C^2$ be a 
non-exceptional curve.
Then the rational map $\s_{\Chi}:\Chi\dashrightarrow \C^2$ with coordinates $(K_1|_{\Chi},K_2|_{\Chi})$ is called the \emph{signature map}.
The image of $\imS_{\Chi} = \s_{\Chi}(\Chi)$ is called the \emph{signature} of $\Chi$. 
\end{definition}
%%%%
Note that since $\Chi$ is irreducible, then the closure  $\overline{\imS_{\Chi}}$ is also an  irreducible variety of dimension 0 or 1. If  $\dim(\overline{\imS_{\Chi}})=0$, then it is a single point and, therefore, $\s_{\Chi}$ is a constant  map.   If $\dim(\overline{\imS_{\Chi}})=1$, then it is an irreducible planar curve, which we  call the {\it  signature curve} of $\Chi$.  An irreducible polynomial  vanishing on  $\overline{\imS_{\Chi}}$  is called a  {\it  signature polynomial} and is denoted by   $S_{\Chi}$ and it is unique up to scaling by a non-zero constant. %%%%
%%%%
\begin{proposition}\label{Prop:GroupEquiv1}
Assume that $X, Y\subset \C^2$ are $G$-equivalent and non-exceptional with respect to a classifying set of rational differential invariants $\mathcal{I}=\{K_1,K_2\}$. Then  $\overline{\imS_X}=\overline{\imS_Y}$.
\end{proposition}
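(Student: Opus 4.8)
The plan is to exploit the $G$-invariance of the classifying invariants $K_1,K_2$ together with the commutation property~\eqref{pr-g-comm} relating the jet of a curve to the jet of its image. Since $X$ and $Y$ are $G$-equivalent, I would fix $g\in G$ with $X=\overline{g\cdot Y}$. The heart of the argument is the pointwise identity $\s_X(g\cdot q)=\s_Y(q)$, which I would establish for every $q$ in a suitable nonempty Zariski-open subset $U\subseteq Y$; since $Y$ is irreducible, such a $U$ is automatically dense.

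To produce $U$, I would intersect the cofinite (or Zariski-open) loci on $Y$ where $g\cdot q$ is defined, where $j^r_Y(q)$ is defined, where $j^r_X(g\cdot q)$ is defined, and where the invariance identities for $K_1,K_2$ hold at the relevant jets. For $q\in U$ the point $g\cdot q$ lies in the image $g\cdot Y$, which is dense in $X=\overline{g\cdot Y}$, so the $r$-jet of $X$ at $g\cdot q$ coincides with the $r$-jet of $g\cdot Y$ at $g\cdot q$; applying~\eqref{pr-g-comm} to the curve $Y$ then gives $j^r_X(g\cdot q)=g\cdot j^r_Y(q)$. Using the $G$-invariance of each $K_i$, I obtain
\[
K_i|_X(g\cdot q)=K_i\bigl(g\cdot j^r_Y(q)\bigr)=K_i\bigl(j^r_Y(q)\bigr)=K_i|_Y(q),\qquad i=1,2,
\]
which is exactly the asserted identity $\s_X(g\cdot q)=\s_Y(q)$.

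With this identity in hand, I would finish by passing to closures. From $\s_Y(U)=\s_X(g\cdot U)\subseteq\imS_X$, together with the standard fact that the image of a dense subset under a dominant rational map is dense in the closure of the image, I get $\overline{\imS_Y}=\overline{\s_Y(U)}\subseteq\overline{\imS_X}$. Applying the same reasoning with the roles of $X$ and $Y$ exchanged (using $g^{-1}$ and $Y=\overline{g^{-1}\cdot X}$) yields the reverse inclusion, and hence $\overline{\imS_X}=\overline{\imS_Y}$. I expect the only delicate point to be the bookkeeping of domains: one must check that all the rational maps in play—the action of $g$, the jet maps $j^r_X$ and $j^r_Y$, and the invariance relations—are simultaneously defined on a common dense open subset of $Y$, and that replacing $g\cdot Y$ by its closure $X$ does not alter the jet computation. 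This is routine once non-exceptionality of $X$ and $Y$ and the rationality of the action are invoked, since each condition discards only finitely many points.
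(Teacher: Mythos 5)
Your argument is correct and is essentially the paper's own proof: both rest on the prolongation identity~\eqref{pr-g-comm} combined with the $G$-invariance of $K_1,K_2$ to get $\s_X(g\cdot q)=\s_Y(q)$ on a dense open subset, and then pass to Zariski closures. The only cosmetic difference is that you prove one inclusion and invoke symmetry with $g^{-1}$ for the other, while the paper concludes the equality of closures in a single step; your extra care about identifying $j^r_X$ with $j^r_{g\cdot Y}$ at points of the dense image is exactly what the paper encodes in~\eqref{pr-gX}.
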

%%%
\begin{proof}
If $X$ and $Y$ are $G$-equivalent, then there exists $g\in G$ such that $Y=\overline{g\cdot X}$. 
Due to the fundamental property of prolongation  \eqref{pr-g-comm}, we  have $j^r_Y(q)=g\cdot j^r_X(p)$, for any  $p\in X$ where $q=g\cdot p$ is defined. 
Since $K_1$ and $K_2$ are invariant,  we have 
$$K_1(j^r_X(p))=K_1(j^r_Y(q)) \text{ and } K_2(j^r_X(p))=K_2(j^r_Y(q)).$$
This implies  $\sigma_X(p)=\sigma_Y(q)$. Since $g\cdot p$ is defined for all but finitely many points in $X$ and $g\cdot X$ is dense in $Y$, this implies that $\overline{\imS_X}=\overline{\imS_Y}$.
\end{proof}
%%%%
We will gradually work towards proving the converse of the above statement, and thus   showing that the  signature polynomials characterize the equivalence classes of curves. We will also show  the relationship between the cardinality of the preimage of a generic  point under a signature map  and  the cardinality  of the symmetry group. For both of these results we need several lemmas.
%%%%
\begin{lemma} \label{ode-lemma0}  Let $\I=\{K_1,K_2\}$ be a classifying set of rational differential invariants with respect to the action $G$, and let $\Chi, Z \subset \C^2$ be two 
non-exceptional curves, such that the restrictions of $K_1$ to both curves equal to the same constant function:
$$K_1|_X=K_1|_Z=c.$$
 If there exists $p\in X \cap Z$ such that 
\begin{enumerate}
\item  $j^k_X(p)= j^k_Z(p)$, where $k=\ord(K_1)$,
\item $p$ is not exceptional for $X$,
\end{enumerate}
 then $X=Z$. 
\end{lemma}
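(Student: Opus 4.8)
The plan is to convert the hypothesis $K_1|_X = K_1|_Z = c$ into a single analytic ordinary differential equation of order $k$ satisfied by both curves near $p$, and then to combine uniqueness of solutions of analytic ODEs with the irreducibility of $X$ and $Z$ to force $X = Z$.

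First I would extract the consequence of hypothesis (2). Since $K_1$ is constant on $X$ and $p$ is $\I$-regular (i.e.\ not exceptional) for $X$, part (c) of Definition~\ref{Def:IReg} gives $\frac{\partial K_1}{\partial y^{(k)}}\big|_{j^k_X(p)} \neq 0$, where $k = \ord(K_1)$. By hypothesis (1), $j^k_X(p) = j^k_Z(p)$, so this nonvanishing holds at the common jet point. The implicit function theorem then lets me solve $K_1(x,y,y^{(1)},\dots,y^{(k)}) = c$ locally for $y^{(k)}$, producing an analytic function $\Psi$ with
$$ y^{(k)} = \Psi\big(x,y,y^{(1)},\dots,y^{(k-1)}\big) $$
on a neighborhood of $j^{k-1}_X(p)$. (When $k=0$ this degenerates to an implicit algebraic relation, and the uniqueness used below is just the implicit function theorem.)

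Next I would observe that both curves are graphs near $p$. The fact that $j^k_X(p)$ and $j^k_Z(p)$ are defined means $F_y(p)\neq 0$ for the respective defining polynomials, so on a neighborhood of $p$ I may write $X$ as $y = f(x)$ and $Z$ as $y = h(x)$ with $f,h$ analytic near $x = p_1$. By the definition of restriction, $K_1|_X = c$ says exactly that $K_1\big(x,f,f',\dots,f^{(k)}\big) \equiv c$, so $f$ solves the ODE $y^{(k)} = \Psi(\cdots)$; likewise $h$ solves the same ODE. Moreover $j^k_X(p) = j^k_Z(p)$ forces $f^{(j)}(p_1) = h^{(j)}(p_1)$ for $j = 0,\dots,k-1$, so $f$ and $h$ satisfy identical initial conditions at $x = p_1$.

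Finally I would invoke the uniqueness theorem for solutions of complex analytic ODEs \cite{In44} to conclude that $f = h$ in a neighborhood of $p_1$; hence $X$ and $Z$ agree along an analytic arc and therefore share infinitely many points. Since $X$ and $Z$ are irreducible and two distinct irreducible curves meet in only finitely many points, this is impossible unless $X = Z$. The crux of the argument is the first reduction: one must be sure that the constancy of $K_1$ together with $\I$-regularity yields a genuine order-$k$ ODE with nonvanishing leading term (rather than a degenerate relation) precisely at $p$, so that the ODE uniqueness theorem is applicable there. Once this normal form is secured, the remaining steps are routine.
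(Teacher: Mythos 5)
Your proposal is correct and follows essentially the same route as the paper's proof: non-exceptionality gives $\frac{\partial K_1}{\partial y^{(k)}}\big|_{j^k_X(p)}\neq 0$, the implicit function theorem converts $K_1=c$ into an explicit analytic order-$k$ ODE, the uniqueness theorem for complex ODEs forces the two local graphs to coincide, and irreducibility upgrades this to $X=Z$. The only addition is your aside on the degenerate case $k=0$, which the paper does not spell out but which does not change the argument.
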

%%%
\begin{proof} Since $p$ is non-singular for both $X$ and $Z$,  in some neighborhood of $p$, curves $X$ and $Z$ coincide with the graphs of analytic functions $y=f(x)$ and $y=g(x)$, respectively.
 Both  $y=f(x)$ and $y=g(x)$ are solutions of the differential equation 
\beq\label{eq-K1}K_1(x,y, \jy 1,\dots,\jy  k)=c,\eeq with the same initial condition described by the point $j^k_X(p)= j^k_Z(p)$.
Since $p$ is non-exceptional, $\frac{\partial K_1}{\partial y^{k}}|_{j_\Chi^{k}(p)}\neq 0$, and so using the implicit function theorem,  \eqref{eq-K1} can be rewritten as  $\jy k= H(x,y, \jy 1,\dots,\jy  {k-1})$ in a neighborhood of ${j_\Chi^{k}(p)}$, where $H$ is an analytic function of  the jet coordinates. We can now invoke the uniqueness  theorem for the solutions of complex ODEs \cite{In44} to conclude that $f(x)=g(x)$. Therefore $X$ and $Z$ coincide on a positive dimensional subset. Since they are irreducible $X=Z$.
\end{proof}
%%%%%%
\begin{lemma} \label{ode-lemma} Let $\I=\{K_1,K_2\}$ be a classifying set of rational differential invariants with respect to the action $G$, and let $\Chi, Z \subset \C^2$ be two 
non-exceptional curves with the same signature curves, $\overline{\imS_X}=\overline{\imS_Z}$.  If there exists $p\in X \cap Z$ such that 
\begin{enumerate}
\item  $j^r_X(p)= j^r_Z(p)$,
\item $p$ is not exceptional for $X$
\item if $\dim \imS_X>0$ and $S_X(\kappa_1,\kappa_2)$ is a signature polynomial, then $\frac{\partial S}{\partial \kappa_2}|_{\sigma_X(p)}\neq 0$,
\end{enumerate}
 then $X=Z$. 
\end{lemma}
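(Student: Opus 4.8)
The plan is to mimic the structure of Lemma~\ref{ode-lemma0}, separating the argument according to whether $\dim\overline{\imS_X}=0$ or $1$ and reducing in each case to the uniqueness theorem for complex ODEs. First I would dispatch the case $\dim\overline{\imS_X}=0$: here the signature is a single point, so $K_1|_X$ is a constant $c$, and since $\overline{\imS_Z}=\overline{\imS_X}$ is that same point, also $K_1|_Z\equiv c$. Because $j^r_X(p)=j^r_Z(p)$ and $k=\ord(K_1)<r$, projecting away the coordinates $y^{(k+1)},\dots,y^{(r)}$ gives $j^k_X(p)=j^k_Z(p)$; together with hypothesis~(2) this is exactly what Lemma~\ref{ode-lemma0} requires, so $X=Z$ follows immediately.

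The substantive case is $\dim\overline{\imS_X}=1$, where the irreducible signature polynomial $S_X(\kappa_1,\kappa_2)$ is defined. Since $j^r_X(p)=j^r_Z(p)$ is defined, $p$ is a smooth point with non-vertical tangent on both $X$ and $Z$, so near $p$ the two curves are graphs $y=f(x)$ and $y=g(x)$. The key observation is that $\overline{\imS_X}=\overline{\imS_Z}=V(S_X)$ forces both $f$ and $g$ to solve the single $r$-th order differential equation
$$S_X\bigl(K_1(x,y,\dots,y^{(k)}),\,K_2(x,y,\dots,y^{(r)})\bigr)=0,$$
with common initial data recorded by $j^r_X(p)=j^r_Z(p)$. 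To invoke ODE uniqueness I would solve this equation for $y^{(r)}$, which requires its $y^{(r)}$-partial to be nonzero at $j^r_X(p)$. By the chain rule, and using $\ord(K_1)=k<r$ so that $\partial K_1/\partial y^{(r)}=0$,
$$\frac{\partial}{\partial y^{(r)}}\,S_X(K_1,K_2)=\frac{\partial S_X}{\partial\kappa_2}\cdot\frac{\partial K_2}{\partial y^{(r)}}.$$

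It then remains to check that both factors are nonzero at $j^r_X(p)$, where $(K_1,K_2)$ evaluates to $\sigma_X(p)$. The first factor is nonzero by hypothesis~(3). For the second, I would first argue that $K_1$ is not constant on $X$: otherwise $\imS_X$ would lie in the vertical line $\{\kappa_1=c\}$, forcing $S_X=\kappa_1-c$ and hence $\partial S_X/\partial\kappa_2\equiv 0$, contradicting hypothesis~(3). With $K_1|_X$ nonconstant, condition~(c) of $\mathcal{I}$-regularity (Definition~\ref{Def:IReg}), which $p$ satisfies by hypothesis~(2), is precisely $\partial K_2/\partial y^{(r)}|_{j^r_X(p)}\neq 0$. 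Thus the product above is nonzero, and the implicit function theorem rewrites the equation locally as $y^{(r)}=H(x,y,\dots,y^{(r-1)})$ with $H$ analytic near $j^r_X(p)$.

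Finally, $f$ and $g$ both satisfy $y^{(r)}=H(\cdots)$ with identical initial values $y(x_0),\dots,y^{(r-1)}(x_0)$ at $x_0=x(p)$ read off from $j^r_X(p)=j^r_Z(p)$, so the uniqueness theorem for complex ODEs \cite{In44} gives $f=g$ near $x_0$; then $X$ and $Z$ agree on a positive-dimensional set and, being irreducible, coincide. The main obstacle is conceptual rather than computational: I must ensure that the case $\dim\overline{\imS_X}=1$ lands in the correct branch of the $\mathcal{I}$-regularity condition, i.e.\ that hypothesis~(3) genuinely excludes the degenerate vertical-line signature in which condition~(c) would instead control $K_1$. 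Once this is settled, the chain-rule identity and ODE uniqueness close the argument.
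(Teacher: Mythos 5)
Your proposal is correct and follows essentially the same route as the paper: dispatch the constant-signature case via Lemma~\ref{ode-lemma0}, and otherwise view $f$ and $g$ as solutions of $S\bigl(K_1,K_2\bigr)=0$ with common initial data, using hypothesis (3) together with condition (c) of $\mathcal{I}$-regularity to solve for $y^{(r)}$ and invoke ODE uniqueness. Your explicit chain-rule identity and the observation that hypothesis (3) rules out $K_1|_X$ being constant (so that condition (c) indeed controls $\partial K_2/\partial y^{(r)}$) make explicit a step the paper leaves implicit, but the argument is the same.
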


\begin{proof}  If $\sigma_X$ (and, therefore, $\sigma_Z$) is a constant map, then there exists $c\in\C$, such that $K_1|_X=c$ and $K_1|_Z=c$. Then we are in the situation of Lemma~\ref{ode-lemma0} and the conclusion follows.
Otherwise, $\sigma_X$  and, $\sigma_Z$  define the same signature polynomial
$S_X(\kappa_1,\kappa_2)=S_Z(\kappa_1,\kappa_2):=S(\kappa_1,\kappa_2)$.
 Since $p$ is non-singular for both $X$ and $Z$,  in some neighborhood of $p$, curves $X$ and $Z$ coincide with the graphs of analytic functions $y=f(x)$ and $y=g(x)$, respectively.
Both  $y=f(x)$ and $y=g(x)$ are solutions of the differential equation 
\beq\label{eq-S}S\left(K_1(x,y, \jy 1,\dots,\jy  k),K_2(x,y, \jy 1,\dots,\jy  r)\right)=0 \eeq
with the same initial condition described by the point $j^r_X(p)= j^r_Z(p)$.
By assumption, $\frac{\partial S}{\partial \kappa_2}|_{\sigma_X(p)}$ and $\frac{\partial K_2}{\partial y^{(r)}}|_{j^r_\Chi(p)}$ are both nonzero.
Then using the implicit function theorem,  \eqref{eq-S} can be rewritten as  $\jy r= H(x,y, \jy 1,\dots,\jy  {r-1})$ in a neighborhood of ${j_\Chi^{r}(p)}$, where $H$ is an analytic function of  the jet coordinates. 
As in the previous lemma, we invoke  the uniqueness  theorem for the solutions of  ODEs, to conclude   $X=Z$.
\end{proof}

\begin{lemma}\label{self-equiv} Let $\I=\{K_1,K_2\}$ be a classifying set of rational differential invariants with respect to the action $G$, and let $\Chi$ be a
non-exceptional curve.  Let $p,q\in X$ be two non-exceptional points, such that
\begin{enumerate}
\item  $\sigma_X(p)=\sigma_X(q)$
\item if $\dim \imS_X>0$ and $S_X(\kappa_1,\kappa_2)$ is the signature polynomial, then $\frac{\partial S}{\partial \kappa_2}|_{\sigma_X(p)}\neq 0$. 
\end{enumerate}
 Then  there exists $g\in \SymG$, such that $q=gp$. 

\end{lemma}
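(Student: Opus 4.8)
The plan is to reduce the statement about points $p,q$ on a single curve $X$ to the two-curve rigidity encoded in Lemma~\ref{ode-lemma}, by transporting $q$ via a group element into a position where the previous lemma applies. Since $\s_X(p)=\s_X(q)$, the jets $j^r_X(p)$ and $j^r_X(q)$ lie in the same fiber of the map to signature space, i.e. they agree after evaluating both invariants $K_1$ and $K_2$. By Theorem~\ref{th-gen} and Theorem~\ref{th-class}, the pair $\I=\{K_1,K_2\}$ generates $\C(J^r)^G$ and separates orbits on the open set $W_2\subset J^r$; because $p$ and $q$ are non-exceptional, both jets $j^r_X(p)$ and $j^r_X(q)$ lie in $W_2$. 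Hence the equality of invariants $K_1,K_2$ on these two jets forces them into the same $G$-orbit in $J^r$: there exists $g\in G$ with
\beq\label{eq-jet-orbit}
j^r_X(q)\ =\ g\cdot j^r_X(p).
\eeq
This is the crucial extraction of a group element from the coincidence of signature values, and it is where the separation property of a classifying set is used in an essential way.

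Next I would set $Z=\overline{g\cdot X}$ and show that $X$ and $Z$ satisfy the hypotheses of Lemma~\ref{ode-lemma} at the point $q$. By the fundamental prolongation property \eqref{pr-g-comm} and \eqref{pr-gX}, we have $j^r_Z(q)=g\cdot j^r_X(p)$, which by \eqref{eq-jet-orbit} equals $j^r_X(q)$; this verifies hypothesis (1) of Lemma~\ref{ode-lemma}. Because $q$ is a non-exceptional point of $X$, hypothesis (2) holds. By Lemma~\ref{lem-non-except}, $Z=\overline{g\cdot X}$ is again non-exceptional, and by Proposition~\ref{Prop:GroupEquiv1} the $G$-equivalent curves $X$ and $Z$ share the same signature curve, $\overline{\imS_X}=\overline{\imS_Z}$, so the standing hypothesis of Lemma~\ref{ode-lemma} is met. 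Finally, hypothesis (2) of the present lemma gives $\frac{\partial S}{\partial \kappa_2}|_{\s_X(q)}=\frac{\partial S}{\partial \kappa_2}|_{\s_X(p)}\neq 0$ (using $\s_X(p)=\s_X(q)$), supplying hypothesis (3) of Lemma~\ref{ode-lemma}.

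Applying Lemma~\ref{ode-lemma} then yields $X=Z=\overline{g\cdot X}$, which by definition means $g\in\SymG$. It remains to conclude that $q=g\cdot p$. From \eqref{eq-jet-orbit}, projecting the jet equality to the underlying $J^0=\C^2$ coordinates gives that the base points agree, i.e. $g\cdot p=q$ as points of $\C^2$ (the prolonged action fixes the $(x,y)$-coordinates of the image of $p$ under $g$). Thus $g\in\SymG$ and $q=g\cdot p$, as claimed.

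The main obstacle I anticipate is the rigorous justification of \eqref{eq-jet-orbit}: passing from ``$K_1,K_2$ take equal values on two jets'' to ``the two jets lie in a single $G$-orbit.'' This requires knowing that both jets lie in the maximal domain of separation $W_2$ for $\I$, which is exactly guaranteed by non-exceptionality of $p$ and $q$ (Definition~\ref{Def:IReg}(b)), and that on $W_2$ equality of all classifying invariants is equivalent to lying in the same orbit (the defining property of a separating set, Proposition~\ref{inv-field-prop}, part 2). A secondary subtlety is handling the constant case $\dim\imS_X=0$: there $K_2$ carries no separating information, but then $K_1|_X$ is constant and condition (c) of $\mathcal I$-regularity shifts to $K_1$, so one uses that $\{K_1\}$ separates orbits on $W_1\subset J^k$ together with the order-$k$ jets — an argument that parallels the split between Lemma~\ref{ode-lemma0} and Lemma~\ref{ode-lemma}. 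Care is needed to ensure the element $g$ produced by separation genuinely sends the base point $p$ to $q$ and not merely the higher jets, but this follows because the prolonged action is built to be compatible across all jet orders via \eqref{pr-g-comm}.
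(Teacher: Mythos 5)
Your proposal is correct and follows essentially the same route as the paper's proof: use the separating property of the classifying set on the maximal domain of separation (guaranteed to contain both jets by non-exceptionality) to extract $g\in G$ relating $j^r_X(p)$ and $j^r_X(q)$, form $Z=\overline{g\cdot X}$, verify the hypotheses of Lemma~\ref{ode-lemma} via Lemma~\ref{lem-non-except}, Proposition~\ref{Prop:GroupEquiv1} and the prolongation property \eqref{pr-g-comm}, and conclude $X=Z$ so that $g\in\SymG$. The only cosmetic difference is the direction of $g$ (the paper obtains $p=g\cdot q$ and works at the point $p$), and your worry about the constant-signature case is already absorbed into Lemma~\ref{ode-lemma}, which reduces to Lemma~\ref{ode-lemma0} internally.
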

\begin{proof} Since, $\sigma_X(p)=\sigma_X(q)$ we have
$$K_1(j^r_X(p))=K_1(j^r_X(q)) \text{ and } K_2(j^r_X(p))=K_2(j^r_X(q)).$$
 Since $\I$ is a separating set,  and $p$ and $q$ are non-exceptional,  there exists
$g\in G$, such that $j^r_X(p) =g\cdot j^r_X(q)$. Consider a curve $Z=\overline{g\cdot X}$. By Lemma~\ref{lem-non-except}, $Z$ is non-exceptional.
Condition $\overline{\imS_X}=\overline{\imS_Z}$ holds due to Proposition~\ref{Prop:GroupEquiv1}.
Due to the fundamental property of prolongation \eqref{pr-g-comm}  we have $j^r_Z(p) =g\cdot j^r_X(q)$. This implies     $p=g\cdot q\in Z$   and  $j^r_Z(p) =j^r_X(p)$.    We  verified that $X$ and $Z$ satisfy all conditions of Lemma~\ref{ode-lemma}. Then $X=Z=\overline{g\cdot X}$ and, therefore $g\in \SymG$.
\end{proof}

\begin{lemma} 
\label{Lem:ZeroDimSig}
Suppose that $\Chi$ is a non-exceptional curve with respect to a classifying set of rational differential invariants $\mathcal{I}=\{K_1,K_2\}$. Then the following are equivalent:

\begin{itemize}
\item[(1)] $K_1|_\Chi$ is a constant function on $X$,
\item[(2)] $H=\Sym(X,G)$ is infinite,
\item[(3)] the signature ${\imS}_{X}$ consists of a single point.
\end{itemize}
\end{lemma}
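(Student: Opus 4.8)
The plan is to establish the cycle of implications $(3)\Rightarrow(1)\Rightarrow(2)\Rightarrow(3)$. The step $(3)\Rightarrow(1)$ is immediate: $K_1|_X$ is the first coordinate of the signature map $\sigma_X$, so if $\overline{\imS_X}$ is a single point then $K_1|_X$ is constant. For $(2)\Rightarrow(3)$ I would invoke Proposition~\ref{pr:InfOrbit}: if $H=\Sym(X,G)$ is infinite, there is a point $p\in X$ whose $H$-orbit is dense in $X$. For each $g\in H$ with $g\cdot p$ defined we have $g\cdot p\in X$, and the fundamental property \eqref{pr-g-comm} together with the $G$-invariance of $K_1,K_2$ gives $\sigma_X(g\cdot p)=\sigma_X(p)$. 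Hence $\sigma_X$ is constant on the dense set $Hp$, so it is constant on $X$ and $\overline{\imS_X}$ is a single point.

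The main work, and the main obstacle, is $(1)\Rightarrow(2)$: turning the constancy of $K_1|_X$ into an infinitude of symmetries. The key idea is to use the ODE-uniqueness Lemma~\ref{ode-lemma0} to upgrade a \emph{jet-level} $G$-equivalence into a genuine self-equivalence of $X$. Concretely, fix a generic non-exceptional point $p\in X$ and write $c=K_1|_X$. For any other generic non-exceptional $p'\in X$, both jets $j^k_X(p)$ and $j^k_X(p')$ lie in the separation domain $W_1\subseteq J^k$ (Definition~\ref{Def:IReg}) and satisfy $K_1\bigl(j^k_X(p)\bigr)=K_1\bigl(j^k_X(p')\bigr)=c$. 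Since $\{K_1\}$ \emph{separates} orbits on $W_1$, these two jets lie in a single $G$-orbit, so there exists $g\in G$ with $g\cdot j^k_X(p)=j^k_X(p')$; comparing base coordinates yields $g\cdot p=p'$.

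I then apply Lemma~\ref{ode-lemma0} to the pair $X$ and $Z=\overline{g\cdot X}$. The curve $Z$ is non-exceptional by Lemma~\ref{lem-non-except}, it satisfies $K_1|_Z=c$ by $G$-invariance of $K_1$ (cf.\ Proposition~\ref{Prop:GroupEquiv1}), and by \eqref{pr-g-comm} it shares with $X$ the $k$-jet at the non-exceptional point $p'=g\cdot p$, i.e.\ $j^k_Z(p')=g\cdot j^k_X(p)=j^k_X(p')$. Lemma~\ref{ode-lemma0} then forces $X=Z=\overline{g\cdot X}$, so $g\in\Sym(X,G)$ is a symmetry carrying $p$ to $p'$. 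Letting $p'$ range over the infinitely many generic non-exceptional points of $X$ produces infinitely many distinct elements of $\Sym(X,G)$ (distinct because they send the fixed point $p$ to distinct points $p'$), so $\Sym(X,G)$ is infinite, completing the cycle.

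I expect the remaining issues to be bookkeeping rather than conceptual: verifying that generic $p,p'$ can be chosen to simultaneously meet all conditions of Definition~\ref{Def:IReg}, that $\overline{g\cdot X}$ agrees with $g\cdot X$ near the generic point $p'$ so that the jet identity $j^k_Z(p')=g\cdot j^k_X(p)$ is legitimate, and that the single-orbit conclusion genuinely uses the separating property of $K_1$ (rather than merely its generating the field $\C(J^k)^G$). The conceptual heart is the observation that constancy of the \emph{lowest-order} classifying invariant $K_1$ makes every pair of generic $k$-jets of $X$ lie in one $G$-orbit, and that Lemma~\ref{ode-lemma0} converts this into actual self-equivalences.
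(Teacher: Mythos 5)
Your proposal is correct and follows essentially the same route as the paper: the same cycle of implications, with $(1)\Rightarrow(2)$ driven by the separating property of $K_1$ on $W_1$ plus the ODE-uniqueness Lemma~\ref{ode-lemma0} applied to $X$ and $Z=\overline{g\cdot X}$ (using Lemma~\ref{lem-non-except}, Proposition~\ref{Prop:GroupEquiv1} and \eqref{pr-g-comm}), and $(2)\Rightarrow(3)$ via the dense orbit from Proposition~\ref{pr:InfOrbit}. The only cosmetic difference is that you conclude $|\Sym(X,G)|=\infty$ by exhibiting infinitely many symmetries moving a fixed $p$ to distinct points, whereas the paper re-invokes Proposition~\ref{pr:InfOrbit} to pass from the dense orbit $Hp$ back to infinitude; both are valid.
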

%%%
\begin{proof}
$ (1)\Rightarrow (2)$  Assume $K_1|_\Chi=c$ is a constant function on $X$.  Fix a non-exceptional point $p$.
We will show that any  non-exceptional point on $\Chi$ belongs to the orbit $Hp$. Since non-exceptional points are dense in $X$,  the conclusion would follow from Proposition~\ref{pr:InfOrbit}.

Let $q$ be a non-exceptional point on X. Then $K_1(j^k_X(p))=K_1(j^k_X(q))=c$ where $k$ equals $\ord(K_1)$.
Since $K_1$ is separating on $J^k$, there exists $g\in G$, such that $j^k_X(p)=g\cdot j^k_X(q)$. Consider a curve $Z=\overline{g\cdot X}$. By Lemma~\ref{lem-non-except}, $Z$ is non-exceptional.
Condition $\overline{\imS}_X=\overline{\imS}_Z$ holds due to Proposition~\ref{Prop:GroupEquiv1}. Therefore $K_1|_Z$ is the same constant function as $K_1|_X$.
Due to the fundamental property of prolongation \eqref{pr-g-comm}  we have $j^r_Z(p) =g\cdot j^r_X(q)$. This implies     $p=g\cdot q\in Z$   and  $j^r_Z(p) =j^r_X(p)$.    We  verified that $X$ and $Z$ satisfy all conditions of Lemma~\ref{ode-lemma0}. Then $X=Z=\overline{g\cdot X}$ and, therefore $g\in H$ and so $q\in Hp$.

$ (2)\Rightarrow (3)$ Let $p$ be a non-exceptional point.   For any $q\in Hp$, there exists $g\in H$, such that $p=g\cdot q$ and $X=g\cdot X$.  If $q$ is non-exceptional, it follows from \eqref{pr-g-comm} that $j^k_X(p)=g\cdot j^k_X(q)$. Since $K_1$ is a differential invariant, $K_1|_X(g\cdot j^k_X(q))=K_1|_X( j^k_X(q))$. Then 
$$K_1|_X(j^k_X(p))=K_1|_X(j^k_X(q)) \text{ for all non-exceptional } q\in H_p. $$

Since $H$ is infinite, from  Proposition~\ref{pr:InfOrbit}, it follows  the orbit $Hp$ is dense in $\Chi$. The set of non-exceptional points is also dense in $\Chi$. Thus  $K_1|_X$ is a constant rational function on a dense subset of $\Chi$ and, therefore, is constant on $\Chi$. 

$ (3)\Rightarrow (1)$ Obvious from the definition  of ${\imS}_{X}$.
\end{proof}

We are now ready to prove the converse of the Proposition~\ref{Prop:GroupEquiv1}.
%%%%
\begin{proposition}\label{Prop:GroupEquiv2} If algebraic curves $X, Y\subset \C^2$ are non-exceptional with respect to a classifying set of rational differential invariants $\mathcal{I}=\{K_1,K_2\}$ under an action of $G$ on $\C^2$ and their signature curves  are equal,  $\overline{\imS_X}=\overline{\imS_Y}$, then $X$ and $Y$ are $G$-equivalent.
 \end{proposition}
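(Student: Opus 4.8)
The plan is to reverse the argument of Proposition~\ref{Prop:GroupEquiv1}: starting from a single sufficiently generic point where the two signatures agree, I will manufacture a group element $g\in G$ carrying a jet of $Y$ onto a jet of $X$, and then invoke the ODE-uniqueness Lemma~\ref{ode-lemma} to promote this pointwise jet-coincidence to the global identity $X=\overline{g\cdot Y}$, which is exactly $G$-equivalence.

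First I would fix the common irreducible variety $V:=\overline{\imS_X}=\overline{\imS_Y}$ and choose a generic point $s\in V$. Since the images $\imS_X$ and $\imS_Y$ are each Zariski-dense in $V$ and are swept out by the $\I$-regular points of $X$ and $Y$ (a cofinite set on each non-exceptional curve), a generic $s$ is simultaneously $s=\sigma_X(p)=\sigma_Y(q)$ for non-exceptional points $p\in X$ and $q\in Y$; by Definition~\ref{Def:IReg}(b) the jets $j^r_X(p)$ and $j^r_Y(q)$ then lie in the domain of separation $W_2\subset J^r$ for $\I$. The equality $\sigma_X(p)=\sigma_Y(q)$ says $K_1$ and $K_2$ take equal values on these two jets, so the defining separation property of the classifying set $\I$ on $W_2$ (Definition~\ref{def-dsep}) produces $g\in G$ with $j^r_X(p)=g\cdot j^r_Y(q)$. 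Setting $Z=\overline{g\cdot Y}$, Lemma~\ref{lem-non-except} shows $Z$ is non-exceptional and Proposition~\ref{Prop:GroupEquiv1} gives $\overline{\imS_Z}=\overline{\imS_Y}=V$. The prolongation identity \eqref{pr-g-comm} yields $j^r_Z(g\cdot q)=g\cdot j^r_Y(q)=j^r_X(p)$; comparing $0$-jets forces $g\cdot q=p$, so $p\in X\cap Z$ and $j^r_X(p)=j^r_Z(p)$. Conditions (1) and (2) of Lemma~\ref{ode-lemma} are then in place, and that lemma delivers $X=Z=\overline{g\cdot Y}$, i.e. $X\underset{G}{\cong}Y$.

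The step I expect to be the real obstacle is verifying hypothesis (3) of Lemma~\ref{ode-lemma}: when $\dim V=1$ I must be able to pick the generic point $s$ so that $\frac{\partial S}{\partial\kappa_2}(s)\neq 0$ for the common signature polynomial $S=S_X=S_Y$. This reduces to showing $\frac{\partial S}{\partial\kappa_2}$ does not vanish identically on the irreducible curve $V=V(S)$. If $K_2|_X$ is non-constant then $\deg_{\kappa_2}S\geq 1$, so in characteristic $0$ the derivative $\frac{\partial S}{\partial\kappa_2}$ is a nonzero polynomial of strictly smaller $\kappa_2$-degree than $S$; since $S$ is irreducible it cannot divide its own partial derivative, and hence $\frac{\partial S}{\partial\kappa_2}$ is nonzero on a dense open subset of $V$. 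The one remaining worry is the degenerate case where $K_1|_X$ is constant but $K_2|_X$ is not, which would give $S=\kappa_1-c$ and make condition (3) fail; but Lemma~\ref{Lem:ZeroDimSig} rules this out, since $K_1|_X$ constant already forces the signature to be a single point, contradicting $\dim V=1$. When $\dim V=0$, condition (3) of Lemma~\ref{ode-lemma} is vacuous and the whole argument collapses to an application of Lemma~\ref{ode-lemma0} via the $J^k$-separation of $K_1$. Thus genericity of $s$ can be arranged to meet all hypotheses of Lemma~\ref{ode-lemma}, completing the proof.
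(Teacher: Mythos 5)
Your proposal is correct and takes essentially the same approach as the paper's proof: choose a generic point of the common signature curve with non-exceptional preimages $p\in X$ and $q\in Y$, use the separating property of $\I$ on $J^r$ to produce $g$ with $j^r_X(p)=g\cdot j^r_Y(q)$, pass to $Z=\overline{g\cdot Y}$ via Lemma~\ref{lem-non-except} and Proposition~\ref{Prop:GroupEquiv1}, and conclude $X=Z$ from Lemma~\ref{ode-lemma} (reducing to Lemma~\ref{ode-lemma0} when the signature is a point). Your more detailed verification of hypothesis (3) --- irreducibility of $S$ together with Lemma~\ref{Lem:ZeroDimSig} ruling out $\partial S/\partial\kappa_2\equiv 0$ --- is precisely the paper's (terser) opening argument.
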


\begin{proof}  
 Then $\imS:=\overline{\imS_X}=\overline{\imS_Y}$ is an irreducible curve, and let $S(\kappa_1,\kappa_2)$ be its defining polynomial.
   If $\pd{S}{\kappa_2}$ were identically zero, then $K_1|_X$ would  be constant and Lemma~\ref{Lem:ZeroDimSig}  would imply that $\imS$ is a single point. Therefore $\pd{S}{\kappa_2}|_s$ is nonzero for all but finitely many points $s\in \imS$. Moreover, since $X$ and $Y$ are non-exceptional, for all but finitely many such points $s\in \imS$,  none of the points in the preimage $\sigma_X^{-1}(s)$ are exceptional in $X$ and none of the points in the preimage $\sigma_Y^{-1}(s)$ are exceptional in $Y$.  By Chevalley's Theorem (see e.g. \cite[Thm. 3.16]{Harris}), the images $\imS_X$ and $\imS_Y$ are constructible sets and thus all but at most finitely many points of their Zariski closure $\imS$. We fix a point $s\in\imS$ with these desired properties, a point $p\in \sigma_X^{-1}(s)$ and a point $q\in \sigma_Y^{-1}(s)$. Otherwise $\overline{\imS_X}$ (and, therefore, $\overline{\imS_Y}$) is a single point, and we let $p$ and $q$ be any non-exceptional points on $X$ and $Y$, respectively.
   %%%%

In both cases, $\sigma_X(p)=\sigma_Y(q)$, meaning that 
$$K_1(j^r_X(p))=K_1(j^r_Y(q)) \ \text{ and } \ K_2(j^r_X(p))=K_2(j^r_Y(q)).$$
 Since $\I$ is separating  and $p$ and $q$ are non-exceptional,  there exists a group element
$g\in G$ for which $j^r_X(p)$ equals $g\cdot j^r_Y(q)$.

Consider a curve $Z=\overline{g\cdot Y}$.
By Lemma~\ref{lem-non-except}, $Z$ is non-exceptional.
Condition $\overline{\imS_X}=\overline{\imS_Z}$ holds due to Proposition~\ref{Prop:GroupEquiv1}.
Due to the fundamental property of prolongation \eqref{pr-g-comm},  we have $j^r_Z(p) =g\cdot j^r_X(q)$.  Therefore,  $p=g\cdot q\in Z$ and  $j^r_Z(p) =j^r_X(p)$.    We  verified that $X$ and $Z$ satisfy all conditions of Lemma~\ref{ode-lemma}. Then $X=Z=\overline{g\cdot Y}$.
\end{proof}
%%%
Combining Lemma~\ref{Lem:ZeroDimSig}  with  Propositions~\ref{Prop:GroupEquiv1} and~\ref{Prop:GroupEquiv2} we get the following corollary.%%%%
\begin{corol}  If   $X$ and $Y$ have a finite symmetry group, then 
$X$ and $Y$ are $G$-equivalent if and only if their signature polynomials $S_X, S_Y$ are equal up to a   non-zero constant factor.
\end{corol}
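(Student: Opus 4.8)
The plan is to assemble this corollary directly from the three results that precede it, with no new machinery required. The corollary is a biconditional, so I would prove the two directions separately, and the key observation is that the finiteness of the symmetry group is exactly what lets me upgrade the ``$\overline{\imS_X}=\overline{\imS_Y}$'' language of Propositions~\ref{Prop:GroupEquiv1} and~\ref{Prop:GroupEquiv2} into the ``signature polynomials agree up to scaling'' language of the statement.

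First I would handle the forward direction. Suppose $X$ and $Y$ are $G$-equivalent. By Proposition~\ref{Prop:GroupEquiv1}, their signatures satisfy $\overline{\imS_X}=\overline{\imS_Y}$. Now I invoke Lemma~\ref{Lem:ZeroDimSig}: since the symmetry groups are finite, condition~(2) of that lemma fails for each curve, so by the equivalence~(2)$\Leftrightarrow$(3) the signatures $\imS_X$ and $\imS_Y$ are \emph{not} single points, i.e.\ $\overline{\imS_X}$ and $\overline{\imS_Y}$ are genuine irreducible signature curves of dimension one. An irreducible signature polynomial $S_X$ is, by the definition in Section~\ref{Sec:Sig}, the defining polynomial of $\overline{\imS_X}$, unique up to a nonzero constant. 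Since $\overline{\imS_X}=\overline{\imS_Y}$ are the same irreducible curve, their defining polynomials $S_X$ and $S_Y$ must coincide up to a nonzero scalar. This is the only place where I must be slightly careful: I should note explicitly that the zero-dimensional case is excluded by the finiteness hypothesis, so that ``signature polynomial'' is well-defined for both curves and the uniqueness-up-to-scaling of the defining polynomial of an irreducible plane curve applies.

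For the converse, suppose $S_X$ and $S_Y$ agree up to a nonzero constant factor. Then they have the same zero set, so $\overline{\imS_X}=\overline{\imS_Y}$ as irreducible curves. (Again finiteness of the symmetry groups, via Lemma~\ref{Lem:ZeroDimSig}, guarantees we are in the genuine curve case rather than the degenerate single-point case, though for this direction one really only needs that equal defining polynomials give equal zero sets.) With the hypothesis that both $X$ and $Y$ are non-exceptional with respect to $\mathcal I$ and $\overline{\imS_X}=\overline{\imS_Y}$, Proposition~\ref{Prop:GroupEquiv2} immediately yields that $X$ and $Y$ are $G$-equivalent.

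I do not anticipate a genuine obstacle here, since both propositions and the lemma are already in hand; the corollary is essentially a translation between two equivalent formulations of ``same signature.'' The one subtlety worth flagging in the write-up is the role of the finiteness assumption: it is precisely what rules out the single-point signature via Lemma~\ref{Lem:ZeroDimSig}, thereby ensuring the signature \emph{polynomial} exists and the phrase ``equal up to a nonzero constant factor'' is meaningful and equivalent to equality of the signature curves. I would state this bridging fact once at the start and then let the two directions follow in a couple of lines each.
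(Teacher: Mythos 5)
Your proposal is correct and matches the paper's argument: the paper gives no separate proof, simply stating that the corollary follows by combining Lemma~\ref{Lem:ZeroDimSig} with Propositions~\ref{Prop:GroupEquiv1} and~\ref{Prop:GroupEquiv2}, which is exactly the assembly you carry out. Your explicit remark that finiteness of the symmetry group (via Lemma~\ref{Lem:ZeroDimSig}) is what makes the signature polynomial well-defined and makes ``equal up to a nonzero constant'' equivalent to equality of signature curves is precisely the bridging observation the paper leaves implicit.
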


%%%%
We are finally ready to state  the first main result of the paper about the existence of a pair of classifying invariants    characterizing  the equivalence classes of generic irreducible algebraic curves:

\begin{theorem}\label{Thm:ClassMain}  Let $r$-dimensional  group $G\subset \PGL(3)$ act on $\mathbb{C}^2$. Then there exists a  pair of  differential invariants $\mathcal{I}=\{K_1,K_2\}$ of  differential order at most $r$, such that  for all integers $d$, where    $ \binom{d+2}{2} -2\geq  r$, there exists a Zariski open subset $\mathcal P_d\subset \C[x,y]_{\leq d}$ such that any curves  $X,Y$  whose defining polynomials lie in $\mathcal P_d$ satisfy:  
\beq\label{eq-th-class-main}X\underset G{\cong}Y\quad\Longleftrightarrow\quad  \overline{\imS_X}=\overline{\imS_Y},\eeq
where $\imS_X$ and $\imS_Y$ are signatures of $X$ and $Y$ based on invariants $\I$, as given by Definition~\ref{def:sig}.
\end{theorem}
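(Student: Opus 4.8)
The plan is to assemble the theorem directly from the structural results already proved, so that the only genuine work is the construction of the Zariski-open set $\mathcal P_d$ and the verification that it is nonempty. First I would fix the pair of invariants once and for all, independently of $d$: by Theorem~\ref{th-class} there exists a classifying set $\mathcal I=\{K_1,K_2\}$ for the action of $G$, and by Theorem~\ref{th-gen} we may take $\ord(K_1)=k<r$ and $\ord(K_2)=r$, so that both invariants have differential order at most $r=\dim G$, exactly as the statement demands.

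Next I would construct $\mathcal P_d$ for each $d$ satisfying $\binom{d+2}{2}-2\geq r$. By Theorem~\ref{thm-non-excep-generic}, the set of $F\in\C[x,y]_{\leq d}$ whose zero locus is a non-exceptional curve with respect to $\mathcal I$ contains a nonempty Zariski-open subset $\mathcal U_d$; this is precisely where the hypothesis $\binom{d+2}{2}-2\geq r$ enters, through Lemma~\ref{lem:JetGen}. Separately, the polynomials that are irreducible of degree exactly $d$ form a nonempty Zariski-open subset $\mathcal V_d$, since reducibility and the vanishing of the top-degree part are closed conditions. Setting $\mathcal P_d=\mathcal U_d\cap\mathcal V_d$ yields a nonempty Zariski-open set all of whose members define irreducible, non-exceptional curves of degree $d$; for $d\geq 2$ these have degree $>1$ and hence satisfy Assumption~\ref{assum1}, which is what the downstream propositions require.

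Finally I would prove the biconditional \eqref{eq-th-class-main} for curves $X=V(F)$ and $Y=V(H)$ with $F,H\in\mathcal P_d$. Both are then irreducible of degree $>1$ and non-exceptional with respect to $\mathcal I$, so both Proposition~\ref{Prop:GroupEquiv1} and Proposition~\ref{Prop:GroupEquiv2} apply verbatim. The forward direction, $X\underset G{\cong}Y\Rightarrow\overline{\imS_X}=\overline{\imS_Y}$, is immediate from Proposition~\ref{Prop:GroupEquiv1}; the reverse direction, $\overline{\imS_X}=\overline{\imS_Y}\Rightarrow X\underset G{\cong}Y$, is exactly the content of Proposition~\ref{Prop:GroupEquiv2}.

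The honest assessment is that there is no substantial obstacle remaining at this stage: the analytic core of the argument --- uniqueness for solutions of complex ODEs, the separation property of classifying invariants, and the genericity of non-exceptionality --- has already been discharged in Lemmas~\ref{ode-lemma0}--\ref{Lem:ZeroDimSig} and in Theorem~\ref{thm-non-excep-generic}. The one place that requires care, and which I would treat as the ``hard part'' of the write-up, is the bookkeeping in the definition of $\mathcal P_d$: one must ensure it simultaneously enforces irreducibility, degree $>1$, and non-exceptionality, so that the two equivalence propositions can be invoked for \emph{every} pair of curves drawn from $\mathcal P_d$ without re-checking hypotheses.
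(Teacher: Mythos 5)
Your proposal is correct and follows essentially the same route as the paper: fix a classifying pair via Theorem~\ref{th-class}, invoke Theorem~\ref{thm-non-excep-generic} to get a nonempty Zariski-open set of non-exceptional curves, and conclude the biconditional from Propositions~\ref{Prop:GroupEquiv1} and~\ref{Prop:GroupEquiv2}. The only difference is that you make explicit the intersection with the open set of irreducible degree-$d$ polynomials, a detail the paper leaves implicit.
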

%%%
\begin{proof} From Theorem~\ref{th-class}  we know that there exists a classifying set $\I$ of rational differential invariants of order at most $r$. By   Propositions~\ref{Prop:GroupEquiv1} and~\ref{Prop:GroupEquiv2}, the statement \eqref{eq-th-class-main} is valid for all $\I$-non-exceptional curves.  By  Theorem~\ref{thm-non-excep-generic}, for any $d$, such that  $ \binom{d+2}{2} -2\geq r$, there exists a Zariski open subset $\mathcal P_d\subset \C[x,y]_{\leq d}$, such that all curves whose defining polynomials  lie in $\mathcal P_d$ are non-exceptional.
\end{proof}
The next theorem  establishes an important relationship between the size of the symmetry group of an algebraic curve  and some properties of  its signature map. This result plays a crucial role in our degree formula derived in the  next sections.  
%%%
\begin{theorem}\label{Thm:NToOne}
Suppose that $\Chi$ is a non-exceptional curve with respect to a classifying set of rational differential invariants $\mathcal{I}=\{K_1,K_2\}$ for action $G$. Then $|\Sym(X,G)|=n$ if and only if the map $\s_{\Chi}$ is generically $n:1$.
\end{theorem}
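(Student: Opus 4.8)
The plan is to prove both directions using the machinery developed in the preceding lemmas, splitting into the two cases governed by Lemma~\ref{Lem:ZeroDimSig}. First I would dispose of the degenerate case: if $|\Sym(X,G)|$ is infinite, then by Lemma~\ref{Lem:ZeroDimSig} the signature $\imS_X$ is a single point, so $\s_\Chi$ is a constant map, which is not generically $n:1$ for any finite $n$; conversely, if $\s_\Chi$ is generically $n:1$ for a finite $n$, then $\imS_X$ is positive-dimensional, so again by Lemma~\ref{Lem:ZeroDimSig} the symmetry group is finite. Thus it suffices to prove the equivalence under the standing assumption that $H=\Sym(X,G)$ is finite (say $|H|=n$) and that $\dim\imS_X=1$, in which case I must show the generic fiber of $\s_\Chi$ has exactly $n$ points.

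The core of the argument is to exhibit a bijection, for a generic point $s\in\imS_X$, between the fiber $\s_\Chi^{-1}(s)$ and an orbit $Hp$ of the symmetry group. I would first restrict to a ``good'' Zariski-open subset of $\imS_X$: by the same reasoning as in the proof of Proposition~\ref{Prop:GroupEquiv2}, all but finitely many $s\in\imS_X$ satisfy $\frac{\partial S}{\partial\kappa_2}|_s\neq 0$, have all preimages under $\s_\Chi$ non-exceptional, and lie in the constructible image $\imS_X$ (via Chevalley). By Proposition~\ref{pr:FOrbit}, I may further restrict so that the chosen base point $p\in\s_\Chi^{-1}(s)$ has trivial stabilizer, so that $|Hp|=n$. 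The claim is then $\s_\Chi^{-1}(s)=Hp$. For the inclusion $Hp\subseteq\s_\Chi^{-1}(s)$, each $g\in H$ satisfies $X=\overline{g\cdot X}$, so by the fundamental property~\eqref{pr-g-comm} we have $j^r_X(gp)=g\cdot j^r_X(p)$, and $G$-invariance of $K_1,K_2$ gives $\s_\Chi(gp)=\s_\Chi(p)=s$. For the reverse inclusion, given any non-exceptional $q\in\s_\Chi^{-1}(s)$, the hypotheses of Lemma~\ref{self-equiv} are met (both points map to the same $s$, and condition~(2) holds because $\frac{\partial S}{\partial\kappa_2}|_s\neq 0$), so there exists $g\in H=\Sym(X,G)$ with $q=gp$, i.e.~$q\in Hp$.

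Having established $\s_\Chi^{-1}(s)=Hp$ for generic $s$, the count $|\s_\Chi^{-1}(s)|=|Hp|=n$ follows immediately, proving that $\s_\Chi$ is generically $n:1$. For the converse direction (finite case), if $\s_\Chi$ is generically $n:1$, the same fiber identification shows $|Hp|=n$ for generic $p$, and since the generic orbit has trivial stabilizer (Proposition~\ref{pr:FOrbit}) this forces $|H|=n$. The main obstacle I anticipate is the careful bookkeeping needed to intersect all the genericity conditions simultaneously: I must choose the open subset of $\imS_X$ so that the differential-equation uniqueness argument packaged in Lemma~\ref{self-equiv} applies to \emph{every} point of the fiber at once, and so that the stabilizer of the base point is trivial. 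This requires verifying that finitely many ``bad'' conditions (exceptional points, vanishing of $\frac{\partial S}{\partial\kappa_2}$, non-trivial stabilizers, indeterminacy of $\s_\Chi$) each cut out a proper closed subset, whose union can then be avoided on a dense open set; the key technical point is that the fiber-to-orbit correspondence is a genuine bijection rather than merely a surjection, which is exactly what Lemma~\ref{self-equiv} supplies in one direction and~\eqref{pr-g-comm} supplies in the other.
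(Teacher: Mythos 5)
Your proposal is correct and follows essentially the same route as the paper: both directions rest on Lemma~\ref{Lem:ZeroDimSig} to dispose of the infinite-symmetry case, Proposition~\ref{pr:FOrbit} to get generic orbits of size $n$, the fundamental prolongation property~\eqref{pr-g-comm} together with invariance of $K_1,K_2$ for the inclusion $Hp\subseteq\s_\Chi^{-1}(s)$, and Lemma~\ref{self-equiv} for the reverse inclusion, with the genericity conditions intersected exactly as in the paper's construction of $\imS_0=\imS_1\cap\imS_2$. Your explicit handling of the preimages-all-non-exceptional condition and of the infinite case is only a presentational refinement of the paper's argument.
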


\begin{proof} 
($\Rightarrow$)    We need to show that there exists a dense subset $\imS_0\subset \overline{{\imS}_{X}}$, such that $|\s_{\Chi}^{-1}(s)|=n$ for all $s\in \imS_0$.   Denote $H:=\Sym(\Chi,G)$. Since $H$ is finite,  from Lemma~\ref{Lem:ZeroDimSig}, it follows  that  $\overline{{\imS}_{X}}$ is an irreducible curve and its defining polynomial $S(\kappa_1,\kappa_2)$ depends non-trivially on $\kappa_2$.  Therefore the set $\imS_1=\left\{s\in \imS_X\,\Big |\, \left.\frac{\partial S}{\partial \kappa_2}\right |_s\neq 0\right\}$ is dense in $\overline{\imS_{\Chi}}$.
Due to Proposition~\ref{pr:FOrbit}  for all but maybe finitely many points $p\in \Chi$,  the orbit  $Hp$ consists of exactly $n$ distinct points. Moreover, since $X$ has only  finitely many exceptional points, the set of points $$X_0=\{p\in X\,|\,Hp \text{ consists of exactly $n$ non-exceptional points}\}$$ is dense in $\Chi$.  Then its image  $\imS_2=\s_{\Chi}(X_0)$ is dense in $\overline{\imS_{\Chi}}$.   It follows that the intersection $\imS_0:=\imS_1\cap\imS_2$ is dense in $\overline{\imS_{\Chi}}$. For any $s\in \imS_0$,  let $p\in \s_{\Chi}^{-1}(s)$. By Lemma~\ref{self-equiv},  $\s_{\Chi}^{-1}(s) =Hp$ and so $|\s_{\Chi}^{-1}(s)|=n$. 

$(\Leftarrow)$ Suppose that the map $\s_{\Chi}$ is generically $n:1$. 
Then, by Lemma~\ref{Lem:ZeroDimSig}, $\Sym(X,G)$ is finite. By the forward implication,  $n=\Sym(X,G)$.
\end{proof}

%%%

\begin{example}
Consider the special Euclidean group $\SE(2)$  of  complex translations and rotations of $\C^2$.
The set  $\ISE=\{K_1,K_2\}$, where $K_1=\kappa^2$, the square of Euclidean curvature, and $K_2=\kappa_s$ its derivative with respect to 
Euclidean arc-length, explicitly given in \eqref{eq-kappa} is classifying.  
Indeed, one can check directly that $\ISE$ separates orbits on the  $\SE$-invariant open subset 
$$
W_2=\left\{\left(x,y,y^{(1)},y^{(2)},y^{(3)}\right)\, |\, \left(y^{(1)}\right)^2+1\neq 0\right\}
$$
and $K_1$ separates orbits on an open set $W_1=\pi(W_2)\subset J^2$ under the standard projection $\pi\colon J^3\to  J^2$. Thus the conditions of Definition~\eqref{def-dsep} are satisfied. According to Theorem~\ref{th-class} we conclude that 
$$
\C(J^3)^{\SE(2)}=\C(K_1,K_2) \ \ \text{ and }  \ \  \C(J^2)^{\SE(2)}=\C(K_1).
$$

By Theorem~\ref{thm-non-excep-generic}, a generic curve of degree $\geq 2$ is non-exceptional with respect to $\ISE$. In fact, 
a careful consideration of the conditions in  Definition~\ref{Def:IReg} shows that there are \emph{no} irreducible curves of degree greater than one that are  $\ISE$-exceptional. 

We will now compute the signature polynomial for  the ellipse $X$ defined by the zero set of $$F(x,y)=x^2+y^2+xy-1.$$ 
The signature map
  $\s_{\Chi}=\left(K_1|_{\Chi},K_2|_{\Chi}\right)\colon X\to \C^2$ is explicitly defined by
\[
  K_1|_{\Chi}(x,y)=36\frac{(x^2+xy+y^2)^2}{(5x^2+8xy+5y^2)^3} \ \ \text{ and } \ \ 
  K_2|_{\Chi}(x,y)= 54\frac{(y^4-x^4+xy^3-x^3y)}{(5x^2+xy+y^2)^3}.
\]
  
Under  the $\SE(2)$-action the ellipse has  a  symmetry group of cardinality two generated by the $180^\circ$-degree rotation.
We observe that  in agreement with Theorem~\ref{Thm:NToOne}, $\s_{\Chi}$ is generically a $2\colon1$ map on $X$.
One can use a Gr\"obner basis elimination algorithm to compute a signature polynomial of $X$, that is  an irreducible polynomial  vanishing on the image of rational map $\s_{\Chi}$:
$$S_{\Chi}(\kappa_1,\kappa_2)=2916\kappa_1^6+972\kappa_1^4\kappa_2^2+108\kappa_1^2\kappa_2^4+4\kappa_2^6-13608\kappa_1^5+1944\kappa_1^3\kappa_2^2+2187\kappa_1^4.
$$
Any curve $\SE(2)$-equivalent to $\Chi$ will have the same signature polynomial.  For most degree three algebraic curves, it  takes much longer  to compute their signature polynomials under $\SE(2)$ actions, and for  higher degree curves it is rarely possible in practice. For this reason, it is of interest to determine properties, such as the degree, of signature polynomials for curves without their explicit computation.
\end{example}

\section{The degree of the signature of  algebraic curves}\label{Sec:Algebraic}

This section is devoted  to the degree formula for the signature polynomial.
In Section~\ref{Sec:AlgGeoBack} we give the necessary algebraic geometry background. 
In Section~\ref{Sec:SigDegForm} we give a formula for the degree of the signature polynomial for a non-exceptional curve with finite symmetry group. (Theorem~\ref{Thm:Main1B})   and some easily computable bounds for  this degree (Corollary~\ref{Cor:Bounds}).
%%%%%
\subsection{Multiplicity, plane curves, and rational maps}\label{Sec:AlgGeoBack}
Here we review and establish some fundamental properties of plane curves, their intersections, and their images under rational maps.  See, for example, \cite{Fulton} or \cite{Shaf1} for more background.

\begin{definition}\label{def-mult}
Given a point $p\in \C^2$, the \emph{local ring} of $\C^2$ at $p$, denoted $\mathcal{O}_p$, is the  
ring of rational functions in $\C(x,y)$ that are defined at $p$. 
A polynomial ideal $I\subset \C[x, y]$ defines an ideal $I\cdot\mathcal{O}_p$ of the local ring, 
and  the \emph{multiplicity of $I$ at $p$} is defined to be the dimension (as a $\C$-vector space) of the quotient:
$$
m_p(I) \ \ = \ \ \dim_\C \left(\mathcal{O}_p/I\cdot\mathcal{O}_p\right) 
$$
In particular, $m_p(I)$ is positive if and only if $p$ belongs to the variety $V(I)$. 
For a homogeneous ideal $J\subset \C[x_0,x_1,x_2]$ and a point $\pp =[p_0:p_1:p_2]\in\C\PP^2$ with $p_i\neq 0$, 
we define the multiplicity of $J$ at $\pp$, denoted $m_{\pp}(J)$, to be $m_p(I)$, where $p\in \C^2$ and $I$ are obtained from $\pp$ and $J$ by restricting 
$p_i$ and $x_i$ to equal $1$, respectively. On can check that this definition is independent of the choice of non-zero coordinate $p_i$. 
\end{definition}

 In an important special case  when the ideal $I$ is generated by two polynomials, $I = \langle F,G \rangle$, 
 we call $m_p(I) = m_p(F,G)$ the \emph{intersection multiplicity} of $F,G$ at $p$. 
In this case, $m_p(F, G) =1$ if and only if $p\in V(F, G)$ and $\nabla_p F$ and $\nabla_p G$ are linearly independent. 

An equivalent definition of multiplicity uses power series. 
After a change of coordinates, we can take $p=(0,0)$. Then $m_p(I)$ equals the dimension as a $\C$-vector space
of the quotient of the power series ring $\C[[x,y]]$ by the image of $I$ in this ring: 
\[
m_p(I)  \ \ = \ \
\dim_\C \
\left(\C[[x,y]] / I\cdot \C[[x,y]] \right).
\]

  To precisely compute intersection multiplicities at a non-singular point $p$,  one can parametrize a neighborhood of $p$ in $V(F)$ 
using \emph{Laurent series}, $\C((t))$. The ring of Laurent series consists of  formal sums
$\sum_{j=k}^{\infty} a_j t^j$  for some integer $k\in \Z$. 
The   series we consider will converge for $t\in \C^*$ of sufficiently small modulus. 
We define the \emph{valuation}, denoted $\val(\mathbf{a})$, of a Laurent series $\mathbf{a} = \sum_{j=k}^{\infty} a_j t^j$
to be the smallest power of $t$ with nonzero coefficient.  Since $p$ is non-singular,  $\pd{F}{x}(p)\neq 0$ or $\pd{F}{y}(p)\neq 0$. Assume $\pd{F}{y}(p)\neq 0$, then in a neighborhood of $p$, $V(F)$ can be  parametrized by $\alpha(t)=(p_1+t, a(t))$, otherwise  it can be parametrized by  $\alpha(t)=( a(t), p_2+t)$, where in both cases $a(t)$ is some Maclaurin series.  Then according to \cite[\S8.4]{Fischer}:
\beq\label{eq-val-mult}
m_p(F, G) =\val(G(\alpha)).
\eeq

We now establish some basic facts and notation about rational maps on $\C\PP^2$. 
A vector $\bphi=[\bphi_0,\bphi_1,\bphi_2]$, whose entries $\bphi_0,\bphi_1,\bphi_2\in \C[x_0, x_1, x_2]$ are homogeneous polynomials of the same degree $d$, 
defines a rational map  $\bphi:\C\PP^2\dashrightarrow\C\PP^2$ (denoted  by the same symbol).  For any non-zero homogeneous polynomial $h \in \C[x_0, x_1, x_2]$, a polynomial vector $h\bphi=[h\bphi_0,h\bphi_1, h\bphi_2]$ defines
an equivalent rational map,  i.e.~the values of the maps $\phi$ and $h\phi$ are equal, whenever both are defined.
In what follows we  \emph{do not} assume that $\gcd(\bphi_0,\bphi_1,\bphi_2)=1$ and the following definition clearly depends on the choice of a polynomial vector.
%%%
\begin{definition} A vector $\bphi=[\bphi_0,\bphi_1,\bphi_2]$ whose entries $\bphi_0,\bphi_1,\bphi_2\in \C[x_0, x_1, x_2]$ are homogeneous polynomials of the same degree $d$,  is called a \emph{homogeneous vector of degree $d$} and  the notation $\deg(\bphi)=d$ is used. 
The \emph{base locus} of $\bphi$  is the set of points at which all its components are zero 
$$
Bl(\bphi)=V(\bphi_0,\bphi_1,\bphi_2).
$$
We say that $\bphi$ is \emph{defined on an algebraic curve} $X$ if $X$ is not contained in $Bl(\bphi)$. 
We say that $\bphi$ is  \emph{non-constant on an algebraic curve} $X$, if the corresponding rational map  $\bphi:\C\PP^2\dashrightarrow\C\PP^2$ is non-constant when restricted to $X$.
\end{definition}
%%%%
%%%
\begin{proposition} \label{Prop:BertiniCor}
Let $\F\in \C[x_0,x_1,x_2]$ be irreducible and homogeneous, 
and let $\bphi=[\bphi_0,\bphi_1,\bphi_2]$ be a homogeneous vector that is both
defined and non-constant on $V(\F)$. 
For $\ba=[a_0,a_1,a_2]\in \C\PP^2$, consider an equivalence class (up to scaling by a constant) of a linear form $\La = a_0y_0 + a_1y_1 + a_2 y_2\in \C[y_0,y_1,y_2]$ and its pullback $\bphi^*\La=a_0\bphi_0 + a_1\bphi_1 + a_2\bphi_2 \in \C[x_0,x_1,x_2]$.
Then for all $\ba\in \C\PP^2$:
\begin{itemize}
\item[(a)] $V(\bphi^*\La) = \bphi^{-1}(V(\La)) \cup Bl(\bphi)$.
\end{itemize}
In addition, if $\ba\in \C\PP^2$ is generic:
\begin{itemize}
\item[(b)] $\F$ and $\bphi^*\La$ have no common factors,
\item[(c)] if $\pp \in V(\F)\cap V(\bphi^*\La)$ with $\pp\not\in Bl(\bphi)$, then $m_{\pp}(\F, \bphi^*\La)=1$. 
\end{itemize}

\end{proposition}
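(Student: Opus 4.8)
The plan is to prove the three parts separately: (a) is a direct evaluation, (b) is elementary linear algebra over the coordinate ring of $V(\F)$, and (c) rests on Bertini's theorem applied on the normalization of $V(\F)$. For part (a), I would simply unwind the definition: a point $\pp$ lies in $V(\bphi^*\La)$ if and only if $a_0\bphi_0(\pp)+a_1\bphi_1(\pp)+a_2\bphi_2(\pp)=0$. If $\pp\in Bl(\bphi)$, then all three $\bphi_i(\pp)$ vanish and this equation holds automatically, so $Bl(\bphi)\subseteq V(\bphi^*\La)$. If $\pp\notin Bl(\bphi)$, then $\bphi(\pp)$ is a well-defined point of $\CP$ and the equation reads exactly $\La(\bphi(\pp))=0$, i.e. $\pp\in\bphi^{-1}(V(\La))$. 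Taking the union yields (a), valid for every $\ba$.

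For part (b), since $\F$ is irreducible, $\F$ and $\bphi^*\La$ share a common factor exactly when $\F\mid\bphi^*\La$. Passing to the quotient ring $R=\C[x_0,x_1,x_2]/(\F)$ and writing $\overline{\bphi_i}$ for the image of $\bphi_i$, the divisibility $\F\mid\bphi^*\La$ is equivalent to the \emph{linear} condition $a_0\overline{\bphi_0}+a_1\overline{\bphi_1}+a_2\overline{\bphi_2}=0$ in $R$. Thus the set of bad $\ba$ is a linear subspace $L\subseteq\C^3$. This subspace is \emph{proper}: if $L=\C^3$, then $\overline{\bphi_0}=\overline{\bphi_1}=\overline{\bphi_2}=0$, i.e. $\F\mid\bphi_i$ for every $i$, forcing $V(\F)\subseteq Bl(\bphi)$ and contradicting the hypothesis that $\bphi$ is defined on $V(\F)$. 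Hence the projectivization of $L$ is a proper (closed) linear subset of $\CP$, so for generic $\ba$ we have $\F\nmid\bphi^*\La$, giving (b).

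For part (c), I would first take $\ba$ generic enough that (b) holds, so that $V(\F)\cap V(\bphi^*\La)$ is a finite set. Let $C=V(\F)$ and let $\nu:\tilde C\to C$ be its normalization, a smooth projective curve; since $\bphi$ is defined and non-constant on $C$, the rational map $\bphi|_C$ lifts to a non-constant morphism $\psi=\bphi\circ\nu:\tilde C\to\CP$ whose image is a curve. The central identity is that, for a smooth point $\pp$ of $C$ with $\pp\notin Bl(\bphi)$ and unique preimage $\tilde\pp$,
$$ m_\pp(\F,\bphi^*\La) \;=\; \val\bigl((\bphi^*\La)(\alpha(t))\bigr) \;=\; \ord_{\tilde\pp}(\psi^*\La), $$
where $\alpha(t)$ is a local parametrization of $C$ at $\pp$. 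The first equality is \eqref{eq-val-mult}, and the second holds because $\pp\notin Bl(\bphi)$ guarantees some $\bphi_i(\pp)\neq 0$, so $\bphi_i\circ\alpha$ serves as a local frame for $\psi^*\mathcal{O}(1)$ and no valuation offset appears (and $\nu$ is a local isomorphism at the smooth point $\pp$). Applying Bertini's theorem on generic smoothness to $\psi$, for generic $\ba$ the preimage $\psi^{-1}(V(\La))$ is smooth, i.e. a reduced zero-dimensional scheme, so $\ord_{\tilde\pp}(\psi^*\La)=1$ at each of its points. This settles every intersection point that is smooth on $C$. The only remaining intersection points are the finitely many singular points of $C$; for each such $\pp\notin Bl(\bphi)$ the image $\bphi(\pp)$ is a single well-defined point, and a generic line $V(\La)$ avoids it, so for generic $\ba$ no singular point of $C$ outside $Bl(\bphi)$ lies on $V(\bphi^*\La)$. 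Intersecting these finitely many open genericity conditions gives (c).

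I expect the main obstacle to lie entirely in part (c), specifically in the careful bookkeeping that identifies the intersection multiplicity $m_\pp(\F,\bphi^*\La)$ with the order of vanishing $\ord_{\tilde\pp}(\psi^*\La)$ on the normalization — checking that the hypothesis $\pp\notin Bl(\bphi)$ removes any valuation offset and that $\nu$ is a local isomorphism at smooth points — and in invoking Bertini in exactly the form that yields reducedness of the line-preimage under $\psi$, while separately dispatching the finitely many singular points of $C$. By contrast, (a) is purely formal and (b) reduces to observing that the divisibility condition is linear in $\ba$ and cutting out a proper subspace.
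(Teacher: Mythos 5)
Your proof is correct, and while parts (a) and (b) track the paper closely (for (b) the paper argues via a continuity/openness argument on the map $(\ba,\pp)\mapsto \bphi^*\La(\pp)$, whereas you observe that the bad set of $\ba$ is a proper linear subspace of $\C^3$ --- a minor but cleaner variant), your part (c) takes a genuinely different route. The paper forms the incidence variety $\mathcal Y=\{(\pp,\ba): \pp\in V(\F,\bphi^*\La),\ \pp\notin Bl(\bphi)\cup V(\F)_{sing}\}$ inside $\C^6$, applies Bertini's generic smoothness to the projection onto the $\ba$-coordinates, and shows by an explicit Jacobian computation that linear dependence of $\nabla\F|_{\pp}$ and $\nabla_x\bphi^*\La|_{(\pp,\ba)}$ would force the differential of the projection to be non-surjective; transversality, hence multiplicity one, follows for generic $\ba$. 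You instead pass to the normalization $\nu:\tilde C\to V(\F)$, identify $m_{\pp}(\F,\bphi^*\La)$ with the order of vanishing of the pulled-back linear form along the lifted morphism $\psi=\bphi\circ\nu$ (using \eqref{eq-val-mult} and the fact that some $\bphi_i$ is a unit near $\pp\notin Bl(\bphi)$, so no valuation offset occurs), and then invoke Bertini for the base-point-free linear system $\psi^*|\mathcal O(1)|$ on the smooth curve $\tilde C$ to get reducedness of the generic member. Both arguments dispose of the singular points of $V(\F)$ outside $Bl(\bphi)$ the same way, by noting a generic line misses their finitely many images. The paper's approach is self-contained and explicitly computational at the level of tangent spaces; yours buys conceptual economy by reusing the valuation formula the paper has already set up (and reuses again in Lemma~\ref{Lem:GenMult}) and by invoking Bertini in its most classical curve-theoretic form, at the modest cost of introducing the normalization and checking that it is a local isomorphism at smooth points.
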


\begin{proof}
%\begin{itemize}

%\item[(a)] 
(a) If $\pp \not\in Bl(\bphi)$, then $\bphi$ is defined at $\pp$. Then $\bphi(\pp)$ belongs to $V(\La)$ if and only if $\pp$ belongs to $V(\bphi^*\La)$. 
If $\pp$ belongs to $Bl(\bphi)$, then it clearly also belongs to $V(\bphi^*\La)$.

%\item[(b)]
(b) Since $\F$ is irreducible, $\bphi^*\La$ and $\F$ have a common factor if and only if $\F$ divides $\bphi^*\La$, if and only if  $\bphi^*{\bf L_\ba}$ is identically zero on $V(\F)$.  Consider a map $\Psi\colon\C\PP^2\times V(\F)\to\C$, defined by $\Psi(\ba,\pp)=a_0\bphi_0 (\pp)+ a_1\bphi_1(\pp)+ a_2\bphi_2(\pp)$.  Since $\bphi$ is defined on $V(\F)$, there exists $\hat\ba\in \C\PP^2$ such that  $\hat a_0\bphi_0 + \hat a_1\bphi_1 + \hat a_2\bphi_2$ is not identically zero on $V(\F)$ (otherwise with  an appropriate choice of $\ba$'s we can show that  $\phi_j\equiv0$ on $V(\F)$ for  $j=0,1,2$).   Then $\Psi(\hat\ba,\hat\pp)\neq 0$ for some $\hat\pp\in V(\F)$.
 By continuity,  $\Psi(\ba,\pp)\neq 0$ in  some open neighborhood  of $(\hat\ba,\hat \pp)$ in $\Psi\colon\C\PP^2$. Thus $\bphi^*\La$ is  non-zero l on $V(\F)$ for all $\ba$ in some open subset of $\C\PP^2$.

%%%%%
%\item[(c)]
(c) It is sufficient to show that $ \nabla\F|_{\pp}$ and $ \nabla_x \bphi^*\La|_{(\pp,\ba)}$ are linearly dependent,
 for a generic $\ba\in \C^3$ and $\pp\in V(\F)$ such that  $\pp\not\in Bl(\bphi)$, where,  for a moment, we  consider $V(\F)$,  $Bl(\bphi)$  to be varieties in $\C^3$.  
Consider
\[\mathcal{Y}   \ = \  
\left\{ (\pp,\ba)\in \C^6\,|\, \p\in V(\F,\bphi^*\La),\quad \p\not\in Bl(\bphi) \cup V({\bf F})_{sing}\right\} \]
where $ V({\bf F})_{sing}$ denotes the variety $V(\frac{\partial \F}{\partial x_0},\frac{\partial \F}{\partial x_1},\frac{\partial \F}{\partial x_2})$.  
Let $\pi:\mathcal{Y} \rightarrow \C^3$ denote the regular map defined by projection $\pi(\p,\ba) = \ba$. 
Note that restricting to $\p\not\in Bl(\bphi) \cup V({\bf F})_{sing}$ makes $\mathcal{Y}$ nonsingular. 

Bertini's generic smoothness theorem  \cite[Ch. 2, Sec. 6, Thm 2.27]{Shaf1}\footnote{ Bertini's generic smoothness theorem is an algebraic analogue of  Sard's  theorem in differential geometry.} then guarantees 
the existences of a nonempty Zariski-open set $U\subset\C^3$ 
so that for all $\ba\in U$ and all preimages $(\p,\ba)\in \pi^{-1}(\ba)$, 
the induced map on tangent spaces $d_{(\pp,\ba)}\pi:T_{\mathcal{Y},(\pp,\ba)}\rightarrow T_{\C^3,\ba}$ is surjective. 

For $(\pp,\ba)\in \mathcal Y$,
 \[T_{\mathcal{Y},(\pp,\ba)} =\ker
\left[
\begin{array}{cccc}
  \nabla\F|_{\pp}& 0& 0& 0     \\
  \nabla_x \bphi^*\La|_{(\pp,\ba)}&  \phi_0(\pp) &\phi_1(\pp)&\phi_2(\pp) 
\end{array}
\right].
\]

 The map  $d_{(\pp,\ba)}$ maps  $(u_0,u_1,u_2,w_0,w_1,w_2)^T\in T_{\mathcal{Y},(\pp,\ba)} $ to $(w_0,w_1,w_2)^T\in T_{\C^3,\ba}=\C^3$. If $ \nabla\F|_{\pp}$ and $ \nabla_x \bphi^*\La|_{(\pp,\ba)}$ are linearly dependent as vectors in $\C^3$, then $(u_0,u_1,u_2,w_0,w_1,w_2)^T$ belongs to $T_{\mathcal{Y},(\pp,\ba)} $ if and only if $(u_0,u_1,u_3)^T\in \ker   \nabla\F|_{\pp}$ and $a_0w_0+a_1w_1+a_2w_2=0$. The latter gives a  non-trivial linear condition on the vectors in the image of $d_{(\pp,\ba)}\pi$ and, therefore,
 if $ \nabla\F|_{\pp}$ and $ \nabla_x \bphi^*\La|_{(\pp,\ba)}$ are linearly dependent,    $d_{(\pp,\ba)}\pi$ is not surjective.  

Combining  the results of the previous two  paragraphs, we conclude that for $\ba\in U$ and $\p\in V(\F)\cap V(\bphi^*\La) $, such that $ \p\not\in Bl(\bphi) \cup V({\bf F})_{sing}$,
 $ \nabla\F|_{\pp}$ and $ \nabla_x \bphi^*\La|_{(\pp,\ba)}$ are linearly independent. 
  Observing  that  for a generic $\ba$, $V(\bphi^*\La)\cap V({\bf F})_{sing}=\emptyset$, we finish the proof.
%\end{itemize}
\end{proof}
%%%%

%%%%%
\begin{lemma} \label{Lem:GenMult}
Let ${\bf F} \in \C[x_0,x_1,x_2]$ be homogeneous and irreducible, and let 
$\bphi$ be a homogeneous vector. For $\pp\in V(\F)\cap Bl(\bphi)$, 
the minimum of $m_{\pp}(\F,a_0\bphi_0 + a_1 \bphi_1 + a_2\bphi_2)$ over all $\ba\in \C\PP^2$ 
is achieved generically. 
\end{lemma}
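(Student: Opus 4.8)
The plan is to reduce the statement to a purely local computation at $\pp$ and to exploit the fact, recorded in \eqref{eq-val-mult}, that an intersection multiplicity at a point is an order of vanishing along $V(\F)$. The key idea is that $m_{\pp}(\F,\bphi^*\La)=\val\bigl((\bphi^*\La)(\alpha)\bigr)$ depends on $\ba$ only through the leading term of a power series, and the leading term can drop only under a single linear condition on $\ba$.

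First I would treat the case where $\pp$ is a non-singular point of $V(\F)$. Choose a local analytic parametrization $\alpha(t)$ of $V(\F)$ with $\alpha(0)=\pp$, as in the discussion preceding \eqref{eq-val-mult}. Then
\[
m_{\pp}(\F,\bphi^*\La)=\val\bigl((\bphi^*\La)(\alpha(t))\bigr)=\val\Bigl(\textstyle\sum_{i} a_i\,\bphi_i(\alpha(t))\Bigr).
\]
Each $\bphi_i(\alpha(t))$ is a power series in $t$, and since $\pp\in Bl(\bphi)$ we have $\bphi_i(\pp)=0$, so $v_i:=\val(\bphi_i(\alpha(t)))\geq 1$ whenever $\bphi_i(\alpha)\not\equiv 0$. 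Put $m:=\min_i v_i$ and let $c_i\neq 0$ be the coefficient of $t^{v_i}$ in $\bphi_i(\alpha(t))$. The coefficient of $t^{m}$ in $\sum_i a_i\bphi_i(\alpha(t))$ is then the linear form $\ell(\ba)=\sum_{i:\,v_i=m}c_i a_i$, which is not identically zero. Consequently the valuation is $\geq m$ for every $\ba$ and equals $m$ exactly when $\ell(\ba)\neq 0$, i.e. on the nonempty Zariski-open set $\{\ell\neq 0\}\subset\C\PP^2$; this attains the minimum $m$ generically.

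For a general, possibly singular, $\pp$ I would replace the single parametrization by the finitely many analytic branches $\beta$ of $V(\F)$ at $\pp$ and use the branch decomposition
\[
m_{\pp}(\F,\bphi^*\La)=\sum_{\beta}\val\bigl((\bphi^*\La)(\beta(t))\bigr),
\]
which is valid because $\F$ is square-free (being irreducible) and, by Proposition~\ref{Prop:BertiniCor}(b), $\bphi^*\La$ vanishes on no branch for generic $\ba$. Applying the previous paragraph to each branch yields a minimal value $m_\beta$ attained off a hyperplane $\{\ell_\beta=0\}$. Since each summand is $\geq m_\beta$ for \emph{all} $\ba$ (and the multiplicity is $+\infty\geq\sum_\beta m_\beta$ for the special $\ba$ where a branch is swallowed), the total is bounded below by $\sum_\beta m_\beta$, and it attains this value on the nonempty Zariski-open set $\C\PP^2\setminus\bigcup_\beta\{\ell_\beta=0\}$.

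The main obstacle is the singular case: one must justify the branch-decomposition formula and keep the valuation-of-a-sum step airtight. The crucial point is that for each branch the leading coefficients $c_{i,\beta}$ of the lowest-valuation components cannot all cancel — by the very definition of $m_\beta$ as the minimum valuation, at least one such coefficient is nonzero — so dropping below $m_\beta$ is a single genuine linear condition on $\ba$. Equivalently, one could package the argument as upper semicontinuity of the fiberwise length $\ba\mapsto\dim_\C\bigl(\mathcal{O}_\pp/\langle\F,\bphi^*\La\rangle\mathcal{O}_\pp\bigr)$ in the family parametrized by $\ba$, from which the minimum locus is immediately Zariski-open and nonempty; the valuation computation above is what makes this semicontinuity explicit.
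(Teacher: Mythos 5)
Your argument is correct and follows essentially the same route as the paper's: the non-singular case is handled by observing that exceeding the minimal order of vanishing along a local parametrization of $V(\F)$ is a single nontrivial linear condition on $\ba$, and the singular case is reduced to it by decomposing $m_{\pp}$ over the branches of $V(\F)$ at $\pp$, which is the analytic counterpart of the paper's passage to a non-singular model and the formula $m_{\pp}(\F,\G)=\sum_{\mathbf{q}\in f^{-1}(\pp)}m_{\mathbf{q}}(\mathbf{Y},\G')$.
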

%%%%%
\begin{proof}
 If $\pp$ is a non-singular point of $V(\F)$, 
then for any $j\in \Z_{\geq 0}$ the collection of $\G$ for which $m_{\pp}(\F, \G)\geq j$ 
is linear subspace of $\C[x_0,x_1,x_2]_D$, where $D=\deg(\G)$ (This claim  easily follows from \eqref{eq-val-mult}. See also \cite[Prob. 3.20]{Fulton}). It follows that 
$m_{\pp}(\F, a_0\bphi_0 + a_1 \bphi_1 + a_2\bphi_2)\geq j$ is a linear condition on $\ba\in \C\PP^2$. 

 Now suppose $\pp$ is a singular point of ${\bf X} = V(\F)$ and consider a non-singular model ${\bf Y}$ of this curve 
with birational morphism $f: {\bf Y}\rightarrow {\bf X}$ (see \cite[Ch. 7]{Fulton}).  
This induces an embedding of the  fields of rational functions
$f^*\colon \C({\bf X}) \hookrightarrow \C({\bf Y})$. 
Choose some linear form $\ell\in \C[x_0,x_1,x_2]_1$ 
with $\ell(\pp)\neq 0$.  
 Let $ \textbf{G}' = \textbf{G}/\ell^{\deg(\textbf{G})}$ in $\C({\bf X})$. 
Using \cite[Ch. 7, Prop. 2]{Fulton}:
 $$m_{\pp}(\F, \textbf{G})=\sum_{{\bf q}\in f^{-1}(\pp)} m_{\bf q}({\bf Y}, \textbf{G}'),$$ where $m_{\bf q}({\bf Y},  \textbf{G}')$ is the order of vanishing of $\textbf{G}'$ at the smooth point ${\bf q}\in {\bf Y}$.

This reduces to the non-singular case.  
\end{proof}

The minimum multiplicity in Lemma~\ref{Lem:GenMult} will reappear frequently and we denote it by 
\begin{equation}\label{eq:mapMult}
\mult_{\pp}(\F,\bphi) \ \ = \ \ \min_{\ba\in \C\PP^2}m_{\pp}(\F, a_0\bphi_0 + a_1 \bphi_1 + a_2\bphi_2).
\end{equation}
The following bounds can be useful for computing this multiplicity:
\begin{proposition}\label{prop-bounds}
Let ${\bf F} \in \C[x_0,x_1,x_2]$ be an irreducible homogeneous polynomial and $\bphi$ be a homogeneous vector defined on $V(\F)$. 
For $\pp\in Bl(\bphi)$ and for any $\ba = [a_0:a_1:a_2]\in \C\PP^2$,
\[ m_{\pp}(\langle \F, \bphi_0, \bphi_1, \bphi_2 \rangle) \ \leq  \  \mult_{\pp}(\F,\bphi) \ \leq \ m_{\pp}(\F, a_0\bphi_0 + a_1\bphi_1 + a_2\bphi_2), \]
where the right inequality is tight for generic $\ba\in \C\PP^2$. 
\end{proposition}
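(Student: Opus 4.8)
The plan is to treat the two inequalities separately; both are elementary consequences of the definition of multiplicity as a vector-space dimension, together with the already-established Lemma~\ref{Lem:GenMult}.

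For the right-hand inequality I would simply invoke the definition \eqref{eq:mapMult} of $\mult_{\pp}(\F,\bphi)$ as the minimum of $m_{\pp}(\F,\, a_0\bphi_0+a_1\bphi_1+a_2\bphi_2)$ over all $\ba\in\C\PP^2$. Being a minimum, it is at most the value attained at any particular $\ba$, which gives $\mult_{\pp}(\F,\bphi)\leq m_{\pp}(\F,\, a_0\bphi_0+a_1\bphi_1+a_2\bphi_2)$ for every $\ba$. The assertion that this is tight for generic $\ba$ is exactly the content of Lemma~\ref{Lem:GenMult}, which states that the minimum defining $\mult_{\pp}$ is achieved generically, so no further work is needed here.

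For the left-hand inequality, the key observation is monotonicity of multiplicity under ideal containment. For any $\ba$, the linear combination $a_0\bphi_0+a_1\bphi_1+a_2\bphi_2$ lies in the ideal $\langle\bphi_0,\bphi_1,\bphi_2\rangle$, so
\[
\langle \F,\ a_0\bphi_0+a_1\bphi_1+a_2\bphi_2\rangle\ \subseteq\ \langle\F,\bphi_0,\bphi_1,\bphi_2\rangle .
\]
After dehomogenizing at a nonzero coordinate of $\pp$ (as in Definition~\ref{def-mult}), this containment passes to the corresponding ideals in the local ring $\mathcal{O}_{\pp}$. Now if $J\subseteq I$ are ideals of $\mathcal{O}_{\pp}$, then the natural projection exhibits $\mathcal{O}_{\pp}/I\cdot\mathcal{O}_{\pp}$ as a quotient of $\mathcal{O}_{\pp}/J\cdot\mathcal{O}_{\pp}$, so its dimension over $\C$ is no larger; that is, $m_{\pp}(I)\leq m_{\pp}(J)$. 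Applying this with $I=\langle\F,\bphi_0,\bphi_1,\bphi_2\rangle$ and $J=\langle\F,\, a_0\bphi_0+a_1\bphi_1+a_2\bphi_2\rangle$ yields $m_{\pp}(\langle\F,\bphi_0,\bphi_1,\bphi_2\rangle)\leq m_{\pp}(\F,\, a_0\bphi_0+a_1\bphi_1+a_2\bphi_2)$ for every $\ba$.

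To finish I would take the minimum of the right-hand side over $\ba\in\C\PP^2$: since the bound just obtained holds for all $\ba$, it holds for the minimizing $\ba$ as well, giving $m_{\pp}(\langle\F,\bphi_0,\bphi_1,\bphi_2\rangle)\leq\mult_{\pp}(\F,\bphi)$, which is the left inequality, and together with the right inequality the full chain follows. I do not anticipate any genuine obstacle here: the whole argument rests on the order-reversing relationship between ideals and the dimensions of their local quotients, and the only point deserving a word of care is the dehomogenization, which is precisely the passage already used to define $m_{\pp}$ at projective points in Definition~\ref{def-mult}. (Note that the statement needs no hypothesis $\pp\in V(\F)$: if $\pp\notin V(\F)$ then $\F$ is a unit in $\mathcal{O}_{\pp}$ and every term in the chain vanishes, so the inequalities hold trivially.)
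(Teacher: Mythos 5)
Your proposal is correct and follows essentially the same route as the paper's proof: the right inequality is immediate from the definition of $\mult_{\pp}(\F,\bphi)$ as a minimum with tightness given by Lemma~\ref{Lem:GenMult}, and the left inequality comes from the containment $a_0\bphi_0+a_1\bphi_1+a_2\bphi_2\in\langle\bphi_0,\bphi_1,\bphi_2\rangle$ together with the order-reversing behavior of $m_{\pp}$ under ideal inclusion. Your final step (passing to the minimum over $\ba$ rather than to a generic $\ba$) is a harmless, if anything slightly cleaner, variant of the paper's conclusion.
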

\begin{proof}
For the first inequality, note that for any $\ba\in \C\PP^2$, $\bphi^*\La=a_0\bphi_0 +a_1\bphi_1 + a_2\bphi_2$ 
 belongs to the ideal $\langle \bphi_0, \bphi_1, \bphi_2\rangle$. 
 It follows immediately from Definition~\ref{def-mult}  that  for 
any pair of nested homogeneous ideals $I \subset J \subset \C[x_0,x_1, x_2]$ and any point $\pp\in \C\PP^2$, we have that $m_{\pp}(I) \geq m_{\pp}(J)$.
Therefore, for every point $\pp\in \C\PP^2$,  $m_{\pp}(\F, \bphi^*\L) \geq m_{\pp}(\langle \F, \bphi_0, \bphi_1, \bphi_2\rangle)$. 
The inequality then follows from a generic choice of  $\ba\in\C\PP^2$ and equation \eqref{Eq:AGdegrees}.

The second inequality follows directly from the definition of $\mult_{\pp}(\F,\bphi)$, and tightness follows from Lemma~\ref{Lem:GenMult}.
\end{proof}

\begin{theorem}\label{Thm:MainLem}
Let $\F\in\C[x_0, x_1, x_2]$ be irreducible and homogeneous and $\bphi$ be a homogeneous vector, such that
 the rational map  $\bphi:\C\PP^2 \dashrightarrow\C\PP^2$ is defined and generically $n:1$ on $V(\F)$.
Let $\textbf{P}\in\C[y_0,y_1,y_2]$ denote the minimal polynomial vanishing on the image  $\bphi(V(\F))$.
Then 
\begin{equation}\label{Eq:AGdegrees}
n\cdot \deg(\textbf{P}) 
 \ \ =\ \ 
  \deg(\F)\cdot \deg(\bphi)-\sum\limits_{\pp\in Bl(\bphi)} \mult_{\pp}(\F,\bphi) 
\end{equation}
\end{theorem}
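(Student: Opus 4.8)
The plan is to compare two ways of counting the intersection of $V(\F)$ with the pullback of a generic line in the target. Fix a generic $\ba=[a_0:a_1:a_2]\in\C\PP^2$, let $\La=a_0y_0+a_1y_1+a_2y_2$ be the corresponding linear form in the target $\C\PP^2$, and let $\bphi^*\La=a_0\bphi_0+a_1\bphi_1+a_2\bphi_2$ be its pullback, which is homogeneous of degree $\deg(\bphi)$. Since $\bphi$ is non-constant on $V(\F)$, its image is a curve and $\textbf{P}$ is the minimal polynomial of $\overline{\bphi(V(\F))}$; because the target line $V(\La)$ is generic, it meets $V(\textbf{P})$ transversally in exactly $\deg(\textbf{P})$ distinct points, each of which I may assume lies in the dense open locus of $V(\textbf{P})$ over which $\bphi|_{V(\F)}$ is $n{:}1$. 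The backbone of the argument is B\'ezout's theorem applied in the source: $\F$ is irreducible and, by Proposition~\ref{Prop:BertiniCor}(b), shares no factor with $\bphi^*\La$ for generic $\ba$, so
\beq
\sum_{\pp\in V(\F)\cap V(\bphi^*\La)} m_{\pp}(\F,\bphi^*\La)=\deg(\F)\cdot\deg(\bphi).
\eeq

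Next I would split this intersection into the part off the base locus and the part in $Bl(\bphi)$. By Proposition~\ref{Prop:BertiniCor}(a), $V(\bphi^*\La)=\bphi^{-1}(V(\La))\cup Bl(\bphi)$, so the points of $V(\F)\cap V(\bphi^*\La)$ lying outside $Bl(\bphi)$ are precisely the genuine preimages under $\bphi$ of the $\deg(\textbf{P})$ points of $V(\La)\cap V(\textbf{P})$. Using the $n{:}1$ property, and choosing the image points to avoid the proper closed loci where the fiber is larger or where $\bphi$ is undefined, each of these $\deg(\textbf{P})$ points has exactly $n$ preimages on $V(\F)$, for a total of $n\cdot\deg(\textbf{P})$ points off the base locus. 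By Proposition~\ref{Prop:BertiniCor}(c), for generic $\ba$ each such preimage $\pp\notin Bl(\bphi)$ is a transverse intersection, $m_{\pp}(\F,\bphi^*\La)=1$, so the off-base contribution to the B\'ezout count is exactly $n\cdot\deg(\textbf{P})$.

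It then remains to identify the contribution of the base locus. For each $\pp\in V(\F)\cap Bl(\bphi)$, Lemma~\ref{Lem:GenMult} guarantees that for generic $\ba$ the multiplicity $m_{\pp}(\F,\bphi^*\La)=m_{\pp}(\F,a_0\bphi_0+a_1\bphi_1+a_2\bphi_2)$ attains its minimal value, which is exactly $\mult_{\pp}(\F,\bphi)$ as defined in \eqref{eq:mapMult}; for $\pp\notin V(\F)$ one has $\mult_{\pp}(\F,\bphi)=0$, so the sum may harmlessly be taken over all of $Bl(\bphi)$. Substituting the two contributions into the B\'ezout count gives
\beq
\deg(\F)\cdot\deg(\bphi)=n\cdot\deg(\textbf{P})+\sum_{\pp\in Bl(\bphi)}\mult_{\pp}(\F,\bphi),
\eeq
and rearranging yields \eqref{Eq:AGdegrees}.

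The main obstacle is the genericity bookkeeping: I must produce a single $\ba$ that simultaneously (i) makes $V(\La)$ meet $V(\textbf{P})$ in $\deg(\textbf{P})$ distinct points inside the $n{:}1$ locus, (ii) forces every off-base preimage to be a simple intersection via Proposition~\ref{Prop:BertiniCor}(c), and (iii) realizes the minimal base-locus multiplicities of Lemma~\ref{Lem:GenMult}. Each of these holds on a dense Zariski-open subset of $\C\PP^2$, and since a finite intersection of dense open sets is again dense and open, a generic $\ba$ satisfies all three at once. A secondary point to verify is that the preimages of the chosen image points can be taken disjoint from both $Bl(\bphi)$ and the singular locus of $V(\F)$, so that the fiber count $n$ and the transversality in Proposition~\ref{Prop:BertiniCor}(c) genuinely apply; this follows because these exceptional loci are proper closed subsets of $V(\F)$, hence finite, and are therefore avoided by the preimages of a generic line.
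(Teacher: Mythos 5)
Your proof is correct and follows essentially the same route as the paper's: B\'ezout applied to $\F$ and $\bphi^*\La$ for a generic line $\La$, the decomposition of the intersection via Proposition~\ref{Prop:BertiniCor}(a) into the base locus and the genuine preimages, transversality off the base locus from part (c), the generic realization of $\mult_{\pp}(\F,\bphi)$ from Lemma~\ref{Lem:GenMult}, and the $n{:}1$ fiber count over the $\deg(\textbf{P})$ points of $V(\La)\cap V(\textbf{P})$. The paper phrases the last step through Chevalley's theorem (the image is all but finitely many points of $V(\textbf{P})$), which is exactly the justification your ``avoid the proper closed loci'' remark implicitly relies on.
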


\begin{proof}
For  a linear form  ${\La} = a_0y_0 + a_1y_1 + a_2 y_2\in \C[y_0,y_1,y_2]$, by
Bezout's Theorem (\cite[\S 5.3]{Fulton}) and Proposition~\ref{Prop:BertiniCor}(a) give that 
$$
\deg(\F)\cdot \deg(\bphi^*\La)
 =  \sum_{\pp}m_{\pp}(\F,\bphi^*\La)
 =  \sum_{\pp\in \bphi^{-1}(V(\La))}m_{\pp}(\F,\bphi^*\La)+\sum_{\pp\in Bl(\bphi)}m_{\pp}(\F,\bphi^*\La).
$$
By  Lemma~\ref{Lem:GenMult}, $m_{\pp}(\F,\bphi^*\La)=\mult_{\pp}(\F,\bphi)$  for a generic $\ba\in \C\PP^2$ and  every $\pp$. 
Also, for a generic $\ba$, $\bphi^*\La$ is nonzero and its degree equals $\deg(\bphi)$. 
By Proposition~\ref{Prop:BertiniCor}(c), for each point $\pp\in V(\F)\cap \bphi^{-1}(V(\La))$, the intersection multiplicity $m_{\pp}(\F,\bphi^*\La)$ equals one. 
Since $\bphi$ is generically $n:1$, there are at most finitely many points $\pp\in V(\F)$ for which $|\bphi^{-1}(\bphi(\pp)) \cap V(\F)| \neq n$, implying that  foe a generic $\ba$, the  line $V(\La)$ 
does not contain the image $\bphi(\pp)$ of any of these points. 
Therefore, for every point $\pp\in \bphi^{-1}(\La)\cap V(\F)$,  
there are exactly $n$ points of $V(\F)$ in the set $\bphi^{-1}(\bphi(\pp))$. 
Putting this all together gives that 
$$
\sum_{\pp\in  \bphi^{-1}(V(\La))}m_{\pp}(\F,\bphi^*\La)  
\ \ = \ \  |V(\F)\cap \bphi^{-1}(V(\La))|
\ \ = \ \ n\cdot |\bphi(V(\F))\cap V(\La)|.
$$
By Chevalley's Theorem (see e.g. \cite[Thm. 3.16]{Harris}), the image $\bphi(V(\F))$ is all but finitely many points of its Zariski closure $V(\textbf{P})$. 
For a generic  $\ba$,  every point in $V(\La)\cap V(\textbf{P})$ belongs to $V(\La)\cap \bphi(V(\F))$
and that the number of these points equals to $\deg(\textbf{P})$. This proves equality in \eqref{Eq:AGdegrees}. 
\end{proof}

\subsection{The degree of the signature polynomial}\label{Sec:SigDegForm}

\begin{definition}
Let $X\subset \C^2$ be an algebraic plane curve and let $\psi: X\dashrightarrow \C^2$ be a rational map. 
We say that a rational map $\bphi:\C\PP^2\dashrightarrow \C\PP^2$ is a \emph{projective extension} of $\psi$ 
if 
\[\psi(p) \ \  = \ \ \left(\frac{\bphi_1(1,p)}{\bphi_0(1,p)}, \frac{\bphi_2(1,p)}{\bphi_0(1,p)}\right)\]
for a Zariski-dense set of points $p\in X$ at which $\psi$ is defined and $\bphi_0(1,p)\neq 0$. 
\end{definition}

Recall from Section~\ref{Sec:Sig}, that a classifying set of rational differential invariants of the action of a group $G$ on $\C^2$ 
define a \emph{signature map} $\sigma_X$ on a non-exceptional, irreducible curve $X\subset \C^2$.  As in Definition~\ref{def:sig}, 
we fix a classifying set of rational differential invariants $\I$ with respect to the action $G$ and suppose that the 
signature map $\s_{\Chi}:\Chi\dashrightarrow \C^2$ is non-constant on $X$. 
We will consider a projective extension $\bs:\C\PP^2 \dashrightarrow \C\PP^2$. Note that while we will drop $X$
from the notation, the map $\bs$ still heavily depends on the original curve $X$.

\newpage
\begin{theorem}\label{Thm:Main1B}
Let $X\subset \C^2$ be a non-exceptional algebraic curve defined by an irreducible polynomial $F$, and let  
$n=|\Sym(X,G)|$.  Then for any homogeneous vector $\bs$, defining a projective extension $\bs:\C\PP^2 \dashrightarrow \C\PP^2$ of the signature map $\sigma_X$, 
 the degree of the signature polynomial $S_{\Chi}$ satisfies 
\begin{equation}\label{Eq:MainThmEq}
n\cdot \deg(S_{\Chi}) \ \ = \ \ \deg(\F)\cdot \deg(\bs)-\sum_{{\pp}\in Bl(\bs)} \mult_{\pp}(\F, \bs).
\end{equation}
Here $\F\in \C[x_0,x_1, x_2]$ denotes the homogenization of $F$. 
\end{theorem}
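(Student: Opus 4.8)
The plan is to deduce this formula directly from Theorem~\ref{Thm:MainLem}, applied to the homogenization $\F$ of $F$ and to the homogeneous vector $\bphi = \bs$ defining the projective extension of $\sigma_X$. Once the hypotheses of that theorem are verified and the relevant objects are matched up, the identity \eqref{Eq:MainThmEq} is literally the content of \eqref{Eq:AGdegrees}: with $\bphi=\bs$ the right-hand side reads $\deg(\F)\cdot\deg(\bs)-\sum_{\pp\in Bl(\bs)}\mult_{\pp}(\F,\bs)$, and the left-hand side reads $n\cdot\deg(\textbf{P})$, so it remains only to check (i) the standing hypotheses, (ii) that $\bs$ is generically $n\!:\!1$ on $V(\F)$, and (iii) that $\deg(\textbf{P})=\deg(S_{\Chi})$.

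First I would verify the hypotheses of Theorem~\ref{Thm:MainLem}. Since $F$ is irreducible and $X$ has degree greater than one, $x_0\nmid\F$, and any nontrivial homogeneous factorization of $\F$ would dehomogenize (without dropping degree, as $x_0$ divides neither factor) to a nontrivial factorization of $F$; hence $\F$ is irreducible and homogeneous. Because $\bs$ is a \emph{projective extension} of $\sigma_X$, the defining identity holds on a Zariski-dense subset of $X$, so $X$, and therefore $V(\F)$, is not contained in $Bl(\bs)$; thus $\bs$ is defined on $V(\F)$. It is non-constant on $V(\F)$ precisely because we have assumed $\sigma_X$ is non-constant on $X$.

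The key step is to transfer the fiber count and to identify $\textbf{P}$. By Theorem~\ref{Thm:NToOne}, since $X$ is non-exceptional and $|\Sym(X,G)|=n$, the affine signature map $\sigma_X$ is generically $n\!:\!1$ on $X$. On the affine chart $\{y_0\neq 0\}$ of the target $\C\PP^2$, with coordinates $(\kappa_1,\kappa_2)=(y_1/y_0,\,y_2/y_0)$, the restriction of $\bs$ to the dense affine part $X\subset V(\F)$ agrees with $\sigma_X$. Since $V(\F)\setminus X$ and the relevant exceptional and base-locus sets are finite, a generic point of the image $\bs(V(\F))$ lies in this chart and has all $n$ of its preimages in $X$; hence $\bs$ is generically $n\!:\!1$ on $V(\F)$. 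Moreover the Zariski closure of $\bs(V(\F))$ restricts in the same chart to $\overline{\imS_{\Chi}}=V(S_{\Chi})$, so the irreducible minimal polynomial $\textbf{P}$ of $\bs(V(\F))$ is not divisible by $y_0$ and dehomogenizes to $S_{\Chi}$; consequently $\textbf{P}$ is the homogenization of $S_{\Chi}$ and $\deg(\textbf{P})=\deg(S_{\Chi})$. Feeding these identifications into Theorem~\ref{Thm:MainLem} yields \eqref{Eq:MainThmEq}.

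I expect the main obstacle to be the careful bookkeeping of the affine-to-projective passage: the genuinely affine statements — the $n\!:\!1$ property of $\sigma_X$ (Theorem~\ref{Thm:NToOne}) and the identification of $\overline{\imS_{\Chi}}$ with $V(S_{\Chi})$ (Definition~\ref{def:sig} and the surrounding discussion of the signature polynomial) — must be shown to survive when $\sigma_X$ is replaced by its projective extension $\bs$ on the projective curve $V(\F)$. The crux is that the finitely many points added at infinity on $V(\F)$, together with $Bl(\bs)$, cannot alter either the generic fiber cardinality or the degree of the minimal polynomial; here one leans on the density built into the definition of a projective extension and on Chevalley's theorem (already invoked in the proof of Theorem~\ref{Thm:MainLem}) to keep control of the images.
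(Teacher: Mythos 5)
Your proposal is correct and follows essentially the same route as the paper: invoke Theorem~\ref{Thm:NToOne} to get that $\sigma_X$ is generically $n\!:\!1$, transfer this to $\bs$ on $V(\F)$, identify the minimal polynomial $\mathbf{P}$ of $\bs(V(\F))$ with the homogenization of $S_{\Chi}$, and conclude via Theorem~\ref{Thm:MainLem}. Your write-up merely spells out the hypothesis checks and the affine-to-projective bookkeeping that the paper leaves implicit.
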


\begin{proof}
From Theorem~\ref{Thm:NToOne} we know that $\s_{\Chi}:\Chi\dashrightarrow \C^2$ is generically $n:1$ map. 
Then $\bs$ is defined and generically $n:1$ on $V(\F)$, which is the Zariski-closure of $X$ in $\C\PP^2$.  
Since $F$, and thus $\F$, are irreducible, the minimal polynomial ${\bf P}$ 
vanishing on the image $\bs(V(\F))$ is also irreducible. Its dehomogenezation is exactly the signature polynomial $S_X$. 
The result then follows from Theorem~\ref{Thm:MainLem}.
\end{proof}

At first glance the last term in the degree formula \eqref{Eq:MainThmEq}  appears to be difficult  to obtain as we recall from \eqref{eq:mapMult},
$\mult_{\pp}(\F,\bphi)$ is defined as the minimal multiplicity over  $\ba\in \C\PP^2$.  The following corollary shows that a generic choice of $\ba\in \C\PP^2$ gives the desired minimal multiplicity, and thus the degree of the signature can be computed by randomized  algorithms.  It also  establishes the degree bounds, that can also help in determining the degree of a signature curve.

\begin{corol}\label{Cor:Bounds}
Under the hypotheses of Theorem~\ref{Thm:Main1B}, for any $\ba\in \C\PP^2$,  we have 
\beq\label{upper-bound}
n\cdot \deg(S_{\Chi}) \  \geq \  \deg(\F)\cdot \deg(\bs)-\sum_{\pp\in Bl(\bs)} m_{\pp}(\F, a_0\bs_0 + a_1\bs_1 + a_2\bs_2),
\eeq
with equality holding for a generic $\ba$. In addition:
\beq\label{lower-bound} n\cdot \deg(S_{\Chi}) \  \leq \  \deg(\F)\cdot \deg(\bs)-\sum_{\pp\in Bl(\bs)} m_{\pp}(\F, \bs_0, \bs_1, \bs_2),  \eeq

\end{corol}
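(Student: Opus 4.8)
The plan is to obtain both inequalities as immediate consequences of the exact degree formula \eqref{Eq:MainThmEq} in Theorem~\ref{Thm:Main1B}, by replacing each local term $\mult_{\pp}(\F,\bs)$ appearing in the sum with the two-sided estimate of Proposition~\ref{prop-bounds}. The only subtlety worth flagging is one of sign bookkeeping: the multiplicity terms enter \eqref{Eq:MainThmEq} with a minus sign, so an \emph{upper} bound on each multiplicity yields a \emph{lower} bound on $n\cdot\deg(S_{\Chi})$, and a lower bound on the multiplicity yields an upper bound on $n\cdot\deg(S_{\Chi})$. Before manipulating the sum I would also record that it is genuinely finite: although indexed by $\pp\in Bl(\bs)$, any point $\pp\notin V(\F)$ has $m_{\pp}(\F,\cdot)=0$ and hence $\mult_{\pp}(\F,\bs)=0$, so only the finitely many points of $V(\F)\cap Bl(\bs)$ contribute, which legitimizes the term-by-term estimates.

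To prove \eqref{upper-bound}, I would apply the right-hand inequality of Proposition~\ref{prop-bounds}, $\mult_{\pp}(\F,\bs)\le m_{\pp}(\F,\,a_0\bs_0+a_1\bs_1+a_2\bs_2)$, which holds for \emph{every} $\ba\in\CP$. Summing over $\pp\in Bl(\bs)$ and negating reverses the direction, and substituting into \eqref{Eq:MainThmEq} gives exactly $n\cdot\deg(S_{\Chi})\ge \deg(\F)\cdot\deg(\bs)-\sum_{\pp} m_{\pp}(\F,\,a_0\bs_0+a_1\bs_1+a_2\bs_2)$. The equality-for-generic-$\ba$ claim then follows from the tightness clause of Proposition~\ref{prop-bounds} together with Lemma~\ref{Lem:GenMult}: for each $\pp$ the set of $\ba$ achieving $\mult_{\pp}(\F,\bs)=m_{\pp}(\F,\,a_0\bs_0+a_1\bs_1+a_2\bs_2)$ is a nonempty Zariski-open subset of $\CP$, and since only finitely many points are involved, the intersection of these open sets is again nonempty open, so a single generic $\ba$ realizes equality at all of them simultaneously.

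For \eqref{lower-bound} I would instead invoke the left-hand inequality of Proposition~\ref{prop-bounds}, namely $m_{\pp}(\langle\F,\bs_0,\bs_1,\bs_2\rangle)\le \mult_{\pp}(\F,\bs)$; negating, summing over $\pp\in Bl(\bs)$, and substituting into \eqref{Eq:MainThmEq} yields $n\cdot\deg(S_{\Chi})\le \deg(\F)\cdot\deg(\bs)-\sum_{\pp} m_{\pp}(\langle\F,\bs_0,\bs_1,\bs_2\rangle)$. I do not anticipate any real obstacle here: all of the mathematical content is already packaged in Theorem~\ref{Thm:Main1B} and Proposition~\ref{prop-bounds}, and what remains is purely the sign accounting and the finiteness observation noted above, so the corollary is essentially a formal deduction.
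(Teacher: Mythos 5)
Your proposal is correct and matches the paper's approach: the paper likewise derives both inequalities directly from Proposition~\ref{prop-bounds} combined with the exact degree formula of Theorem~\ref{Thm:Main1B}, and your extra remarks on sign bookkeeping, finiteness of the sum, and intersecting finitely many Zariski-open sets to get a single generic $\ba$ are valid elaborations of what the paper leaves implicit.
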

\begin{proof}  This is a direct corollary of Proposition~\ref{prop-bounds}  and Theorem~\ref{Thm:Main1B}.\end{proof}

In the following example we show how one can use the bounds in Corollary~\ref{Cor:Bounds} to predict the degree of the signature polynomial and what problems can arise.

\begin{example}
We will illustrate Theorem~\ref{Thm:Main1B}  and Corollary~\ref{Cor:Bounds} by studying 
the signature of the curve $\Chi$ defined by the zero set of the irreducible cubic $$F(x,y)=x^2y+y^2+y+\frac{64}{121}$$
for  the action of the affine  group $\Af(2)$ consisting of linear transformations and translations on $\C^2$. 
We will use classifying invariants \eqref{eq:InvariantsJet}  introduced in Section~\ref{subsec:Groups} below. If we restrict these invariants to $\Chi$ and cancel common factors, then we can construct a projective extension $\bs$ of $\sigma_X$ where $\deg(\bs)=26$.

In Figure~\ref{Fig:GenLineAndPullback} in red, on the left, the real affine points of $\Chi$ are shown,  while on  the right, the real affine points of its signature curve $\overline{\imS_X}$. In blue, on the right, is the line $V(\La)$ defined by $\ba=[5:1:1]$ and on the left  its pullback  $V(\bs^*\La)$. Under the action of the affine group of transformations on the plane, $\Chi$ has a symmetry group of size two. Then by Theorem~\ref{Thm:NToOne}, the map $\bs$ is generically $2\colon 1$ on $X$.

A direct computation of  the rightmost terms in \eqref{upper-bound} and \eqref{lower-bound} give that 
$$ \sum_{\pp\in Bl(\bs)} m_{\pp}(\F, 5\bs_0 + \bs_1 + \bs_2)=\sum_{\pp\in Bl(\bs)} m_{\pp}(\F, \bs_0, \bs_1, \bs_2)=30$$
This allows us to conclude that
$\sum_{{\pp}\in Bl(\bs)} \mult_{\pp}(\F, \bs)=30.$
Thus by Theorem~\ref{Thm:Main1B} the degree of the signature curve equals 
$\deg(S_{\Chi}) = (3\cdot 26-30)/2 = 24.$

\begin{figure}[H]
  \centering
    \includegraphics[scale=.4]{LStarF1.png}\hspace{.8in} \includegraphics[scale=.4]{SigHyp1.png}
  \caption{${\color{red}\Chi}$ and ${\color{red}\overline{\imS_X}}$ intersected with ${\color{blue} V(\bs^*{\La})}$ and ${\color{blue}V(\La)}$ respectively.}
  \label{Fig:GenLineAndPullback}
\end{figure}

We now show that  a  line $\Lat$ defined by $\tilde \ba=[1:-6:1]$ does not provide us with exact degree count (the corresponding pictures are given by Figure~\ref{Fig:GenLineAndPullback2}). For this choice of line, 
$ \sum_{\pp\in Bl(\bs)} m_{\pp}(\F, \bs_0-6 \bs_1 + \bs_2)=32$
and  Corollary~\ref{Cor:Bounds} tells us only that $23\leq \deg(S_{\Chi})\leq 24$
and that $\tilde \ba$ is non-generic. Indeed, $V(\Lat)$ intersects $\overline{\imS_X}$ at the point $[0:6:1]$ which is not in $\mathcal{S}_{\Chi}$, a property that must be avoided by generic lines. 
\begin{figure}[H]
  \centering
    \includegraphics[scale=.4]{LStarF2.png}\hspace{.8in} \includegraphics[scale=.4]{SigHyp2.png}
  \caption{${\color{red}\Chi}$ and ${\color{red}\overline{\imS_X}}$ intersected with with ${\color{blue} V(\bs^*{\Lat})}$ and ${\color{blue}V(\Lat)}$ respectively.}
  \label{Fig:GenLineAndPullback2}
\end{figure}
\end{example}
%%%%%%
\subsection{Super signature and the generic degree}\label{Sec:super}
%%%%%
Let $\displaystyle{F_{\bc}=\sum_{0\leq i,j \leq d} c_{ij}x^iy^j}$ be a polynomial of degree $\leq d$ with unspecialized  coefficients $c_{ij}\in \C$, where $0\leq i , j \leq d$ and $\bc=(c_{00}:c_{10}:\hdots : c_{0d})$.  It is natural to ask if we could  compute  a {signature polynomial} $S_\bc(\kappa_1,\kappa_2)$ for a curve defined by a polynomial with  unspecified coefficients $\bc$ and what information it encodes. In theory, such \emph{super-signature} polynomial can be  defined in the same way as  signature polynomials for specific curves were defined  in Section~\ref{Sec:Sig} and  computed by elimination. 
In practice, the explicit computation seemed to only be feasible  for  small  groups and small $d$, such as, for instance, quadratics under the special Euclidean action. We also know that specialization does not always commute with elimination and, therefore, we can not
 expect that substitution of a specific value  $ \bc=\bc_0$ into the super-signature polynomial will produce a signature of an algebraic curve $X_{\bc_0}$ defined by  $F_{\bc_0}$ even if $X_{\bc_0}$ happens to be an irreducible  non-exceptional curve. However, we can show that this is the case  generically.

 %%%%
\noindent To give a rigorous definition of the super-signature polynomial, we view 
$$F(\bc,x,y)=\sum_{0\leq i,j \leq d} c_{ij}x^iy^j$$ as  a polynomial of degree $d+1$ in $\C[\bc,x,y]$, while $F_{\bc}\in \C[x,y]_{\leq d}$ denotes its specialization. Then $\mathcal Y=V(F)$ is a variety in  $\C^N\times \C^2$, where $N=\left(\begin{matrix} d+2\\ 2 \end{matrix}\right)$.
\beq\label{eq-Y}\mathcal Y=\{(\bc,x,y)\,|\, \bc\in \C^N, (x,y)\in V(F_\bc)\}. \eeq
Let
$j^n(F):\mathcal Y \dashrightarrow\C^{n+2}$ be the rational map defined by the rational functions of the partials of $F_\bc(x,y)$ as in \eqref{Eq:PartialDeriv}, with $\bc$ treated as parameters. For a differential function $K$, let 
$$K|_F=K\circ j^n :\mathcal Y \dashrightarrow\C.$$
For a classifying pair of invariants $\I=\{K_1,K_2\}$, consider the rational map $\sigma:\mathcal Y\dashrightarrow \C^N\times \C^2$ defined by
\beq\label{eq-sig}\sigma(\bc,x,y)=\left(\bc,K_1|_P(\lambda,x,y),K_2(\bc,x,y)\right).\eeq
 Denote the minimal polynomial vanishing on the image $\sigma(\mathcal Y)$ as 
$S(\bc,\kappa_1,\kappa_2)\in\C[\bc,\kappa_1,\kappa_2]$ and let
$\mathfrak{S}=V(S)\subset   \C^N\times \C^2$  be its variety.  We call $S$ a \emph{super-signature polynomial} and $\mathfrak{S}$ the \emph{super-signature variety}. 

The following theorem asserts  that for a generic curve $X=V(F)$ of degree fixed $d$, one can substitute the coefficients of $F$ into the super signature polynomial $S$ to obtain $S_X$ of $X$. 
 %%%%%
\begin{theorem}\label{Thm:Specialization} Let $S(\bc,\kappa_1,\kappa_2)$  be  the \emph{super-signature polynomial} for polynomials of a degree $d$, sufficiently large so that non-exceptional curves are generic\footnote{Theorem~\ref{thm-non-excep-generic} guarantees  that for a sufficiently large $d$ a generic curve is non-exceptional.},  under the action of  a group $G$ with a chosen set of classifying invariants $\mathcal I=\{K_1,K_2\}$. For  ${\bc}\in  \C^N$, let $X_{\bc}=V(F_\bc)$ be the corresponding algebraic curve in $\C^2$. The set of points 
$$\left\{\bc\in\C^N| S(\bc,\kappa_1,\kappa_2)=S_{X_{\bc}}(\kappa_1,\kappa_2)\right\}$$ 
is Zariski dense in $\C^N$, where  $S_{X_{\bc}}$ is a signature polynomial of  the curve  $X_{\bc}$\footnote{Recall that for an irreducible curve the signature polynomial is uniquely defined up to  multiplication by a non-zero constant.}.
\end{theorem}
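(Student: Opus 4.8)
The plan is to interpret the super-signature as a genuine signature over the function field of the parameter space and then to \emph{spread out} from the generic point. Write $\pi\colon \C^N\times\C^2\to\C^N$ for the projection onto the $\bc$-coordinates. The map $\sigma$ of \eqref{eq-sig} commutes with $\pi$ (it fixes $\bc$), so it is a rational map of $\Y$ over $\C^N$; since $F(\bc,x,y)$ is irreducible in $\C[\bc,x,y]$ (it is linear in $c_{00}$ with unit leading coefficient), $\Y$ is irreducible, hence $\mathfrak{S}=\overline{\sigma(\Y)}=V(S)$ is irreducible and $S$ is irreducible in $\C[\bc,\kappa_1,\kappa_2]$. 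Because the generic signature is one-dimensional, $\mathfrak{S}$ has codimension one, so $S$ is the minimal defining polynomial of the hypersurface $\mathfrak{S}$. By Chevalley's theorem \cite{Harris} the image $\sigma(\Y)$ is constructible, and for every $\bc$ for which $X_{\bc}$ is irreducible and non-exceptional the fibre of $\sigma(\Y)$ over $\bc$ is, by construction, exactly the signature $\imS_{X_{\bc}}$. Taking closures already gives the easy inclusion $V(S_{X_{\bc}})=\overline{\imS_{X_{\bc}}}\subseteq\mathfrak{S}_{\bc}:=\mathfrak{S}\cap(\{\bc\}\times\C^2)$ for all such $\bc$. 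The theorem therefore reduces to proving the reverse inclusion together with reducedness for generic $\bc$, i.e.\ that $S(\bc,\kappa_1,\kappa_2)$ and $S_{X_{\bc}}(\kappa_1,\kappa_2)$ cut out the same reduced curve.

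To control the fibres uniformly I would pass to the generic point $\eta$ of $\C^N$, with residue field $\mathbb{K}=\C(\bc)=\C(c_{ij})$. The generic fibre of $\pi|_{\Y}$ is the ``generic curve'' $X_\eta=V(F_\eta)\subset\mathbb{A}^2_{\mathbb{K}}$, which is geometrically integral: its irreducibility over $\overline{\mathbb{K}}$ is equivalent to the irreducibility of the generic closed fibre $X_{\bc}$, which holds for generic $\bc$. The restriction of $\sigma$ to $\eta$ is precisely the signature map of $X_\eta$ over $\mathbb{K}$, so $\mathfrak{S}_\eta=V(S_\eta)$, where $S_\eta\in\mathbb{K}[\kappa_1,\kappa_2]$ is $S$ read over $\mathbb{K}$, is the signature curve of $X_\eta$. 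Being the closed image of the geometrically integral variety $X_\eta$ under a $\mathbb{K}$-morphism, $\mathfrak{S}_\eta$ is itself geometrically integral; in particular $S_\eta$ is absolutely irreducible over $\mathbb{K}$, and (by Lemma~\ref{Lem:ZeroDimSig}, for $d$ large enough that the generic curve has finite symmetry group) it depends nontrivially on $\kappa_2$, so $\mathfrak{S}_\eta$ is one-dimensional.

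The key step is then a spreading-out argument: the set of $\bc\in\C^N$ for which the fibre $\mathfrak{S}_{\bc}$ is geometrically integral of dimension one is constructible, and it contains the generic point $\eta$ by the previous paragraph; hence it contains a dense Zariski-open subset $U\subseteq\C^N$ (generic flatness together with the constructibility of the geometrically-integral locus of the fibres of $\pi|_{\mathfrak{S}}$, see \cite{Ha77}). For $\bc\in U$ the fibre $\mathfrak{S}_{\bc}=V\!\left(S(\bc,\kappa_1,\kappa_2)\right)$ is an integral curve containing the irreducible curve $V(S_{X_{\bc}})$, and an irreducible curve contained in an irreducible curve must equal it; reducedness then forces $S(\bc,\kappa_1,\kappa_2)=\lambda\,S_{X_{\bc}}(\kappa_1,\kappa_2)$ for some $\lambda\in\C^*$. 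Intersecting $U$ with the dense open loci on which $F_{\bc}$ is irreducible and $X_{\bc}$ is non-exceptional (the latter nonempty by Theorem~\ref{thm-non-excep-generic} for the stated range of $d$) yields a dense open, hence Zariski-dense, set of $\bc$ on which $S$ specializes to the signature polynomial.

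I expect the spreading-out step to be the main obstacle, since it is precisely where one must guarantee that elimination commutes with specialization: a priori the special fibre $\mathfrak{S}_{\bc}$ could acquire extra one-dimensional components or become non-reduced, which would make $S(\bc,\kappa_1,\kappa_2)$ a proper multiple of $S_{X_{\bc}}$ or introduce spurious branches. The cleanest way to rule this out is the geometric-integrality-of-fibres result invoked above; an equivalent route is to cite a Bertini--Noether type theorem asserting that absolute irreducibility of $S_\eta$ over $\C(\bc)$ is preserved under specialization of $\bc$ outside a proper closed subset.
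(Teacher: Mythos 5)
Your argument is correct, and it diverges from the paper's proof at exactly the step you flag as the main obstacle. The paper handles specialization more elementarily: by Chevalley, $\sigma(\mathcal Y)$ contains $\ssig\setminus\mathcal H$ for some subvariety $\mathcal H$ with $\dim\mathcal H\leq N$; the fibre–dimension theorem then shows that for generic $\bc_0$ the set $\pi^{-1}(\bc_0)\cap\mathcal H$ is finite, so the fibre of $\sigma(\mathcal Y)$ over $\bc_0$ (which lies inside $\overline{\imS_{X_{\bc_0}}}$) is dense in $\ssig|_{\bc=\bc_0}$, forcing $V(S|_{\bc=\bc_0})=V(S_{X_{\bc_0}})$. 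You instead pass to the generic point of $\C^N$, identify $\mathfrak S_\eta$ with the signature curve of the generic-fibre curve over $\C(\bc)$, and spread out geometric integrality of the fibres (equivalently, invoke Bertini--Noether for the absolute irreducibility of $S_\eta$ under specialization). Your route costs heavier machinery --- the constructibility of the geometrically-integral locus is EGA IV material rather than anything in \cite{Ha77} or \cite{Shaf1}, so your citation should be adjusted --- but it buys a genuinely stronger conclusion: it rules out the specialization $S(\bc_0,\kappa_1,\kappa_2)$ being a proper power or a non-reduced multiple of $S_{X_{\bc_0}}$, whereas the paper's dimension count only yields set-theoretic equality of the zero loci and leaves the reducedness of the specialized polynomial implicit (a point that actually matters for the degree count in Theorem~\ref{Thm:Upperb}). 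In that respect your proof is the more complete of the two; just make sure your final open set $U$ is also intersected with the locus where $\Sym(X_{\bc},G)$ is finite, so that $S_{X_{\bc}}$ is defined as a curve's defining polynomial in the first place.
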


%
%%%%%%%
\begin{proof}
The variety $\mathcal Y$ defined by \eqref{eq-Y} is irreducible and so is its image $\sigma(\mathcal Y)$ under the rational map \eqref{eq-sig}. Thus the super-signature polynomial $S(\bc,\kappa_1,\kappa_2)\in  \C[x,y,\bc]$, which is a minimal polynomial vanishing on $\sigma(\mathcal Y)$ is irreducible. By Chevalley's Theorem, the image $\sigma(\mathcal Y)$ is an open dense subset of the super-signature variety $\ssig=V(S)$. Therefore, since $\dim\ssig=N+1$,  there exists a variety $\mathcal{H}\subset \ssig$ such that $\dim\mathcal{H}\leq N$  and $\sigma(\mathcal Y)\supset \ssig\backslash\mathcal{H}$.

Consider a regular  map $\pi:\ssig\rightarrow \C^N$ given by $\pi(\bc,\kappa_1,\kappa_2)=\bc$. From the definition of $\ssig$, it is clear that $\pi$ is surjective. We claim that, for a generic $\bc\in \C^N$, the set  $\pi^{-1}(\bc)\cap \mathcal{H}$ is either empty or finite. Indeed, if   $\overline{\pi(\mathcal{H})}\neq \C^N$, then for a generic $\bc$ lying in  the Zariski open non-empty subset  $\C^N\backslash\overline{\pi(\mathcal{H})}$, the set $\pi^{-1}(\bc)\cap \mathcal{H}$ is empty. If  $\overline{\pi(\mathcal{H})}=\C^N$, then, for a generic choice of $\bc$, the dimension of $\pi^{-1}(\bc)\cap \mathcal{H}$ is given by $\dim\mathcal{H}-N=0$ \cite[Ch. 1, Sec. 5, Theorem 1.25]{Shaf1}, implying that  $\pi^{-1}(\bc)\cap\mathcal{H}$ is either empty or finite. 

 By our assumption on $d$, for a generic point $\bc\in \C^N$, the curve $X_{\bc}$ is irreducible  and  non-exceptional (reducible curves have codimension $d-1$).   Let us fix such generic $\bc_0$ that also satisfies the generic condition in the previous paragraph.  
 As before, let $\sigma_{X_{\bc_0}}\colon X_{\bc}\dashrightarrow\C^2$  denote the signature map  of $X_{\bc_0}$, as given in Definition~\ref{def:sig}. Let $\mathcal Y|_{\bc=\bc_0}$ be the slice of the variety $\mathcal Y$, $\ssig|_{\bc=\bc_0}$  the slice of the  super signature variety and  $S|_{\bc=\bc_0}(\kappa_1,\kappa_2)=S(\bc_0,\kappa_1,\kappa_2)$ be the specialization of the super-signature polynomial.  Then $\ssig|_{\bc=\bc_0}=\{(\bc_0,\kappa_1,\kappa_2)\,|\,(\kappa_1,\kappa_2)\subset V(S|_{\bc={\bc_0}})\}$. Let 
$$\mathcal Z_0=\{(\bc_0,\kappa_1,\kappa_2)\,|\,(\kappa_1,\kappa_2)\subset V(S_{X_{\bc_0}}(\kappa_1,\kappa_2))\}.$$ Then
  $$\sigma(\mathcal Y|_{\bc=\bc_0})\subset \mathcal Z_0\subset  \ssig|_{\bc=\bc_0} \text{ and }  \ssig|_{\bc=\bc_0} \backslash\mathcal  Z_0\subset \left(\pi^{-1}(\bc_0) \cap\mathcal H\right).$$ 
 Since  $\pi^{-1}(\bc_0) \cap\mathcal H$ is at most finite by our assumption on $\bc_0$, it follows that $\mathcal Z_0$ is dense in  $\ssig|_{\bc=\bc_0} $  and, therefore,  $V(S_{X_{\bc_0}}(\kappa_1,\kappa_2))= V(S|_{\bc={\bc_0}})$. 
 \end{proof}�
An immediate corollary of the above theorem is that the signature polynomials of generic curves of fixed degree $d$ share the same monomial support in $\kappa_1,\kappa_2$, and hence have  the same degree. Since signature polynomials (up to overall scaling) characterize equivalence classes of generic curves  of degree $d$, it follows that if we consider the super-signature polynomial as an element of $\C(\bc)[\kappa_1,\kappa_2]$ and divide it by one of its non-zero coefficients $h(\bc)\in \C[\bc]$, the coefficients of the resulting polynomial generate the ring of rational invariants for the action of $G$ on the space  of polynomials $\C[x,y]_{\leq d}$.

Since explicit computation of  such generating sets is known to be a very challenging problem, it is not surprising that    computing super-signature polynomials is also very challenging. Conics under $\SE(2)$ is one of the few examples where the super-signature polynomial can be computed explicitely.

%%%%
\begin{example} Consider an arbitrary quadratic curve

$$
F_{\bc}=c_{00}+c_{10}x+c_{01}y+c_{20}x^2+c_{11}xy+c_{02}y^2.
$$
Let  $\Upsilon_1= c_{02}+c_{20}$,  $\Upsilon_2=4c_{20}c_{02}-c_{11}^2$, and $\Upsilon_3= 4\,c_{{00}}c_{{01}}c_{{20}}-c_{{00}}{c_{{11}}}^{2}-
{c_{{01}}}^{2}c_{{20}}+c_{{01}}c_{{10}}c_{{11}}-c_{{01}}{c_{{10
}}}^{2}$. These are known polynomial invariants  for conics under the $\SE(2)$-action.
For the action of the special Euclidean group $\SE(2)$, using the classifying pair of invariants \eqref{eq:InvariantsJet}  introduced in Section~\ref{subsec:Groups}, the super-signature for conics computed by an elimination algorithm is:

\begin{align*}
S(\bc,\kappa_1,\kappa_2)=2916 &\left( \Upsilon_3\right) ^{2}\kappa_1^6+2916\left(\Upsilon_3\Upsilon_1 \left( 4\left(\Upsilon_1\right)^2 -3\Upsilon_2\right)\right)\kappa_1^5+972 \left( \Upsilon_3 \right) ^{2}\kappa_1^4\kappa_2^2\\
&+729\left( \Upsilon_2 \right) ^{3}\kappa_1^4-972\left(\Upsilon_3\Upsilon_2\Upsilon_1 \right)\kappa_1^3\kappa_2^2+108 \left( \Upsilon_3 \right)^{2}\kappa_1^2\kappa_2^4+4 \left( \Upsilon_3 \right) ^{2}\kappa_2^6.
\end{align*}

Dividing through by $(\Upsilon_3)^2$ produces three distinct  non-constant coefficients listed below with constant multiples omitted: 
\beq \nonumber A_1=\frac{\Upsilon_1 \left( 4\left(\Upsilon_1\right)^2 -3\Upsilon_2\right)}{\Upsilon_3}, \, \quad A_2=\frac{\left( \Upsilon_2 \right) ^{3}}{(\Upsilon_3)^2},\quad A_3=\frac{ \Upsilon_1\,\Upsilon_2 }{\Upsilon_3}.
\eeq 
 This is a generating set for the  field of rational invariants for the action of $\SE(2)$ on the space  of quadratic polynomials, but it is not a minimal generating set  because $A_1=4\frac{(A_3)^3}{A_2}-3\,A_3$.

\end{example}
%%%%

Although computing a super-signature is very challenging, we can use super-signatures to establish theoretical results. Below we  use Theorem \ref{Thm:Specialization} to show that the generic degree  is  the sharp upper bound   for the degrees of signature polynomial.
  Discussion and further implications of the above theorem are explored in \cite{MikeThesis}.
 %%%%
 \begin{theorem}
 \label{Thm:Upperb} Under the assumptions of Theorem~\ref{Thm:Specialization}, for a generic curve of  degree $d$, the degree of its signature polynomial equals to the $(\kappa_1,\kappa_2)$-degree $D$ of the super-signature   polynomial. Moreover, for any  non-exceptional curve $X$ of degree  less or equal than  $d$, the degree of its signature polynomial is less or equal than $D$. 
 \end{theorem}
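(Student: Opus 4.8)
The plan is to derive both assertions from the super-signature polynomial $S(\bc,\kappa_1,\kappa_2)$ together with Theorem~\ref{Thm:Specialization}. Throughout I write $\deg_{\kappa}$ for total degree in the pair $(\kappa_1,\kappa_2)$, so that $D=\deg_{\kappa}S$ by the very definition of the $(\kappa_1,\kappa_2)$-degree.

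First I would settle the generic equality. Writing $S=\sum_{\alpha+\beta\leq D}c_{\alpha\beta}(\bc)\,\kappa_1^{\alpha}\kappa_2^{\beta}$ with $c_{\alpha\beta}\in\C[\bc]$, the definition of $D$ guarantees that some coefficient $c_{\alpha\beta}$ with $\alpha+\beta=D$ is a nonzero polynomial. Hence $U=\{\bc\in\C^N : c_{\alpha\beta}(\bc)\neq 0 \text{ for some }\alpha+\beta=D\}$ is a nonempty Zariski-open set, and $\deg_{\kappa}(S|_{\bc})=D$ for all $\bc\in U$. On the other hand, Theorem~\ref{Thm:Specialization} supplies a Zariski-dense set $U'$ on which $S|_{\bc}=S_{X_{\bc}}$. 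Since $\C^N$ is irreducible, $U\cap U'$ is dense, and there $X_{\bc}$ is non-exceptional with $\deg S_{X_{\bc}}=\deg_{\kappa}(S|_{\bc})=D$. This proves the first assertion.

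For the upper bound I would fix an arbitrary non-exceptional $X=X_{\bc_0}$ of degree at most $d$. If its signature is a single point then $\deg S_{X}=0\leq D$ and there is nothing to prove, so assume the signature is a genuine curve with irreducible defining polynomial $S_{X_{\bc_0}}$. Because $\sigma(\mathcal Y)\subseteq\ssig=V(S)$, specializing at $\bc_0$ shows that $S|_{\bc_0}$ vanishes at every signature point $(K_1|_X(p),K_2|_X(p))$, hence on $\imS_X$ and on its closure $V(S_{X_{\bc_0}})$. Provided $S|_{\bc_0}\not\equiv 0$, the Nullstellensatz combined with the irreducibility of $S_{X_{\bc_0}}$ (which follows from the irreducibility of $X$) forces $S_{X_{\bc_0}}\mid S|_{\bc_0}$, whence $\deg S_{X_{\bc_0}}\leq\deg_{\kappa}(S|_{\bc_0})\leq\deg_{\kappa}S=D$, exactly as claimed.

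The heart of the matter, and the step I expect to be the main obstacle, is therefore ruling out $S|_{\bc_0}\equiv 0$ for a non-exceptional $X_{\bc_0}$. Letting $\pi\colon\ssig\to\C^N$, $\pi(\bc,\kappa_1,\kappa_2)=\bc$, the condition $S|_{\bc_0}\equiv 0$ is precisely that the fiber $\pi^{-1}(\bc_0)$ equals all of $\{\bc_0\}\times\C^2$. Since $\ssig$ is irreducible of dimension $N+1$ (as $\sigma$ keeps $\bc$ fixed, is generically finite, and $\mathcal Y\to\C^N$ has one-dimensional fibers) with $\pi$ dominant and generic fiber of dimension one, the locus $B=\{\bc_0:\pi^{-1}(\bc_0)=\{\bc_0\}\times\C^2\}$ is a proper closed subset: from $B\times\C^2\subseteq\ssig$ one gets $\dim B+2\leq N+1$. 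The genuinely delicate point is to show that $B$ can meet only reducible or exceptional curves, so that every non-exceptional irreducible $X_{\bc_0}$ lies off $B$ and has $S|_{\bc_0}\not\equiv 0$. I would approach this by analyzing the jump of the fiber over $B$: the true signature closure $\overline{\imS_{X_{\bc_0}}}=V(S_{X_{\bc_0}})$ is at most one-dimensional, so the equality $\pi^{-1}(\bc_0)=\{\bc_0\}\times\C^2$ forces the fiber of the closure to strictly exceed the closure of the fiber, a degeneration that I would tie to loss of irreducibility of $F_{\bc_0}$ or to $X_{\bc_0}$ becoming exceptional. The conic computation in the text, where $B=V(\Upsilon_2,\Upsilon_3)$ coincides with the degenerate-conic locus, is the model case and guides what must be established in general.
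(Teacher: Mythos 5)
Your first paragraph (the generic equality $\deg S_{X_{\bc}}=D$) is correct and is essentially the paper's argument: intersect the dense set where the top-degree $\kappa$-coefficients of $S$ survive with the dense set from Theorem~\ref{Thm:Specialization} where $S|_{\bc}$ actually \emph{is} the signature polynomial.

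The upper bound, however, has a genuine gap, and you have located it yourself: everything hinges on showing that $S|_{\bc_0}\not\equiv 0$ for an \emph{arbitrary} non-exceptional irreducible $X_{\bc_0}$, i.e.\ that the bad locus $B=\{\bc: S(\bc,\cdot,\cdot)\equiv 0\}$ misses all such curves. Your dimension count only shows $B$ is a proper closed subset of $\C^N$; it does not prevent $B$ from containing non-exceptional irreducible curves, and your plan to ``tie the fiber jump to loss of irreducibility or to exceptionality'' is a hope, not an argument --- I see no reason a priori why the closure $\ssig$ of $\sigma(\mathcal Y)$ could not contain a vertical plane $\{\bc_0\}\times\C^2$ over some perfectly good curve. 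The paper sidesteps this obstacle entirely with a pencil trick: it joins $X=X_{\bc_0}$ to a generic curve $Y=V(H)$ by the pencil $P_\lambda=(1-\lambda)F+\lambda H$, forms the analogous elimination polynomial $Q(\lambda,\kappa_1,\kappa_2)$ for the irreducible threefold--to--$\C^3$ picture, and observes that $Q$ is irreducible in $\C[\lambda,\kappa_1,\kappa_2]$ and genuinely involves the $\kappa$'s; hence $Q|_{\lambda=\lambda_0}$ can never vanish identically (else $\lambda-\lambda_0$ would divide $Q$). Then $S_{X}\mid Q|_{\lambda=0}$ by your own Nullstellensatz/irreducibility argument, and $\deg_\kappa Q=D$ follows from density of good $\lambda$'s, giving $\deg S_X\le D$. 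In short: reducing from the $N$-dimensional parameter space to a one-dimensional slice is exactly what converts ``the specialization is nonzero generically'' into ``the specialization is nonzero always,'' and that reduction is the missing idea in your proposal. You would either need to adopt it or actually prove your claim about $B$, which the paper does not attempt and which may well be harder than the theorem itself.
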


 %%%
 \begin{proof}
The set of values of $\bc_0\in\C^N$, such that  $\deg S|_{\bc=\bc_0}=D$ is Zariski dense, and so its intersection with the set of  $\bc_0\in\C^N$ in the proof Theorem~\ref{Thm:Specialization}  for which  $ S|_{\bc=\bc_0}$ is the signature polynomial for the curve $X_{\bc_0}$ is also Zariski dense. Thus for a generic curve the degree of its signature polynomial equals to $D$.

To show that $D$ is an upper bound, let $X$ be a non-exceptional curve with defining polynomial $F$, which might not satisfy the generic conditions of the previous paragraph,  and 
  let $Y$ be a non-exceptional curve with defining polynomial $H$, whose coefficients do satisfy these generic conditions. Let  $\bc_0$ and $\bc_1\in \C^N$ denote the vectors of coefficients of polynomials $F$ and $H$, respectively. By our assumptions,  the signature polynomial $S_Y(\kappa_1,\kappa_2)=S(\bc_1,\kappa_1,\kappa_2)$ and $\deg S_Y=D$, but  these conditions may not hold for  $X$ and $\bc_0$.
  
    Consider a pencil of polynomials      $P_\lambda(x,y)=(1-\lambda)F(x,y)+\lambda H(x,y)$ parametrized by  $\lambda \in \C$, let  $\bc_\lambda\in \C^N$ be the corresponding coefficients vector and $X_\lambda\subset\C_2$ be the corresponding algebraic curve. By Theorem~\ref{Thm:Specialization},  there is a Zariski dense neighborhood of $\bc_1$ satisfying the generic conditions of the first paragraph of the proof, and therefore,  $S_{X_\lambda}(\kappa_1,\kappa_2)=S(\bc_\lambda,\kappa_1,\kappa_2)$  and  the degree of  $ S|_{\bc=\bc_\lambda}(\kappa_1,\kappa_2)=S(\bc_\lambda,\kappa_1,\kappa_2)$  is $D$ for   all but finitely many values of $\lambda$, where     $S_{X_\lambda}$ is the signature polynomial of $X_\lambda$.

Since $F$ and $H$ are irreducible, one can easily show that $P(\lambda,x,y)=(1-\lambda)F(x,y)+\lambda H(x,y)$ is irreducible as a polynomial in $\C[\lambda, x,y]$. Let    $\mathcal Z=V(P)\subset \C\times \C^2$ be the irreducible variety it defines. It is easy to verify that 
$$\mathcal Z=\{(\lambda,x,y)\,|\, \lambda\in \C, (x,y)\in V(P_\lambda)\}. $$
Let $j^n(P):\mathcal Z\dashrightarrow\C^{n+2}$ be the rational map defined by the rational functions of the partials of $P_\lambda(x,y)$ as in \eqref{Eq:PartialDeriv}, with $\lambda$ treated as a parameter. For a differential function $K$,  define  $K|_P=K\circ j^n\colon\mathcal Z\dashrightarrow\C$ 

Similarly to the way we introduced  super-signatures in the paragraph preceding  Theorem~\ref{Thm:Specialization}, for a classifying pair of invariants $\I=\{K_1,K_2\}$,  we define a rational map $\tau:\mathcal Z\dashrightarrow \C\times \C^2$ by $$\tau(\lambda,x,y)=\left(\lambda,K_1|_P(\lambda,x,y),K_2|_P(\lambda,x,y)\right).$$ Denote the minimal polynomial vanishing on the image $\tau(\mathcal Z)$ as $Q(\lambda,\kappa_1,\kappa_2)\in\C[\lambda,\kappa_1,\kappa_2]$. Since $\tau(\mathcal Z)$ is the image of an irreducible variety under a rational map, $Q(\lambda,\kappa_1,\kappa_2)$ is irreducible. Since $V(Q)$ is an irreducible variety of dimension $2$ and it  is not equal to the $(\kappa_1,\kappa_2)$-plane in $\C\times \C^2$, for all $\lambda_0\in \C$, $Q|_{\lambda=\lambda_0}(\kappa_1,\kappa_2)=Q(\lambda_0, \kappa_1,\kappa_2)$ is a non zero  polynomial in $\C[\kappa_1,\kappa_2]$. 

For all $\lambda_0$, such that $X|_{\lambda_0}$ is non-exceptional, since specialization of coefficients commutes with differentiation and algebraic operation we have $$\sigma_{X_{\lambda_0}}(X_{\lambda_0})=\{(\kappa_1,\kappa_2)|(\lambda_0,\kappa_1,\kappa_2)\in \tau(\mathcal Z|_{\lambda=\lambda_0})\}.$$ The irreducible signature polynomial $S_{X_{\lambda_0}}$  and the specialization $Q|_{\lambda=\lambda_0}$ are both zero on this  set. Hence $S_{X_{\lambda_0}}$ divides the non-zero polynomial $Q|_{\lambda=\lambda_0}$.
In particular, for $\lambda_0=0$, we have  $S_{X}$ divides  $Q|_{\lambda=0}$.

Using the same argument as in the second paragraph of the proof of Theorem~\ref{Thm:Specialization}, one can show that  the set $\{\lambda\in \C \,|\,S_{X_{\lambda}}=Q(\lambda, \kappa_1,\kappa_2)\}$ is dense in $\C$. From the third paragraph of the current proof, we know that the set of $\{\lambda\in \C \,|\,\deg S_{X_{\lambda}}=D\}$ is dense in $\C$. Combining these two facts, we conclude that  the $(\kappa_1,\kappa_2)$-degree of $Q(\lambda,\kappa_1,\kappa_2)$ equals to $D$. Then $\deg(Q|_{\lambda=0})\leq D$, and since 
$S_{X}$ divides  $Q|_{\lambda=0}$,  we  conclude that $\deg S_X\leq D$.
 \end{proof}

%%%%%%%%%
\section{Classical subgroups of the projective groups}\label{Sec:Projective}
%%%%%%%
In this section, we apply our general results  to the actions of the full projective group and its  affine, special affine,  and special Euclidean subgroups. In Section~\ref{subsec:Groups} we explicitly list classifying pairs and exceptional curves for each  of these groups. In Section~\ref{Sec:GenericDegree}, we derive the  degree formula for signatures  of  generic curves under these actions as a function of the degree of the original curve (Theorem~\ref{Thm:DegBound}), observe that this dependence is quadratic and   show that these generic degrees are sharp upper bounds. Finally, in Section~\ref{Sect:Fermat}, we use Fermat curves to illustrate that  non-generic curves, in particular curves with a large symmetry group, may have much lower degree than generic curves. For arbitrary degree curves in this family, we give formulas of their  projective and affine signature polynomials and observe that the  degrees of these  signatures do not depend on  the degrees of the original curves.    
%%%%%
\subsection{Classifying invariants}
\label{subsec:Groups}

Here we introduce rational classifying pairs of  invariants for the actions  of $\PGL(3)$ and some of its of well-known subgroups: the affine group $\Af(2)$, 
the special affine  group $\SAf(2)$,
and the special Euclidean group  $\SE(2)$. For the treatment of the full Euclidean  and  the similarity groups see \cite{MikeThesis}.

As we discussed at the beginning of Section~\ref{Sec:Curves}, $\PGL(3)$ is the group of automorphisms of $\CP$ and is isomorphic to the quotient group $\mathcal{G}\mathcal{L}(3)\backslash\{\lambda I\}$, where  $\mathcal{G}\mathcal{L}(3)$ denotes the group of $3\times 3$ non-singular matrices, $\lambda\in\C$ is non-zero and $I$ is the identity matrix. 
The actions of    $\PGL(3)$ and its subgroups on $\CP$ and $\C^2$ are  given by \eqref{p-act} and  \eqref{a-act}. 
\begin{definition}
The \emph{affine group}, denoted $\Af(2)$, is the subgroup of $\PGL(3)$ that fixes the line of points $[x_0:x_1:x_2]$ with  $x_0=0$. 
\end{definition}
The affine group  is isomorphic  to a subgroup  of $\mathcal{G}\mathcal{L}(3)$ of matrices with the  first row equal to  $[1,0,0]$. 
It is  a group of linear transformations and translations on $\C^2$.
\begin{definition}
The \emph{special affine group}, denoted $\SAf(2)$, is the subgroup of $\Af(2)$ that preserves area under the action  \eqref{a-act}.
\end{definition}
The {special affine group} is isomorphic  to a subgroup of $\mathcal{G}\mathcal{L}(3)$ of matrices with the first row equal to  $[1,0,0]$ and the determinant equal to 1.
\begin{definition}

The \emph{special Euclidean group}, denoted $\SE(2)$, is the subgroup of $\PGL(3)$  isomorphic to the group of matrices
$$
\left[\begin{matrix}
1 & 0 & 0\\
a & c & s \\
b & -s & c
\end{matrix}
\right], \qquad \text{with $c^2 + s^2 =1$}. 
$$

\end{definition}
The real subset of $\SE(2)$ is  the well-known special  Euclidean group of rotations and translations  on $\R^2$. 

In  \cite{BKH}, the authors used classical non-rational differential invariants  to build two lowest order rational invariants for the projective and affine groups and directly proved that they satisfy the Definition~\ref{def-dsep} of classifying invariants over $\R$  (see Theorem 4 in \cite{BKH}).  Using the same line of argument, we can show that these invariants are classifying over $\C$, and  also produce classifying pairs for the actions of the special affine and the special Euclidean groups over $\C$.  
 The following inductive expressions \cite{faugeras94, kogan03} for classical differential invariants are useful for expressing these pairs in a concise manner.
We start with the classical  Euclidean curvature and arc-length:
\begin{gather}
\label{eq-kappa}
\kappa=\frac{y^{(2)}}{(1+[y^{(1)}]^2)^{3/2}}, \quad ds=\sqrt{1+[y^{(1)}]^2} dx
\end{gather}
and express the special affine curvature and  arc-length in terms of them:
\begin{gather*}%\label{affc}
\mu=\frac{3 \kappa (\kappa_{ss}+3 \kappa^3)-5 \kappa_s^2}{9 \kappa^{8/3}},
\qquad
d\alpha=\kappa^{1/3}ds,
\end{gather*}
where $\kappa_s=\frac{d\kappa}{ds}=\frac 1{\sqrt{1+[y^{(1)}]^2}}\frac{d\kappa}{dx}$. In a similar manner, the projective curvature and arc-length are 
\begin{gather*}%\label{pc}
\eta=\frac{6\mu_{\alpha\alpha\alpha}\mu_\alpha-7 \mu_{\alpha\alpha}^2-9\mu_\alpha^2 \mu}{6\mu_\alpha^{8/3}},
\qquad
d\rho=\mu_\alpha^{1/3}d\alpha.
\end{gather*}

\begin{theorem}
The following are pairs of classifying invariants for the actions of $\SE(2)$, $\SAf(2)$, $\Af(2)$, and $\PGL(3)$  on $\C^2$: 
\beq\label{eq:InvariantsJet}\begin{tabular}{|c|c|c|c|c|}
\hline
&&&&\\
Group & $\SE(2)$&$\SAf(2)$ &$\Af(2)$&$\PGL(3)$ \\
\hline

&&&&\\
$K_1$& 
$\kappa^2=\dfrac{(\Theta_2)^2}{(\Theta_1)^3}$&
 $\mu^3=\dfrac{(\Theta_4)^3}{(\Theta_2)^8}$&  
 $\dfrac {\mu_{\alpha}^2}{\mu^3}=\dfrac{(\Theta_5)^2}{(\Theta_4)^3}$ &
  $\eta^3=\dfrac{(\Theta_7)^3}{(\Theta_5)^8}$\\
&&&&\\
$K_2$&
$\kappa_s=\dfrac{\Theta_3 \ }{(\Theta_1)^3}$&
$\mu_{\alpha}=\dfrac{\Theta_5 \ }{(\Theta_2)^4}$&
$\dfrac{\mu_{\alpha\alpha}}{\mu^2}=\dfrac{\Theta_6 \ }{(\Theta_4)^2}$&
$\eta_{\rho}=\dfrac{\Theta_8 \ }{(\Theta_5)^4}$\\
&&&&\\
\hline
\end{tabular}
\eeq
The explicit formulas for $\Theta$'s in terms of jet coordinates are given in Table~\ref{table:thetas}.
\end{theorem}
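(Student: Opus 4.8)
The plan is to deduce the result from Theorem~\ref{th-class}, which reduces the classifying property of $\mathcal{I}=\{K_1,K_2\}$ to two field-generation statements: that $K_1$ generates $\C(J^{r-1})^G$ and that $\{K_1,K_2\}$ generates $\C(J^r)^G$, where $r=\dim G$ equals $3,5,6,8$ for $\SE(2),\SAf(2),\Af(2),\PGL(3)$ respectively. By Part~2 of Proposition~\ref{inv-field-prop} (generating $\Leftrightarrow$ separating), this is in turn equivalent to verifying Definition~\ref{def-dsep} directly: exhibiting a dense open $W^{r-1}\subseteq J^{r-1}$ on which $K_1$ separates orbits and a dense open $W^{r}\subseteq J^{r}$ on which $\mathcal{I}$ separates orbits. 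This is exactly the route taken in \cite{BKH} over $\R$ for the projective and affine groups, and the first task is to check that the construction goes through over $\C$ and to carry it out for the two remaining groups.

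Before addressing separation I would record the elementary structural facts. Using the inductive expressions \eqref{eq-kappa} and those for $\mu,\mu_\alpha,\eta,\eta_\rho$, each application of a total derivative with respect to an invariant arc-length raises the differential order by one; tracing this through gives $\ord(K_1)=r-1$ and $\ord(K_2)=r$ in every column of \eqref{eq:InvariantsJet}. The expressions in terms of the polynomials $\Theta_i$ of Table~\ref{table:thetas} exhibit each of $K_1,K_2$ as an honest element of $\C(J^r)$, the point being that the chosen powers and ratios cancel all fractional-power (square-root and cube-root) factors carried by the individual classical invariants. Invariance under the prolonged action is inherited from the classical theory: $\kappa$, $\mu$ and $\eta$ are differential invariants of $\SE(2)$, $\SAf(2)$ and $\PGL(3)$ and $ds,d\alpha,d\rho$ the corresponding invariant arc-lengths, so $\kappa_s,\mu_\alpha,\eta_\rho$ are again invariant; for $\Af(2)$ one uses that $\Af(2)/\SAf(2)\cong\C^*$ acts on the special-affine quantities $\mu,\mu_\alpha,\mu_{\alpha\alpha}$ by a weight, and the ratios $\mu_\alpha^2/\mu^3$ and $\mu_{\alpha\alpha}/\mu^2$ are chosen so that these weights cancel, making them fully affine invariant.

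The substance of the proof is the separation claim, which I would establish by moving-frame normalization. On a dense open subset of $J^r$ the group may be used to bring a generic jet to a canonical form by normalizing $r$ of the $r+2$ jet coordinates, after which the two remaining coordinates are governed by the curvature and its invariant derivative; restricting to $J^{r-1}$ leaves a single free coordinate governed by the curvature alone. Thus two generic jets are $G$-equivalent precisely when their canonical forms agree, and one reads off that the orbit space of $J^{r-1}$ (resp.\ $J^{r}$) is one- (resp.\ two-)dimensional, consistent with Theorem~\ref{th-gen}. The delicate point over $\C$ is that the rational invariants are not the classical curvatures themselves but prescribed powers of them ($\kappa^2$, $\mu^3$, $\eta^3$) together with ratios; passing to these single-valued rational combinations a priori risks identifying jets whose curvatures differ by a sign or a root of unity. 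I would show that each such ambiguity is realized by a genuine element of the residual discrete symmetry sitting inside $G$: for $\SE(2)$ the rotation by $\pi$ sends a normalized $2$-jet with curvature $\kappa$ to one with curvature $-\kappa$, so that $\kappa^2$ separates orbits exactly as $\kappa$ would, and analogously the complexified rotation subgroups of $\SAf(2)$, $\Af(2)$ and $\PGL(3)$ realize the sign and root-of-unity multiples produced by taking squares and cubes. Once this matching is in place, equal values of $K_1$ (resp.\ of $K_1$ and $K_2$) on generic jets force $G$-equivalence, establishing separation.

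The main obstacle is precisely this last verification: that the power and ratio combinations forced on us by the demand for rationality do not over-collapse the orbit space, i.e.\ that every branch ambiguity of the multivalued classical curvature is accounted for by an element of $G$. This is where the argument genuinely uses that we work over $\C$ (the complexified rotation subgroups supply the needed discrete symmetries), and where, for $\SAf(2)$ and $\Af(2)$, the cube-root rather than square-root structure must be checked by hand. With separation in hand on the respective dense open sets $W^{r-1}$ and $W^{r}$, Definition~\ref{def-dsep} is satisfied and Theorem~\ref{th-class} yields that $\mathcal{I}=\{K_1,K_2\}$ is classifying for each of the four groups.
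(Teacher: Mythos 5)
Your proposal is correct and follows essentially the same route as the paper: the paper's proof simply cites \cite[Theorem 4]{BKH}, where $\IA$ and $\IP$ are shown to satisfy the separation conditions of Definition~\ref{def-dsep} over $\R$, and asserts that the same line of argument carries over to $\C$ and to $\ISE$, $\ISA$, with details deferred to the supplementary materials. Your outline --- reducing to Definition~\ref{def-dsep} via Theorem~\ref{th-class}, verifying orders, rationality and invariance, and then establishing separation by moving-frame normalization while checking that the sign and root-of-unity ambiguities introduced by the powers $\kappa^2,\mu^3,\eta^3$ are realized by discrete elements of $G$ --- is precisely the argument the paper is invoking, spelled out in more detail than the paper itself provides.
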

We use $\ISE$, $\ISA$, $\IA$, and $\IP$ to denote the respective pairs of classifying invariants in \eqref{eq:InvariantsJet}.
\begin{proof}
In \cite[Theorem 4]{BKH}, $\IA$, and $\IP$ are shown to be classifying  in the real case. The proof for the complex case follows similarly and an analogous argument  can be applied to $\ISE$ and $\ISA$. See \cite{suppl} for details.
\end{proof}

\begin{table}
{\small
\begin{align*}
\Theta_1&=u_1^2+1\hspace{1in}  \Theta_2=u_2 \hspace{1in}  \Theta_3=u_3\Theta_1 - 3 u_1 \Theta_2^2 \\
 \Theta_4&=3u_4u_2-5u_3^2 \hspace{2.15in} \Theta_5=9u_5u_2^2-45u_4u_3u_2+40u_3^3\\
\Theta_6&=9u_6u_2^3-63u_5u_3u_2^2-45u_4^2u_2^2+255u_4u_3^2u_2-160u_3^4\\
\Theta_7&=(9 /2)\left[18u_7u_2^4(\Theta_5)-189u_6^2u_2^6+126u_6u_2^4(9u_5u_3u_2+15u_4^2u_2-25u_4u_3^2)\right.\\
&-189u_5^2u_2^4(4u_3^2+15u_2u_4)+210u_5u_3u_2^2(63u_4^2u_2^2-60u_4u_3^2u_2+32u_3^4)\\
&\left.-525u_4u_2(9u_4^3u_2^3+15u_4^2u_3^2u_2^2-60u_4u_3^4u_2+64u_3^6)+ 11200u_3^8\right]\\
\Theta_8&=(243/2)(u_2^4)\left[2u_8u_2(\Theta_5)^2-8u_7(\Theta_5)(9u_6u_2^3-36u_5u_3u_2^2-45u_4^2u_2^2\right.\\
&+120u_4u_3^2u_2-40u_3^4)+504u_6^3u_2^5-504u_6^2u_2^3(9u_5u_3u_2+15u_4^2u_2-25u_4u_3^2)\\
&+28u_6(432u_5^2u_3^2u_2^3+243u_5^2u_4u_2^4-1800u_5u_4u_3^3u_2^2-240u_5u_3^5u_2+540u_5u_4^2u_3u_2^3\\
&+6600u_4^2u_3^4u_2-2000u_4u_3^6-5175u_4^3u_3^2u_2^2+1350u_4^4u_2^3)-2835u_5^4u_2^4\\
&+252u_5^3u_3u_2^2(9u_4u_2-136u_3^2)-35840u_5^2u_3^6-630u_5^2u_4u_2(69u_4^2u_2^2-160u_3^4-153u_4u_3^2u_2)\\
&\left.+2100u_5u_4^2u_3(72u_3^4+63u_4^2u_2^2-193u_4u_3^2u_2)-7875u_4^4(8u_4^2u_2^2-22u_4u_3^2u_2+9u_3^4)\right]
\end{align*}} \vspace{-.3in}
\caption{Differential functions used  in \eqref{eq:InvariantsJet}. Here $u_k$ denotes $y^{(k)}$.} \label{table:thetas}
\end{table}

\begin{proposition}\label{Prop:NonExcep}
The exceptional curves with respect to  $\IP$, $\IA$, and $\ISA$ 
 are lines and conics. The $\ISE$-exceptional curves are lines. In particular, if $X=V(F)$ is a curve exceptional with respect to the classifying invariants in (\ref{eq:InvariantsJet}) then $F$ has degree at most two.
\end{proposition}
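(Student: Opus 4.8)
The plan is to use that the set of $\I$-regular points of an irreducible curve $\Chi$ is Zariski-open (Definition~\ref{Def:IReg}), so that $\Chi$ is exceptional precisely when it has \emph{no} $\I$-regular point; consequently $\Chi$ is exceptional if and only if at least one of the defining conditions (a)--(c) of $\I$-regularity fails identically on $\Chi$. My first step is therefore to translate each of (a)--(c), for the four invariant pairs in \eqref{eq:InvariantsJet}, into the identical vanishing on $\Chi$ of an explicit differential function built from the $\Theta_i$ of Table~\ref{table:thetas}, using that each $\Theta_i|_\Chi$ is a rational function of $(x,y)$ obtained by implicit differentiation.

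Carrying this out group by group, I expect the binding degeneracies to be the following. For $\ISE$, where $K_1=\kappa^2=\Theta_2^2/\Theta_1^3$ and $K_2=\kappa_s=\Theta_3/\Theta_1^3$, condition (b) excludes only the finite set $\{\Theta_1=0\}$, while condition (c) reduces (using $\partial K_2/\partial y^{(3)}=\Theta_1^{-2}$ and $\partial K_1/\partial y^{(2)}=2\Theta_2/\Theta_1^3$) to $\Theta_2=y^{(2)}\not\equiv 0$; thus $\Chi$ is $\ISE$-exceptional iff $y^{(2)}\equiv 0$ on $\Chi$, i.e. iff $\Chi$ is a line. For $\ISA$, $\IA$, and $\IP$ the controlling quantity is $\Theta_5$, which satisfies $\mu_\alpha=\Theta_5/\Theta_2^4$. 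For $\IP$ the invariant $\eta^3=\Theta_7^3/\Theta_5^8$ is undefined wherever $\Theta_5=0$; for $\IA$ the invariant $K_1=\Theta_5^2/\Theta_4^3$ is then the constant $0$ and $\partial K_1/\partial y^{(5)}\propto\Theta_5$ vanishes, so condition (c) fails; and for $\ISA$ every jet with $\Theta_5=0$ lies outside the maximal domain of separation $W_2$, because there distinct orbits (those of conics and those of the $\mu$-critical points of higher-degree curves) share the invariant values $(K_1,K_2)=(\mu^3,0)$, so condition (b) fails. Hence in all three cases $\Chi$ is exceptional as soon as $\Theta_5\equiv 0$ on $\Chi$.

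To finish I must identify the curves on which these differential functions vanish identically, that is, establish the two facts: $y^{(2)}|_\Chi\equiv 0$ iff $\Chi$ is a line, and $\Theta_5|_\Chi\equiv 0$ (equivalently $\mu_\alpha|_\Chi\equiv 0$) iff $\Chi$ is a conic. The first is immediate. The second is the classical characterization of conics as the only plane curves of constant equi-affine curvature; for an irreducible algebraic curve I would prove it by integrating the equi-affine ODE $\mu=\mathrm{const}$ and showing that its analytic solution branches are arcs of conics, so that an irreducible curve containing such a branch is itself a conic. Granting this, any irreducible $\Chi$ of degree $\ge 3$ is neither a line nor a conic, so both $\Theta_2|_\Chi\not\equiv 0$ and $\Theta_5|_\Chi\not\equiv 0$; since the remaining potential degeneracies ($\Theta_1\equiv 0$, $\Theta_4\equiv 0$) likewise force $\deg\Chi\le 2$ by the same curvature characterizations, $\Chi$ has an $\I$-regular point and is non-exceptional. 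This yields the conclusion $\deg F\le 2$, while the computations above show every line is exceptional and every conic is exceptional for $\IP,\IA,\ISA$ (for $\ISE$ a degree-two curve has $\Theta_2=y^{(2)}\not\equiv 0$ and is non-exceptional, matching the claim that only lines are $\ISE$-exceptional).

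The main obstacle is the precise verification over $\C$ of the conic characterization $\mu_\alpha|_\Chi\equiv 0\iff\Chi$ conic, together with making rigorous the domain-of-separation claim for $\ISA$ (that $\{\Theta_5=0\}$ meets the complement of $W_2$); both require the explicit descriptions of $W_1,W_2$ produced in the proof that the pairs \eqref{eq:InvariantsJet} are classifying (see \cite{BKH,suppl}). The derivative and domain bookkeeping of conditions (a)--(c) is otherwise routine.
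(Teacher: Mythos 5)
Your overall strategy coincides with the paper's: the published proof simply cites Propositions 2 and 3 of \cite{BKH} for $\IA$ and $\IP$, asserts that ``an analogous argument'' handles $\ISA$, and for $\ISE$ records exactly your observation that exceptionality is equivalent to $F_y\equiv 0$, $\Theta_1\equiv 0$, or $\Theta_2\equiv 0$, each of which forces degree at most two. So your reconstruction of the $\Theta_i$-bookkeeping, and of the complex analogue of the constant-affine-curvature characterization of conics via the uniqueness theorem for ODEs, is precisely the content being outsourced, and that part is sound.

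One step does not go through as stated: the claim that for $\ISA$ every jet with $\Theta_5=0$ lies outside a maximal domain of separation ``because distinct orbits share the invariant values $(\mu^3,0)$.'' Sharing invariant values only forces a domain of separation to contain \emph{at most one} of the orbits in the level set $\{K_1=c,\ K_2=0\}$; since the $\SAf(2)$-orbit of the $5$-jet of a non-parabolic conic is $5$-dimensional (the stabilizer of such a jet is finite) and hence dense in that level set, a maximal domain of separation could perfectly well contain that orbit and exclude only the lower-dimensional orbits in its closure. As you correctly compute, condition (c) genuinely holds for non-parabolic conics under $\ISA$ (one has $\partial K_1/\partial y^{(4)}=9\,\Theta_4^{\,2}/(y^{(2)})^{7}\neq 0$ wherever $\mu\neq 0$), so (b) is the only available mechanism, and it must be settled from the explicit descriptions of $W_1,W_2$ in \cite{BKH,suppl} rather than from this orbit-counting heuristic. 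The gap affects only the inclusion ``conics are $\ISA$-exceptional''; the direction actually used in the rest of the paper --- that degree at least three forces non-exceptionality --- survives, though it too ultimately rests on the explicit domains of separation, as both you and the paper acknowledge.
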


\begin{proof}
Propositions 2 and 3 from Section 4.3 in \cite{BKH} show that $\IA$- and $\IP$-exceptional curves are lines and conics and an analogous argument shows that this is the case for $\ISA$-exceptional curves as well. A curve $X=V(F)$ being $\ISE$-exceptional is equivalent to the curve satisfying either $F_y\equiv 0$, $\Theta_1\equiv 0$, or $\Theta_2\equiv 0$, all of which imply $X$ is degree one or two.
\end{proof}

\subsection{The generic signature degree}\label{Sec:GenericDegree}

We derive formulas for the degrees of signatures of generic\footnote{As stated in the introduction, we say that a property holds for a generic curve of degree $d$, if there exists a nonempty Zariski-open subset $\mathcal P_d$ of  $\C[x,y]_{\leq d}$,   such that for  all  $F\in \mathcal P_d$ the property holds for $V(F)$.}
 curves for 
 the four actions discussed in Section~\ref{subsec:Groups} with signature maps based on
 the classifying sets $\ISE, \ISA, \IA, \IP$  given in  \eqref{eq:InvariantsJet}.  
To do so we analyze each term in the   degree formula \eqref{Eq:MainThmEq} of Theorem~\ref{Thm:Main1B}.
We  start by  taking a closer look at the  rational functions defining invariants \eqref{eq:InvariantsJet}.   

%%%%%%%%%%
\begin{lemma}\label{Lem:DeltaPolys}
For a generic polynomial $F\in \C[x,y]$ of degree  $d\geq 3$, 
the restrictions of the differential functions $\Theta_i$ to the curve $V(F)$ 
are equal to rational functions of the form $T_i(x,y)/(F_y)^{d_i}$ with $\deg(T_i)\leq \gdeg_i$ where $\tau_i, d_i$ are given 
as follows: 
\begin{center}
\begin{tabular}{|r|c|c|c|c|c|c|c|c|} \hline
$i$			& $1$		&$2$			&$3$			&$4$			&$5$			&$6$			&$7$			&$8$	\\ \hline
$\gdeg_i $	&$2d-2$		&$3d-4$		&$6d-8$ 		&$8d-12$ 		&$12d-18$	&$16d-24$	&$32 d - 48$	&$ 48 d - 72$\\ \hline
$d_i  $		&$2$			&$3$			&$6$ 		& $8$		&$12$		&$16$		&$32$		&$48$\\ \hline
\end{tabular}
\end{center}
\end{lemma}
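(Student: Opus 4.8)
The plan is to reduce everything to the denominator behaviour of the restricted jet coordinates, then to isolate and prove the $F_y$-cancellations that bring the naive denominator down to $(F_y)^{d_i}$, and finally to read off the degree bound from a weighted-homogeneity property of the $\Theta_i$. First I would record the basic building block: on $X=V(F)$ the jet coordinate $y^{(k)}_X$ equals $P_k/(F_y)^{2k-1}$ for a polynomial $P_k$ with $\deg P_k\le (2k-1)d-(3k-2)$. This follows by induction from \eqref{Eq:PartialDeriv} using the polynomial derivation $\mathcal{D}:=F_y\,\partial_x-F_x\,\partial_y$. Indeed, restricted to $X$ the total derivative acts by $\tfrac{d}{dx}g=\mathcal{D}(g)/F_y$, so $P_1=-F_x$ and $P_{k+1}=\mathcal{D}(P_k)\,F_y-(2k-1)P_k\,\mathcal{D}(F_y)$; since $\mathcal{D}$ raises degree by at most $d-2$, the stated degree bound propagates.

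Substituting these expressions into the explicit polynomial $\Theta_i(u_1,\dots,u_i)$ from Table~\ref{table:thetas} (with $u_k=y^{(k)}$) and clearing to a common denominator gives $\Theta_i|_X=R_i/(F_y)^{N_i}$, where $R_i\in\C[x,y]$ and $N_i=\max_m\sum_k a_{m,k}(2k-1)$ is the largest $F_y$-exponent among the monomials $\prod_k u_k^{a_{m,k}}$ of $\Theta_i$. Note that this already shows the denominator of $\Theta_i|_X$ is a power of $F_y$ \emph{alone}. The whole difficulty is therefore to prove that $(F_y)^{N_i-d_i}$ divides $R_i$, i.e.~that $\Theta_i|_X$ may be written over $(F_y)^{d_i}$ rather than over the naive $(F_y)^{N_i}$; this is the step I expect to be the main obstacle.

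To prove the cancellation cleanly I would pass to the Euclidean arc-length derivation. On $X$ one has $\Theta_1|_X=(F_x^2+F_y^2)/F_y^2$; writing $Q=F_x^2+F_y^2$ and $W=\sqrt{Q}$, the arc-length derivative restricts to $\tfrac{d}{ds}\big|_X=\mathcal{D}/W$, which carries \emph{no} $F_y$ in its denominator, and $\kappa|_X=P_2/W^3$. Consequently every classical invariant entering \eqref{eq:InvariantsJet} — and every intermediate quantity built from $\kappa$ and its arc-length derivatives, such as $\mu\kappa^{8/3}$ or $\mu_\alpha\kappa^4$ — restricts to a rational function whose denominator is a power of $Q$ and of $P_2$ only. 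Each $\Theta_i$ factors, via the relations \eqref{eq:InvariantsJet}, as such an invariant times $(\Theta_1)^{d_i/2}$, and $(\Theta_1|_X)^{d_i/2}=(Q/F_y^2)^{d_i/2}$ is the sole source of $F_y$ in the denominator. Since $Q$ and $P_2$ are coprime to $F_y$ for generic $F$ (because $F_x,F_y$ are then coprime and $P_2\equiv -F_x^2F_{yy}\pmod{F_y}$), the $F_y$-exponent of $\Theta_i|_X$ is at most $d_i$. Combined with the fact that this denominator is a power of $F_y$ alone, we obtain $\Theta_i|_X=T_i/(F_y)^{d_i}$ with $T_i=R_i/(F_y)^{N_i-d_i}\in\C[x,y]$.

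Finally the degree bound drops out of the observation that each $\Theta_i$ is isobaric of weight $w_i$ for the grading $\deg(u_k)=k-1$ (one checks $w_i=0,1,2,4,6,8,16,24$ for $i=1,\dots,8$), so every monomial satisfies $\sum_k a_{m,k}-\sum_k k\,a_{m,k}=-w_i$. Tracking degrees through the substitution gives $\deg R_i\le N_i(d-1)-w_i$, hence $\deg T_i=\deg R_i-(N_i-d_i)(d-1)\le d_i(d-1)-w_i$, which is exactly the value $\gdeg_i$ in the table. As a sanity check I would verify the cancellation for $\Theta_1,\dots,\Theta_4$ directly by congruences modulo powers of $F_y$ (e.g.~$P_{k+1}\equiv(2k-1)F_xF_{yy}P_k\pmod{F_y}$), reserving the arc-length argument for $\Theta_7,\Theta_8$, where term-by-term verification is infeasible. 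The delicate points to get right are that the half-integer powers of $W$ and the fractional powers of $\kappa$ and $\mu_\alpha$ entering the special-affine and projective invariants recombine into genuine polynomials, and that the coprimality statements used above hold on the generic locus of $\C[x,y]_{\le d}$.
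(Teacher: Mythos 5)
Your proposal is correct and arrives at exactly the paper's table, but it proves the crucial step by a genuinely different route. The paper's argument is essentially computational: it records the same building block $y^{(n)}|_X=P_n/(F_y)^{2n-1}$ with $\deg P_n=(2n-1)d-(3n-2)$, substitutes into the formulas of Table~\ref{table:thetas}, and then simply \emph{observes} (deferring verification to the supplementary Maple code) that the resulting numerator is divisible by the extra power of $F_y$ --- e.g.\ for $\Theta_4$ the numerator $3P_4P_2-5P_3^2$ has degree $10d-14$ and is divisible by $F_y^2$, giving $\deg T_4\leq 8d-12$ --- with ``the other differential functions follow similarly.'' You instead supply two structural explanations the paper omits: the cancellation is forced because $\Theta_i$ factors as $(\Theta_1)^{d_i/2}$ times a quantity whose restriction to $X$ carries no $F_y$ in its denominator, and the degree bound $\gdeg_i=d_i(d-1)-w_i$ falls out uniformly from the isobaric weight $w_i$ of $\Theta_i$ under $\deg(u_k)=k-1$ rather than from term-by-term bookkeeping; both computations check out against the table. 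Two points need tightening before the argument is airtight. First, your blanket claim that \emph{every} intermediate invariant restricts with denominator a power of $Q=F_x^2+F_y^2$ and $P_2$ only is false for $\eta$ and $\eta_\rho$ themselves: their denominators also acquire powers of the numerator of $\mu_\alpha|_X$, coming from the $\mu_\alpha^{8/3}$ in the definition of $\eta$ and the $\mu_\alpha^{-1/3}$ hidden in $d/d\rho$. These extra factors do cancel exactly in the combinations that matter, $\Theta_7/\Theta_1^{16}=\eta\,\mu_\alpha^{8/3}\kappa^{32/3}$ and $\Theta_8/\Theta_1^{24}=\eta_\rho\,\mu_\alpha^{4}\kappa^{16}$, but that cancellation has to be tracked explicitly rather than asserted for each invariant separately. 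Second, the coprimality of $P_2$ with $F_y$ rests on $F_y\nmid F_{yy}$, which does hold generically (for generic $F$ the polynomial $F_y$ is irreducible of degree $d-1>\deg F_{yy}$), but should be stated as part of the genericity hypothesis. With those two points made precise, your proof is complete and, unlike the paper's, does not outsource the divisibility claims for $\Theta_5,\dots,\Theta_8$ to a computer algebra system.
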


\begin{proof}
One can check that each derivative function restricted to $\Chi = V(F)$ can be written 
$$
y^{(n)}|_\Chi=\frac{P_n(x,y)}{\left(F_y\right)^{2n-1}} \ \ \text{ where } P_n\in \Q\left[\frac{\partial^{i+j}{F}}{\partial x^i \partial y^j} \ : \ i+j\leq n   \right]
$$
and $P_n(x,y)$ is a polynomial of degree $(2n-1)d-(3n-2)$.
One can evaluate the formulas for $\Theta_1, \hdots, \Theta_8$ given in Table~\ref{table:thetas}. 
For example, plugging in the rational expressions for $y^{(n)}|_X$ to the differential formula for $\Theta_4$ gives 
$\Theta_4 \ = \left( 3P_4 P_2 - 5 (P_3)^2\right) / F_y^{10}$. See \cite{suppl} for explicit computations. 
The numerator has degree $10d-14$, but it is also divisible by $F_y^2$. This gives an expression $\Theta_4 = T_4(x,y)/(F_y)^{8}$
where $T_i$ has degree less than or equal to $8d-12$. 
The arguments for the other differential functions follow similarly.
\end{proof}
Explicit formulas for the polynomials $T_i$ are quite long. A code to compute them can be found in \cite{suppl}. 
Note that for each of the classifying invariants, the partial derivative function $F_y$ cancels out and leaves each invariant as a rational function of the polynomials $T_1,\hdots , T_8$. 
In the following lemma, we use homogenizations of  $T_1,\hdots , T_8$ to write down  projective extensions $\bs$ of the signature maps for each pair   of invariants  \eqref{eq:InvariantsJet}.

%%%%%%
\begin{lemma}\label{Lem:ProjExtAP}
Fix an irreducible polynomial $F\in \C[x,y]$ of degree $d\geq 3$ and let $X = V(F)$. 
For $G = {\mathcal{SE}}, \SAf, \Af, \mathcal P$, let $\sigma_X^G$ denote the signature map given by the invariants $\I^G$ in \eqref{eq:InvariantsJet}. 
Then 
\begin{align}\label{eq:ProjExtSigAP}
\bs^{\mathcal{SE}}&=[\T_1^{\ 3}: x_0^2\T_2^{\ 2}: x_0^2\T_3], &\bs^{\SAf}&=[\T_2^{\ 8}: x_0^4\T_4^{\ 3}: x_0^2\T_2^{\ 4}\T_5],\nonumber \\
\bs^{\Af}&=[\T_4^{\ 3}: \T_5^{\ 2}: \T_4 \T_6], \ \ \ \ \ \ \  \text{ and }&\bs^{\mathcal{P}}&=[\T_5^{\ 8}:\T_7^{\ 3}:\T_8\T_5^{\ 4}]
\end{align}
are projective extensions of the maps $\sigma_X^{\mathcal{SE}}$, $\sigma_X^{\SAf},$ $\sigma_X^{\Af},$ and $\sigma_X^{\mathcal{P}}$, respectively, where for each $i$,
$\T_i $ equals the homogenization, $ x_0^{\gdeg_i}T_i(\frac{x_1}{x_0},\frac{x_2}{x_0}) \in \C[x_0,x_1,x_2]$, of the polynomial $T_i$ from Lemma~\ref{Lem:DeltaPolys}. Moreover,
\[
\deg(\bs^{\mathcal{SE}})  =  6d-6 , \ \deg(\bs^{\SAf}) =  24d-32, \  \deg(\bs^{\Af}) =  24d-36, \  \text{ and } \ \deg(\bs^{\mathcal{P}})= 96d-144.
\]
\end{lemma}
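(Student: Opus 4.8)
The plan is to substitute the rational expressions $\Theta_i|_X = T_i(x,y)/(F_y)^{d_i}$ from Lemma~\ref{Lem:DeltaPolys} directly into the entries of \eqref{eq:InvariantsJet} and then track the cancellation of the powers of $F_y$ together with the homogenization degrees. For each of the four groups $K_1$ and $K_2$ are monomials in the $\Theta_i$, and the exponents in \eqref{eq:InvariantsJet} are arranged precisely so that the power of $F_y$ in the numerator of $K_1|_X$ matches the power in its denominator, and similarly for $K_2|_X$: concretely $3d_1=2d_2$, $8d_2=3d_4$, $2d_5=3d_4$, and $8d_5=3d_7$ govern the four $K_1$'s, while $3d_1=d_3$, $4d_2=d_5$, $2d_4=d_6$, and $4d_5=d_8$ govern the four $K_2$'s. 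For example, for $\SE(2)$ one obtains
$$K_1|_X = \frac{(\Theta_2|_X)^2}{(\Theta_1|_X)^3} = \frac{T_2^2/(F_y)^6}{T_1^3/(F_y)^6} = \frac{T_2^2}{T_1^3}, \qquad K_2|_X = \frac{\Theta_3|_X}{(\Theta_1|_X)^3} = \frac{T_3}{T_1^3}.$$
After cancellation each coordinate of $\sigma_X^G$ becomes a ratio of monomials in $T_1,\dots,T_8$ over a common denominator, which one reads off directly.

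Second, I would convert these affine ratios into homogeneous vectors. Writing $K_1|_X=\bphi_1/\bphi_0$ and $K_2|_X=\bphi_2/\bphi_0$ over the common denominator $\bphi_0$, the triple $[\bphi_0:\bphi_1:\bphi_2]$ represents $\sigma_X^G$ on the affine chart; replacing each $T_i$ by its degree-$\tau_i$ homogenization $\T_i=x_0^{\tau_i}T_i(x_1/x_0,x_2/x_0)$ and comparing the $\tau_i$ from Lemma~\ref{Lem:DeltaPolys}, the three resulting components need not have equal degree, and where they differ I would pad by a suitable power of $x_0$. A short degree count shows that for $\Af(2)$ the components $\T_4^3,\T_5^2,\T_4\T_6$ all have degree $3\tau_4=2\tau_5=\tau_4+\tau_6=24d-36$, and for $\PGL(3)$ the components $\T_5^8,\T_7^3,\T_8\T_5^4$ all have degree $8\tau_5=3\tau_7=\tau_8+4\tau_5=96d-144$, so no padding is needed; whereas for $\SE(2)$ one has $3\tau_1=6d-6$ but $2\tau_2=\tau_3=6d-8$, forcing the $x_0^2$ factors, and for $\SAf(2)$ one has $8\tau_2=24d-32$, $3\tau_4=24d-36$, and $\tau_5+4\tau_2=24d-34$, forcing the $x_0^4$ and $x_0^2$ factors. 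This reproduces exactly the vectors in \eqref{eq:ProjExtSigAP}, and $\deg(\bs^G)$ equals the resulting common component degree recorded in the lemma.

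Third, to confirm that each $\bs^G$ is a projective extension in the sense of the definition preceding the lemma, I would verify the evaluation condition: setting $x_0=1$ gives $\T_i(1,x,y)=T_i(x,y)$ and collapses every $x_0$-padding factor to $1$, so $\bs_1^G(1,p)/\bs_0^G(1,p)$ and $\bs_2^G(1,p)/\bs_0^G(1,p)$ equal the two coordinates of $\sigma_X^G(p)$ at every $p\in X$ where $\sigma_X^G$ is defined and $\bs_0^G(1,p)\neq 0$. Since $F$ is irreducible of degree $d\geq 3$, the denominator, which is a power of $T_1$, $T_2$, $T_4$, or $T_5$, is not identically zero on $X$ (otherwise the corresponding invariant would be identically infinite, contradicting that $\sigma_X^G$ is a well-defined rational map), so this evaluation identity holds on a Zariski-dense subset of $X$, as required.

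The only genuinely delicate step is the degree bookkeeping in the second paragraph: because the $\tau_i$ of Lemma~\ref{Lem:DeltaPolys} are upper bounds rather than exact degrees, one must confirm that after the $F_y$-cancellation the homogenized components align as claimed, so that the $x_0$-padding equalizes their degrees and yields the stated totals. By contrast, the $F_y$-cancellation and the evaluation check are purely formal manipulations.
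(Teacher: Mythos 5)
Your proposal is correct and follows essentially the same route as the paper's proof: cancel the common powers of $F_y$ using the exponents $d_i$ from Lemma~\ref{Lem:DeltaPolys}, homogenize via the $\T_i$, pad with powers of $x_0$ to equalize component degrees, and check the evaluation identity on the affine chart (the paper carries this out explicitly only for $\Af(2)$, citing Proposition~\ref{Prop:NonExcep} for the non-vanishing of the denominators, and declares the other cases analogous). Your degree bookkeeping for all four groups matches the paper's, and the "delicate step" you flag is in fact handled automatically since $\T_i$ is by definition homogeneous of degree $\gdeg_i$ even when $\deg(T_i)<\gdeg_i$.
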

%%%%%
\begin{proof}
First, we note that by Lemma~\ref{Lem:DeltaPolys}, the coordinates of $\bs^{G}$ are homogeneous of the stated degrees and that by Proposition~\ref{Prop:NonExcep}, $X$ is non-exceptional with respect to each of the classifying sets of invariants in (\ref{eq:InvariantsJet}). 
Moreover, with $G = \Af$, for a point $p\in X$  we see that,
\[ 
\sigma_X^{\Af}(p)\ = \
\left( \frac{\Theta_5(p)^2}{\Theta_4(p)^3},  \frac{\Theta_6(p)}{\Theta_4(p)^2}\right) 
\ = \ 
\left( \frac{T_5(p)^2}{T_4(p)^3},  \frac{T_6(p)}{T_4(p)^2}\right) 
\ = \
\left( \frac{\bs^{\Af}_1(1,p)}{\bs^{\Af}_0(1,p)},  \frac{\bs^{\Af}_2(1,p)}{\bs^{\Af}_0(1,p)}\right) 
.\]
Here the middle equality follows from the fact that the factors of $F_y$ given by the degrees $d_i$ in Lemma~\ref{Lem:DeltaPolys} all cancel out in the above expressions. If $\bs^{\Af}(p)$ is not defined then $\Theta_4(p)=0$, meaning $p$ is not $\I$-regular. Thus $\bs^{\Af}(p)$ is defined at all but finitely many points of $X$.
Analogous arguments show that $\bs^{\mathcal{SE}}$, $\bs^{\mathcal{\SAf}},$ and $\bs^{\mathcal{P}}$ are projective extensions of $\sigma^{\mathcal{SE}}_X$, $\sigma^{\mathcal{\SAf}}_X$, and $\sigma^{\mathcal{P}}_X$. 
\end{proof}

We are now ready to analyze the last term in the degree formula \eqref{Eq:MainThmEq}  where the sum of multiplicities is taken  over  the base locus of a  projective extension $\bs$ of the signature map. We first show that, for our choices of projective extensions, all base locus points belonging to a generic curve are ``at infinity.''
%%%%%§

%%%%%%
\begin{lemma}\label{Lem:NoAffineBaseLocusAP}
For a generic polynomial $F\in \C[x,y]_{\leq d}$,\ { $d\geq 4$},
the base locii of the maps $\bs^{\mathcal{SE}}$, $\bs^{\SAf},$ $\bs^{\Af}$ and $\bs^{\mathcal{P}}$ in \eqref{eq:ProjExtSigAP} contain no points of the form $[1:p]\in \C\PP^2$ where $F(p)=0$. 
\end{lemma}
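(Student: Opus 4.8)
The plan is to translate the condition ``$[1:p]\in Bl(\bs^{G})$ with $F(p)=0$'' into the vanishing of $F$ together with two of the differential invariants $\Theta_i$ along the curve, and then run a dimension count on an incidence variety. First I would read off, for each $G$, which homogeneous components of the maps in \eqref{eq:ProjExtSigAP} must vanish at an affine point $[1:p]$, where the factors $x_0$ are nonzero. This gives the conditions $T_1(p)=T_2(p)=T_3(p)=0$ for $\mathcal{SE}$, $T_2(p)=T_4(p)=0$ for $\SAf$, $T_4(p)=T_5(p)=0$ for $\Af$, and $T_5(p)=T_7(p)=0$ for $\mathcal P$. Setting aside the finitely many points of $V(F)$ with $F_y(p)=0$, on the locus $F_y(p)\neq 0$ each equation $T_i(p)=0$ is equivalent to $\Theta_i|_{V(F)}(p)=0$, i.e. to $j^{m}_{V(F)}(p)$ lying in the jet locus $\widetilde B=V(\Theta_i,\Theta_j)\subset J^{m}$, where $m\leq 7$ is the largest relevant order. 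A short computation with Table~\ref{table:thetas} shows $\widetilde B$ has codimension two: since $\Theta_3=u_3\Theta_1-3u_1\Theta_2^{\,2}$ vanishes automatically on $\{\Theta_1=\Theta_2=0\}$, the $\mathcal{SE}$ locus is $\{u_1^2+1=0,\ u_2=0\}$; for $\SAf$ one has $\{\Theta_2=\Theta_4=0\}=\{u_2=u_3=0\}$; and for $\Af$ and $\mathcal P$ the two invariants involve distinct top-order jet variables and are therefore coprime.

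Next I would introduce the incidence variety
\[\mathcal W=\{([F],p)\in \PP(\C[x,y]_{\leq d})\times\C^2 : F(p)=0,\ F_y(p)\neq 0,\ j^{m}_{V(F)}(p)\in\widetilde B\},\]
sitting inside the irreducible incidence $I=\{([F],p):F(p)=0,\ F_y(p)\neq0\}$, which has dimension $N:=\binom{d+2}{2}$ (the equation $F(p)=0$ cuts a hyperplane in the fibre over each $p\in\C^2$). Since $\overline{\imS_X}$ has an affine base point exactly when $[F]$ lies in the image of the projection $\mathcal W\to\PP(\C[x,y]_{\leq d})$, and this image is constructible, it suffices to prove $\dim\mathcal W\leq N-2<N-1=\dim\PP(\C[x,y]_{\leq d})$. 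Here I would exploit that all four groups contain the translations and that the $\Theta_i$ depend only on $u_1,\dots,u_m$ and not on $(x,y)$: translating $F$ and $p$ simultaneously preserves $\mathcal W$, so all fibres of $\mathcal W\to\C^2$ are isomorphic and $\dim\mathcal W=2+\dim\mathcal W_0$, where $\mathcal W_0$ is the fibre over $p=0$. Thus the lemma reduces to the single estimate $\dim\mathcal W_0\leq N-4$.

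To estimate $\mathcal W_0$ I would study the jet map $g_0\colon A\to\C^m$, $F\mapsto(u_1(0),\dots,u_m(0))$, on $A=\{[F]:F(0)=0,\ F_y(0)\neq0\}$ (of dimension $N-2$), so that $\mathcal W_0=g_0^{-1}(\overline B)$ for the codimension-two set $\overline B=V(\Theta_i,\Theta_j)\subset\C^m$. By Lemma~\ref{lem:JetGen} (applicable since $m\leq 7\leq\binom{d+2}{2}-2$ for $d\geq 4$), together with translation invariance of jets, the map $g_0$ is dominant. Differentiating $g_0$ shows that for a perturbation $G$ the induced change of the $m$-jet is governed by the $m$-jet at $0$ of $G(x,f(x))$, where $y=f(x)$ is the local graph of $V(F)$; consequently the rank of $dg_0$ depends only on the Taylor coefficients $b_1,\dots,b_m$ of $f$, hence only on $g_0(F)$, and so is constant along the fibres of $g_0$. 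Using $1,x,\dots,x^{\min(d,m-1)}$ together with $f,xf,\dots$ one checks that $dg_0$ has full rank $m$ at a generic point of $\overline B$; since each fibre has dimension $(N-2)-\operatorname{rank}dg_0$, the generic part of $g_0^{-1}(\overline B)$ has dimension $(m-2)+(N-2-m)=N-4$, provided the rank-drop strata meet $\overline B$ in strictly smaller dimension. This would give $\dim\mathcal W_0\leq N-4$ and hence $\dim\mathcal W\leq N-2$.

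The main obstacle is precisely this last fibre estimate: because the relevant jet order $m$ may exceed the curve degree $d$ (already for $\Af$ and $\mathcal P$ when $d=4$), one cannot conclude that the $m+1$ jet conditions are independent by the naive ``new coefficient'' argument, and one must instead control the loci where $\operatorname{rank}dg_0$ drops and verify that they cut the expected codimension inside $\overline B$; this is where the explicit form of the $\Theta_i$ and the ODE-uniqueness underlying Lemma~\ref{lem:JetGen} enter, and the detailed verification for each group can be relegated to the supplementary computations in \cite{suppl}. Finally I would dispose of the finitely many points of $V(F)$ with $F_y(p)=0$: for $\mathcal{SE}$ the leading coordinate is $T_1=F_x^2+F_y^2$, which equals $F_x(p)^2\neq0$ at a smooth point with $F_y(p)=0$, so such points are never in the base locus; for the other three groups the analogous leading invariant is nonzero at these finitely many points for generic $F$, which follows from a lower-dimensional incidence count of the same type. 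Combining the two parts shows that a generic $F$ has no affine base point of $\bs^{G}$ on $V(F)$.
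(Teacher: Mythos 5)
Your identification of the affine base-locus conditions (two of the $T_i$ vanishing together with $F$ at $p$) and your reduction to the fibre over $p=(0,0)$ via translation invariance both match the paper's argument exactly. Where you diverge is in the final step: the paper simply observes that for $p=(0,0)$ the three quantities $c_{00}$, $T_i(0,0)$, $T_j(0,0)$ are explicit polynomials in the coefficients $c_{ij}$ of $F$, and verifies by a direct, finite, computer-assisted computation recorded in \cite{suppl} that they impose independent conditions on $\C[x,y]_{\leq d}$, giving codimension $3$ at the origin and hence codimension $\geq 1$ for the union over all $p$. You instead route the codimension count through the jet map $g_0$ and the codimension-two locus $V(\Theta_i,\Theta_j)$ in jet space.

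That route has a genuine gap, which you yourself flag as ``the main obstacle'': to conclude $\dim g_0^{-1}(\overline B)\leq N-4$ you must control the fibres of $g_0$ over \emph{all} of $\overline B$, including over its non-generic strata and over the locus where $dg_0$ drops rank; dominance of $g_0$ (Lemma~\ref{lem:JetGen}) only gives full rank on a dense open subset of the source, which could a priori be disjoint from $g_0^{-1}(\overline B)$. Deferring this to \cite{suppl} does not close the gap: the supplementary computations verify the independence of the three coefficient polynomials at the origin (the paper's step), not the stratified rank analysis your argument requires. A secondary complication is that your translation from $T_i(p)=0$ to $\Theta_i|_{V(F)}(p)=0$ forces you to excise the points with $F_y(p)=0$ and dispose of them by a separate ``incidence count of the same type'' that is only sketched; the paper avoids this entirely by working with the polynomials $T_i$ themselves, for which no division by $F_y$ is needed. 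The repair is to drop the jet-map detour and verify directly, as the paper does, that $c_{00}$, $T_i(0,0)$ and $T_j(0,0)$ cut out a codimension-three subvariety of the coefficient space for each of the four groups, with the low-degree cases (where monomials of the $T_i$ disappear) checked separately.
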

%%%%%%%
\begin{proof}
We will provide a detailed proof for the affine  group and then show how this argument can be adapted to other groups.
For any point $p\in \C^2$, consider the set 
\[
\mathcal{V}^{\Af}_p \ = \ \left\{F \in \C[x,y]_{\leq d} \,|\,  F(p)=0 \text{ and } [1:p] \text{ belongs to the base locus of }\bs^{\Af}\right\}.
\]
Our goal is to show that the set 
$$\mathcal{V}^{\Af}=  \ \bigcup_{p\in \C^2} \mathcal{V}^{\Af}_p $$
has codimension at least 1 in the linear space of polynomials  $\C[x,y]_{\leq d}$.

For a polynomial $F\in \C[x,y]_{\leq d}$,  a point $[1:p]$ belongs to the base locus of the map $\bs^{\Af}$ if and only if $T_4(p) = T_5(p)=0$. 
Polynomials $T_i$ were introduced in Lemma~\ref{Lem:DeltaPolys},  and they can be expressed as  polynomials function of the partial derivatives of $F$. Therefore, for  $ F(x,y)=\sum_{i+j\leq d}c_{ij}x^iy^j$ with undetermined coefficients and  a fixed  point $p$, expressions 
$T_4(p)$ and $T_5(p)$  can be viewed as polynomials in the coefficients $c_{ij}$. This allows us  to express $\mathcal{V}^{\Af}_p $ as the variety
of  three polynomial expressions $F(p)$, $T_4(p)$, and $T_5(p)$ in the coefficients $c_{ij}$ where $i+j\leq d$.

For $p=(0,0)$, we can use computational algebra techniques to find the codimension of this set.  The condition $F(0,0)=0$ is equivalent to $c_{0,0}=0$.
The highest order partial derivative appearing in the expressions for $T_4$ and $T_5$ is 5. Therefore 
 $T_4(0,0)$ and $T_5(0,0)$ can be written as polynomials of $c_{ij}$ where $i+j\leq 5$.  Moreover, for $d\geq 5$,  these polynomials are independent of $d$. For $d=4$, all monomials involving  $c_{ij}, i+j=5$ will disappear.   For $d\geq 4$, one can check (see \cite{suppl}) that  three polynomials $c_{0,0}$,  $T_4(0,0)$ and $T_5(0,0)$  impose algebraically independent conditions, implying  that 
 $\mathcal{V}^{\Af}_{(0,0)} $ has codimension $3$ in $\C[x,y]_{\leq d}$ (the case $d=4$ has to be checked separately). 

Now we claim that for any point $p\in \C^2$, a polynomial $F$ belongs to $ \mathcal{V}^{\Af}_p$ 
if and only if its image under translation $\overline{F}(x,y) = F(x+p_1,y+p_2) $ belongs to $\mathcal{V}^{\Af}_{(0,0)}$. 
Note that the partial derivatives of $F$ are invariant under translations: 
$\frac{\partial^{i+j} \overline{F}}{\partial x^i y^j}(x,y)=\frac{\partial ^{i+j}F}{\partial x^i y^j}(x+p_1,y+p_2)$ for all $i,j$.
Let $\overline{T}_4,\overline{T_5}$ denote the polynomials obtained from Lemma~\ref{Lem:DeltaPolys} from $\overline{F}$. 
Since these are functions of the partial derivatives of $\overline{F}$, they are also invariant under translations: $\overline{T}_i(x,y)=T_i(x+p_1,y+p_2)$.
Then $F$ belongs to $ \mathcal{V}^{\Af}_p$ if and only if 
$F(p) = \overline{F}(0,0) =0$,  $T_4(p) = \overline{T_4}(0,0) =0$, and $T_5(p) = \overline{T_5}(0,0) =0$,  
which occurs if and only if $\overline{F}\in \mathcal{V}^{\Af}_{(0,0)}$. 
This shows that the set of polynomials not satisfying the condition in the statement of Lemma~\ref{Lem:NoAffineBaseLocusAP}
can be written as 
 \[
\mathcal{V}^{\Af} \ = \ \left\{F\in \C[x,y]_{\leq d} \,|\,F(x+p_1, y+p_2)  \in \mathcal{V}^{\Af}_{(0,0)}\right\}.
\]
Then the dimension of $\mathcal{V}^{\Af}$ is at most $\dim(\mathcal{V}^{\Af}_{(0,0)})+2$. Since $\mathcal{V}^{\Af}_{(0,0)}$ has codimension $3$ in 
the space of polynomials $\C[x,y]_{\leq d}$, this means that $\mathcal{V}^{\Af} $ has codimension $\geq 1$.

A similar argument, based on translation of an affine point $p$ to the origin,  goes through for  other groups,
and the proof of the lemma boils down  to showing
that  
\[
\mathcal{V}^G_{(0,0)} \ = \ \left\{F \in \C[x,y]_{\leq d} \,|\, F(p)=0 \text{ and } [1:p] \text{ belongs to the base locus of }  \bs^{{G}}\right\},
\]
where $\bs^{{G}}$ is the projective extension of the signature map for an appropriate group $G$, has a codimension of at least 3 in $\C[x,y]_{\leq d}$.

In the $\mathcal{SE}(2)$ case and $d\geq 2$, the variety $\mathcal{V}^{\mathcal{SE}}_{(0,0)}$ is  defined by four polynomials  $c_{0,0}$, $T_1(0,0)=(c_{01})^2+(c_{10})^2$, $T_2(0,0)=-2c_{20}(c_{01})^2+2c_{11}c_{10}c_{01}-2(c_{10})^2c_{02}$ and $T_3(0,0)$
in $\C[c_{ij} : i+j\leq 3]$.  Clearly, $T_1(0,0)$, $T_2(0,0)$ and $c_{00}$ impose algebraically independent conditions on $\C[x,y]_{\leq d}$. Thus, by the above argument, $\mathcal{V}^{\mathcal{SE}}_{(0,0)}$ must be of codimension at least  3 in $\C[x,y]_{\leq d}$ for all $d\geq 2$.

 In the ${\SAf}(2)$ case,   the variety $\mathcal{V}^{\mathcal{SA}}_{(0,0)}$ is  defined by three polynomials  $c_{0,0}$, $T_2(0,0)$ and $T_4(0,0)$
in $\C[c_{ij} : i+j\leq 4]$, where for $d\geq 4$, $T_2(0,0)$ and $T_4(0,0)$ are independent of $d$. Algebraic independence of these polynomials  is checked in  \cite{suppl}.  Finally,  for the full projective group $\PGL(3)$,  the variety $\mathcal{V}^{\mathcal{SA}}_{(0,0)}$ is   defined by three polynomials  $c_{0,0}$, $T_5(0,0)$ and $T_7(0,0)$
in $\C[c_{ij} : i+j\leq 7]$, where  $T_5(0,0)$ and $T_7(0,0)$ are independent of $d$  for $d\geq 7$. The algebraic independence of these three polynomials is checked in  \cite{suppl}.   When $d=4,5$, or $6$ some monomials disappear and  so algebraic  independence has to be checked separately. \end{proof}
%%%
As a side remark, we point out that under the $\mathcal{SE}(2)$-action, a generic curves does not have any base locus points (even at infinity) as shown in Lemma~\ref{lem:Multiplicities} below. 
%%%%%

\begin{lemma}\label{lem:LocalParamInf}
Let $\F\in \C[x_0,x_1,x_2]_{d}$ be a generic homogeneous polynomial of degree $d$ satisfying two generic conditions: 
\begin{enumerate}
\item [(i)] $\F(0,0,1)\neq 0$, 
\item[(ii)] the discriminant of the univariate polynomial $F(0,1,x_2)$ is nonzero.
\end{enumerate}
Then neighborhood of any point $\pp=[0:p_1:p_2]$ in $V(\F)$ can be parametrized by $t\mapsto [\alpha(t)]$ where 
\beq\label{eq-alpha}
\alpha(t) \  = \  \left( t,  1,  \sum_{j=0}^{\infty}a_j t^j\right) \ \ \in \ \ \C[[t]]^3.
\eeq
Moreover, for any homogeneous polynomial $\G\in \C[x_0,x_1,x_2]$, the intersection multiplicity of $\F$ and $\G$ at $\p$ is given by 
$\val(\G(\alpha))$.
\end{lemma}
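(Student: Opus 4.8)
The plan is to reduce the projective statement to an affine computation in the chart $\{x_1\neq 0\}$ and then invoke the valuation formula \eqref{eq-val-mult}. First I would use condition (i) to justify the normalization $p_1=1$. Indeed, any point $[0:p_1:p_2]\in V(\F)$ with $p_1=0$ would equal $[0:0:1]$, and $\F(0,0,1)\neq 0$ rules this point out; hence every point of $V(\F)$ on the line $\{x_0=0\}$ has $p_1\neq 0$ and can be written as $[0:1:p_2]$ after scaling. Setting $x_1=1$, such points correspond exactly to the roots $p_2$ of the univariate polynomial $F(0,1,x_2)$.

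Next I would produce the parametrization using condition (ii). Write $\hat\F(x_0,x_2)=\F(x_0,1,x_2)$ for the dehomogenization in this chart. Since the discriminant of $F(0,1,x_2)$ is nonzero, $p_2$ is a simple root, so $\frac{\partial \hat\F}{\partial x_2}(0,p_2)=\frac{\partial \F}{\partial x_2}(0,1,p_2)\neq 0$; in particular $\pp$ is a smooth point of $V(\F)$. The implicit function theorem then lets me solve $\hat\F(x_0,x_2)=0$ for $x_2=a(x_0)$ as a convergent power series with $a(0)=p_2$, giving the local parametrization $t\mapsto(t,a(t))$ of $V(\hat\F)$, equivalently the homogeneous parametrization $\alpha(t)=(t,1,a(t))$ of \eqref{eq-alpha}. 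The non-vanishing of $\partial_{x_2}\hat\F$ is precisely what guarantees that $x_0$ serves as a local coordinate and that $a(t)$ is an honest power series rather than a Puiseux series with fractional exponents; this is the one place where smoothness (condition (ii)) is essential.

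Finally, for the multiplicity claim I would unwind Definition~\ref{def-mult}: since $p_1\neq 0$, the projective multiplicity $m_\pp(\F,\G)$ equals the affine intersection multiplicity $m_{(0,p_2)}(\hat\F,\hat\G)$ of the dehomogenizations in the chart $x_1=1$. Because $\partial_{x_2}\hat\F(0,p_2)\neq 0$, the parametrization $(t,a(t))$ is exactly of the form required by \eqref{eq-val-mult} (with $x_0$ playing the role of the free variable), so that $m_{(0,p_2)}(\hat\F,\hat\G)=\val\bigl(\hat\G(t,a(t))\bigr)=\val\bigl(\G(\alpha)\bigr)$. Combining the two equalities yields $m_\pp(\F,\G)=\val(\G(\alpha))$. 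I expect the only real subtlety to be bookkeeping rather than difficulty: making sure the chart $x_1=1$ (not $x_0=1$) is used throughout, that (i) eliminates the single bad point $[0:0:1]$ on the line at infinity, and that (ii) delivers both smoothness and the simple-root condition needed to pass from the general formula \eqref{eq-val-mult} to the specific power-series form \eqref{eq-alpha}.
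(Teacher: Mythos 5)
Your proposal is correct and follows essentially the same route as the paper's proof: normalize $p_1=1$ using condition (i), use the nonvanishing discriminant in (ii) to get a simple root and hence $\frac{\partial \F}{\partial x_2}(\pp)\neq 0$, apply the implicit function theorem to obtain the power series $a(t)$, and invoke the valuation formula \eqref{eq-val-mult} for the multiplicity claim. The only difference is that you unwind the reduction of the projective multiplicity to the affine chart $x_1=1$ explicitly, whereas the paper delegates the multiplicity statement to a citation of Fischer, \S 8.4.
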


\begin{proof}
 Consider a point $\pp=[0:p_1:p_2]$ in $V(\F)$. From the first assumption it follows that $p_1\neq 0$ and  thus we can take ${p_1} = 1$. 
From the second assumption it follows  the restriction  $H = \F(v,1,w) \in \C[v,w]$ satisfies $H_w(0,p_2) = \frac{\partial \F}{\partial x_2}(\p)\neq 0$.  Therefore,  in some neighborhood of $(0,p_2)$, the curve $V(H)\subset \C^2$ agrees with the graph $w = f(v)$ of an analytic function $f$. 
We obtain $\alpha$ as a power series expansion of this function with $a_j = \frac{f^{(j)}(0)}{j!}$.
For the claim that the intersection multiplicity of $\F$ and $\G$ is given by $\val(\G(\alpha))$, 
see \cite[\S8.4]{Fischer}.
\end{proof} 

%%%%%

\begin{lemma}\label{lem:GenParCoefficients} 
For $d\geq 3$, a generic point $(a_0,\hdots, a_{8}) \in\C^9$ can be extended to the coefficients of the parametrization \eqref{eq-alpha} for some $\F\in \C[x_0,x_1,x_2]_d$  satisfying conditions of   
Lemma~\ref{lem:LocalParamInf}. 
\end{lemma}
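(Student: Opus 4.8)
The plan is to recognize this statement as Lemma~\ref{lem:JetGen} transported to the line at infinity, together with a genericity argument for the two hypotheses of Lemma~\ref{lem:LocalParamInf}. First I would pass to the affine chart $x_1=1$ with coordinates $(v,w)=(x_0/x_1,x_2/x_1)$. There a degree-$d$ form $\F$ corresponds to its dehomogenization $H(v,w)=\F(v,1,w)$, the point $[0:1:a_0]$ becomes $(v,w)=(0,a_0)$, and the parametrization \eqref{eq-alpha} is exactly the graph $w=a(v)=\sum_j a_j v^j$ of $V(H)$ through $(0,a_0)$, so that the $a_j$ are its Taylor coefficients. Since $a^{(k)}(0)=k!\,a_k$, the diagonal map $\Lambda\colon(a_0,\dots,a_8)\mapsto(0!\,a_0,1!\,a_1,\dots,8!\,a_8)$ is a linear automorphism of $\C^9$ carrying the tuple of Taylor coefficients to the tuple of derivative values $y^{(k)}$ used in Lemma~\ref{lem:JetGen}, and hence takes generic tuples to generic tuples.

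Because $8\le\binom{d+2}{2}-2$ precisely when $d\ge 3$, I would then invoke Lemma~\ref{lem:JetGen} with $n=8$ in the $(v,w)$-chart: for a generic derivative-value jet it produces an irreducible curve $V(H)$ of degree $d$ through $(0,a_0)$ realizing that jet, with $H_w(0,a_0)\ne 0$. Homogenizing $H$ in $x_1$ yields a form $\F\in\C[x_0,x_1,x_2]_d$ whose local parametrization at $[0:1:a_0]$ has the prescribed coefficients $a_0,\dots,a_8$, but which need not yet satisfy Lemma~\ref{lem:LocalParamInf}. What remains is to arrange conditions (i) and (ii) there, namely $\F(0,0,1)\ne 0$ and that $\F(0,1,x_2)=H(0,w)$ has nonzero discriminant; together these say that $H(0,w)$ has degree $d$ in $w$ with $d$ distinct roots, one of which is $a_0$.

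To secure these I would, rather than use Lemma~\ref{lem:JetGen} as a black box, re-run its incidence-variety argument while restricting to the open locus $\mathcal P^\circ\subset\PP(\C[v,w]_{\le d})$ of irreducible $H$ for which the $w^d$-coefficient is nonzero and $H(0,w)$ has distinct roots. This locus is nonempty (e.g.\ $H=v^d+w^d+1$) and is exactly where (i)--(ii) hold, so every $H$ it carries automatically satisfies Lemma~\ref{lem:LocalParamInf}. The associated incidence variety $\mathcal Y'\subset\mathcal P^\circ\times\C^9$ still has dimension $\binom{d+2}{2}-1$, since $\mathcal P^\circ$ is dense open and the relations $H(0,a_0)=0$ and $a_k=y^{(k)}$ remain algebraically independent, each introducing a new coordinate. \emph{The main obstacle is to show that the projection $\mathcal Y'\to\C^9$ is dominant.} This is the heart of Lemma~\ref{lem:JetGen}: were the image contained in a hypersurface, ODE uniqueness would force at most one degree-$d$ curve through the resulting analytic germ, whence $\dim\mathcal Y'\le 8$, contradicting $\binom{d+2}{2}-1\ge 9$; and this reasoning is insensitive to the open restriction to $\mathcal P^\circ$, so it carries over verbatim. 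Once dominance of $\mathcal Y'$ is established, the fiber over a generic $a\in\C^9$ is nonempty, and pulling back through $\Lambda$ a generic tuple of Taylor coefficients is realized by a form $\F$ meeting all requirements. The genuinely delicate point is the boundary case $d=3$, where $\binom{d+2}{2}-2=8$ makes the realization tight and the fibers of $\mathcal Y'\to\C^9$ finite, so that dominance cannot be obtained by a naive triangular solve for the coefficients of $\F$ and must rest on the dimension count above.
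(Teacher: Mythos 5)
Your proof is correct and follows essentially the same route as the paper: both apply Lemma~\ref{lem:JetGen} with $n=8$ (valid exactly for $d\geq 3$) in the chart $x_1=1$ and homogenize, using the factorial rescaling between Taylor coefficients and the jet coordinates. You are in fact more careful than the paper's short argument, which does not explicitly verify conditions (i)--(ii) of Lemma~\ref{lem:LocalParamInf}; your restriction of the incidence variety to the open locus where those conditions hold, with the dimension count and ODE-uniqueness argument carried over, cleanly closes that small gap.
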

%%%%
\begin{proof}
Note that $n=8$ and $d\geq 3$ satisfy  the assumptions of   Lemma~\ref{lem:JetGen}, implying  that for a generic  point $a\in \C^9$,
 there exists an  irreducible algebraic curve $X\subset\C^2$ of degree $d$, such that   $(0,a_0)\in X$  and $j^{(8)}_X(0,a_0) \ = \ (0,\, a_0,1!\,a_1\hdots, 8!\,a_8)$.  Let $F(x,y)\in\C[x,y]$ be an irreducible polynomial of degree $d$ whose variety is  $X$. It is easy to check that for  the homogenization 
$\F(x_0,x_1,x_2)=x_1^dF\left(\frac {x_0} {x_1},\frac {x_2} {x_1}\right)$, the projective curve $V(\F)$  has the desired parametrization  \eqref{eq-alpha} in a neighborhood of $[0:1:a_0]$.
\end{proof}
%%%%%%
%%%%%
\begin{lemma}\label{lem:DeltaVal}
Let $F\in \C[x,y]_{\leq d}$ be a generic polynomial with degree $d\geq 3$ 
and let $\alpha=(\alpha_0,\alpha_1,\alpha_2)$ denote the parametrization given by Lemma~\ref{lem:LocalParamInf} for its homogenization $\F$. 
For sufficiently small $t\in \C^*$, the Laurent series 
$$
\beta(t)  = t^{-1}(\alpha_1(t), \alpha_2(t)) =  \left( t^{-1},  \sum_{j=-1}^{\infty}a_{j+1} t^{j}\right)
$$
parametrizes the curve $V(F)$. The differential functions $\Theta_i$ along this parametrization satisfy: 
\begin{center}
\begin{tabular}{|c|c|c|c|c|c|c|c|c|} \hline
$i$					& $1$	&$2$			&$3$			&$4$			&$5$			& $6$	&$7$			&$8$		\\ \hline
$\val(\Theta_i(\beta)) $		&$0$		&$3$			&$4$			&$8$ 		&$15$		&$19$	&$40$		&$ 60$\\ \hline
\end{tabular}
\end{center}
\end{lemma}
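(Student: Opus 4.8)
The plan is to read all eight valuations off the Laurent expansions of the jet coordinates $y^{(k)}$ along $\beta$, organizing the computation through the classical invariants $\kappa,\mu,\eta$ so as to expose the cancellations that force the valuations to exceed a naive count. Since $\beta(t)$ is the affine chart representative of the parametrization $\alpha(t)$ of Lemma~\ref{lem:LocalParamInf}, $t$ is a local uniformizer and $\val(\cdot)$ computes the order of vanishing in $t$; equivalently, writing $x=t^{-1}$, it measures the order of decay as $x\to\infty$ along the branch $y=a_0x+a_1+a_2x^{-1}+a_3x^{-2}+\cdots$. From $x=t^{-1}$ we get $\frac{d}{dx}=-t^2\frac{d}{dt}$, and iterating this yields the closed form
\[ y^{(k)}(\beta)=(-1)^k\sum_{j\ge 2}\frac{(j+k-2)!}{(j-2)!}\,a_j\,t^{\,j+k-1}\qquad(k\ge 2), \]
together with $y^{(1)}(\beta)=a_0-\sum_{j\ge 2}(j-1)a_j t^{j}$. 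In particular $\val(y^{(1)})=0$ and $\val(y^{(k)})=k+1$ for $k\ge 2$, each leading coefficient being a nonzero multiple of $a_2$.

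For $\Theta_1,\dots,\Theta_4$ the valuation equals the minimum over the summands and is found by substituting these leading terms: $\Theta_1=u_1^2+1$ has leading term $a_0^2+1$ so $\val(\Theta_1)=0$; $\Theta_2=u_2$ gives $3$; in $\Theta_3=u_3\Theta_1-3u_1\Theta_2^2$ the term $u_3\Theta_1$ (valuation $4$) dominates, giving $4$; and in $\Theta_4=3u_4u_2-5u_3^2$ the two valuation-$8$ terms combine to leading coefficient $(3\cdot 48-5\cdot 36)a_2^2=-36a_2^2\neq 0$, giving $8$. From $\Theta_5$ onward, however, the leading terms of the summands cancel, so I will instead compute the valuations of $\kappa,\mu,\eta$ and their arc-length derivatives and transfer them to the $\Theta_i$ through the algebraic identities in \eqref{eq:InvariantsJet}. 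The needed valuation calculus is immediate from \eqref{eq-kappa}: as $\val(\Theta_1^{-1/2})=0$, the operator $\frac{d}{ds}=\Theta_1^{-1/2}\frac{d}{dx}$ acts on valuations exactly as $\frac{d}{dx}$ (raising a nonzero valuation by $1$, annihilating constants), while $\val(\kappa^{-1/3})=\val(\mu_\alpha^{-1/3})=-1$ makes $\frac{d}{d\alpha}=\kappa^{-1/3}\frac{d}{ds}$ and $\frac{d}{d\rho}=\mu_\alpha^{-1/3}\frac{d}{d\alpha}$ preserve a nonzero valuation but send a valuation-$0$ input to valuation $\ge 2$. Using $\kappa=u_2\Theta_1^{-3/2}$ gives $\val(\kappa)=3$, hence $\val(\kappa_s)=4$, $\val(\kappa_{ss})=5$; substituting into $\mu$ from \eqref{eq:InvariantsJet}, its numerator has valuation $8$ with leading coefficient $(36-45)\kappa_3^2/s_0^2\neq 0$ (where $\kappa_3,s_0$ are the leading coefficients of $\kappa$ and $\Theta_1^{1/2}$) and its denominator also has valuation $8$, so $\val(\mu)=0$.

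The crux is the higher-order fact that $\mu$ reaches its finite limit $\mu_0$ faster than expected, $\val(\mu-\mu_0)=3$, and likewise $\val(\eta-\eta_0)=1$. For $\mu$ this says its $t^1$ and $t^2$ coefficients vanish identically; geometrically it reflects that the branch at infinity osculates the conic $x(y-a_0x-a_1)=a_2$, an $\SAf$-image of a hyperbola and hence of constant special affine curvature, to which $\mu$ is insensitive at low order. Granting this, the operator rules give $\val(\mu_\alpha)=\val(\mu_{\alpha\alpha})=\val(\mu_{\alpha\alpha\alpha})=3$, and substituting into $\eta$ all three numerator terms have valuation $6$; their $t^6$ coefficient is $6CA-7B^2-9A^2\mu_0=-9A^2(\mu_0+\ell^2)=0$, forced by the identity $\mu_0=-\ell^2$, where $A,\,B=-3\ell A,\,C=9\ell^2A$ are the leading coefficients of $\mu_\alpha,\mu_{\alpha\alpha},\mu_{\alpha\alpha\alpha}$ and $\ell=\kappa_3^{-1/3}s_0^{-1}$ is the leading coefficient of $\kappa^{-1/3}\Theta_1^{-1/2}$. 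By the analogous identity the $t^7$ coefficient also vanishes, so the numerator has valuation $8$ and $\val(\eta)=0$; the same expansion gives $\val(\eta-\eta_0)=1$, whence $\val(\eta_\rho)=0$.

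Finally I transfer these valuations back through \eqref{eq:InvariantsJet}, solving in turn: $\kappa^2=\Theta_2^2/\Theta_1^3$ gives $\val(\Theta_2)=3$; $\kappa_s=\Theta_3/\Theta_1^3$ gives $4$; $\mu^3=\Theta_4^3/\Theta_2^8$ gives $\val(\Theta_4)=8$; $\mu_\alpha=\Theta_5/\Theta_2^4$ gives $15$; $\mu_{\alpha\alpha}/\mu^2=\Theta_6/\Theta_4^2$ gives $19$; $\eta^3=\Theta_7^3/\Theta_5^8$ gives $40$; and $\eta_\rho=\Theta_8/\Theta_5^4$ gives $60$, matching the table. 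The main obstacle is exactly the verification of the higher-order cancellations $\val(\mu-\mu_0)=3$ and $\val(\eta-\eta_0)=1$ and the non-vanishing of the surviving leading coefficients; each such coefficient is a polynomial in $a_0,\dots,a_8$, which are generic by Lemma~\ref{lem:GenParCoefficients}, so it suffices to expand $\mu$ and $\eta$ in $t$ to the required order and check that these coefficients are not identically zero, a finite symbolic computation carried out in \cite{suppl} for the heavier functions $\Theta_7,\Theta_8$.
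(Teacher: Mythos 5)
Your proposal is correct and arrives at all eight valuations, but it organizes the computation along a genuinely different route from the paper. The paper's proof derives exactly the Laurent expansion of the jet coordinates $y^{(k)}$ along $\beta$ that you write down, and then substitutes truncations of these series directly into the explicit polynomials $\Theta_1,\dots,\Theta_8$ of Table~\ref{table:thetas}, reading off the order and leading coefficient of each $\Theta_i(\beta)$ (the heavy expansions for $\Theta_5,\dots,\Theta_8$ are relegated to \cite{suppl}); the nonvanishing of the leading coefficients is then supplied by Lemma~\ref{lem:GenParCoefficients} exactly as in your final paragraph. You instead route the computation through the tower $\kappa\to\mu\to\eta$ and the invariant arc-length derivatives, establishing $\val(\kappa)=3$, $\val(\mu)=0$, $\val(\mu-\mu_0)=3$, $\val(\eta)=0$, $\val(\eta-\eta_0)=1$, and transferring back to the $\Theta_i$ through the identities in \eqref{eq:InvariantsJet}; I checked that all seven transfers reproduce the table, and that your identities $\mu_0=-\ell^2$ and $6CA-7B^2-9A^2\mu_0=-9A^2(\mu_0+\ell^2)=0$ are correct. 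What your route buys is an explanation of the cancellations (why $\val(\Theta_5)$ is $15$ rather than the naive $12$) and a far smaller residual symbolic check (expanding $\mu$ to order $t^3$ and $\eta$ to order $t$, instead of the enormous polynomials $\Theta_7,\Theta_8$). Two caveats: the claim that the $t^7$ coefficient of the numerator of $\eta$ vanishes ``by the analogous identity'' is asserted rather than derived, so your proof, like the paper's, is complete only modulo the finite symbolic verification you describe at the end (which must confirm both that certain coefficients vanish identically and that the surviving ones are nonzero polynomials in $a_0,\dots,a_8$); and your blanket rule that $d/d\alpha$ and $d/d\rho$ send a valuation-$0$ input to valuation $\ge 2$ is not right in general --- the correct outputs are $\val(f-f_0)$ and $\val(f-f_0)-1$ respectively --- though you never actually use the incorrect form, since in each instance you compute the exact valuation from $\val(f-f_0)$.
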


\begin{proof}
First let us calculate the image of $\beta$ in the jet space. 
For $(x(t),y(t)) = (t^{-1},t^j)$ with $j\geq 1$, the derivative of $y$ with respect to $x$ equals $-jt^{j+1}$. Repeated applications of $\frac{\partial }{\partial x}$ 
then yields that $y^{(k)}(x)$ equals $(-1)^k t^{j+k}\prod_{i=0}^{k-1}(i+j)$. Then 
for $(x(t),y(t)) = \beta(t)$ and $k\geq 2$, 
$$
y^{(k)}(x) \ \ = \ \  (-1)^k   \cdot \sum_{j=1}^{\infty}a_{j+1} t^{j+k} \cdot \prod_{i=0}^{k-1}(i+j).
$$

We can then evaluate the differential functions $\Theta_1, \hdots, \Theta_8$ on truncations of these formulas, 
where $a_j$ are indeterminates. (See \cite{suppl}.)
For example, evaluating $\Theta_4$ and $\Theta_5$ give
\begin{align*}
\Theta_4(\beta)    \ &= \ -36\cdot a_2 \cdot t^{8} + \text{higher order terms, \ \ \ and  } \\ 
\Theta_5(\beta)   \ &= \  -4320 \cdot (2 a_3^3 - 3 a_2 a_3 a_4 + a_2^2 a_5) \cdot t^{15} + \text{higher order terms.}  
\end{align*}
In each case, the leading coefficients are polynomials of $a_0, \hdots, a_8$. Therefore, by Lemma~\ref{lem:GenParCoefficients} and the genericity of $F$, 
we may assume that these leading coefficients do not vanish. 
\end{proof}
%%%

\begin{lemma}\label{lem:Multiplicities}
For a generic homogeneous polynomial $\F\in \C[x_0,x_1,x_2]_{d}$ with $d\geq 3$ and a point $\p = [0:p_1:p_2]$ in $V(\F)$, 
we have  
\[
\mult_{\p}(\F, \bs^{\mathcal{SE}}) = 0, \ \ \mult_{\p}(\F, \bs^{\mathcal{\SAf}}) = 16, \ \ \mult_{\p}(\F, \bs^{\Af}) = 12, \ \ \text{ and }  \ \ \mult_{\p}(\F, \bs^{\mathcal{P}}) = 72,
\]
where $\bs^{\mathcal{SE}}$, $\bs^{\SAf}$, $\bs^{\Af}$ and $\bs^{\mathcal{P}}$ are the  polynomial vectors given by Lemma~\ref{Lem:ProjExtAP} for $F = \F(1,x,y)$ and the corresponding multiplicities are defined by \eqref{eq:mapMult}. 
\end{lemma}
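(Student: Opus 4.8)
The plan is to reduce each $\mult_{\pp}(\F,\bs^G)$ to a valuation count along the local parametrization of $V(\F)$ at $\pp$, and then to read the needed valuations off Lemma~\ref{lem:DeltaVal}. Since $\F$ is generic we may assume the hypotheses (i)--(ii) of Lemma~\ref{lem:LocalParamInf} hold, so every $\pp=[0:p_1:p_2]\in V(\F)$ admits the parametrization $\alpha(t)=(t,1,\sum_j a_j t^j)$ of \eqref{eq-alpha}. By Lemma~\ref{lem:LocalParamInf}, for any homogeneous $\G$ we have $m_{\pp}(\F,\G)=\val(\G(\alpha))$; applying this to $\G=a_0\bs^G_0+a_1\bs^G_1+a_2\bs^G_2$ gives $m_{\pp}(\F,\G)=\val\big(\sum_k a_k\bs^G_k(\alpha)\big)$. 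The valuation of a sum is always at least the minimum of the valuations of the summands, with equality for generic $\ba$ (the coefficient of the minimal power of $t$ is a nonzero linear form in the $a_k$ whose terms attain the minimum), so the minimum over $\ba\in\C\PP^2$ in \eqref{eq:mapMult} is attained and equals $\min_k\val(\bs^G_k(\alpha))$; this is consistent with Lemma~\ref{Lem:GenMult}. Since $\val(x_0(\alpha))=\val(t)=1$, everything reduces to computing $w_i:=\val(\T_i(\alpha))$.

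The key bridge is between $\T_i$ evaluated on $\alpha$ and $\Theta_i$ evaluated on the affine Laurent parametrization $\beta(t)=(\alpha_1/\alpha_0,\alpha_2/\alpha_0)=t^{-1}(\alpha_1,\alpha_2)$ of Lemma~\ref{lem:DeltaVal}. From the definition of the homogenization $\T_i=x_0^{\gdeg_i}T_i(x_1/x_0,x_2/x_0)$ and $\alpha_0=t$ we get $\T_i(\alpha)=t^{\gdeg_i}T_i(\beta)$. On the curve $T_i=\Theta_i\cdot(F_y)^{d_i}$ by Lemma~\ref{Lem:DeltaPolys}, so $T_i(\beta)=\Theta_i(\beta)\,(F_y(\beta))^{d_i}$, where $\Theta_i(\beta)$ is exactly the quantity whose valuation is tabulated in Lemma~\ref{lem:DeltaVal}. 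It remains to find $\val(F_y(\beta))$: writing $F_y(x,y)=\partial\F/\partial x_2(1,x,y)$ and using that $\partial\F/\partial x_2$ is homogeneous of degree $d-1$ gives $F_y(\beta)=t^{1-d}\,\partial\F/\partial x_2(\alpha)$, and since $\alpha(0)=\pp$ with $\partial\F/\partial x_2(\pp)\neq 0$ (this is what assumption (ii) of Lemma~\ref{lem:LocalParamInf} secures in its proof), we obtain $\val(F_y(\beta))=1-d$. Combining the three facts yields
\[
w_i \ = \ \gdeg_i+\val(\Theta_i(\beta))+d_i(1-d).
\]

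Substituting the values of $\gdeg_i$ and $d_i$ from Lemma~\ref{Lem:DeltaPolys} and of $\val(\Theta_i(\beta))$ from Lemma~\ref{lem:DeltaVal}, the dependence on $d$ cancels in every case, leaving the constants $(w_1,\dots,w_8)=(0,2,2,4,9,11,24,36)$. Feeding these into the coordinates of the four projective extensions from Lemma~\ref{Lem:ProjExtAP} and taking minima finishes the proof: for $\bs^{\mathcal{SE}}=[\T_1^{3}:x_0^2\T_2^{2}:x_0^2\T_3]$ the coordinate valuations are $3w_1=0$, $2+2w_2=6$, $2+w_3=4$, minimum $0$; for $\bs^{\SAf}=[\T_2^{8}:x_0^4\T_4^{3}:x_0^2\T_2^{4}\T_5]$ they are $8w_2=16$, $4+3w_4=16$, $2+4w_2+w_5=19$, minimum $16$; for $\bs^{\Af}=[\T_4^{3}:\T_5^{2}:\T_4\T_6]$ they are $3w_4=12$, $2w_5=18$, $w_4+w_6=15$, minimum $12$; and for $\bs^{\mathcal{P}}=[\T_5^{8}:\T_7^{3}:\T_8\T_5^{4}]$ they are $8w_5=72$, $3w_7=72$, $w_8+4w_5=72$, minimum $72$. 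These are exactly the asserted values.

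The genuinely load-bearing steps are the two bookkeeping identities $\T_i(\alpha)=t^{\gdeg_i}T_i(\beta)$ and $T_i(\beta)=\Theta_i(\beta)(F_y(\beta))^{d_i}$, together with $\val(F_y(\beta))=1-d$, which convert Lemma~\ref{lem:DeltaVal} into the $w_i$. I expect the main subtlety to be justifying that the leading coefficients appearing in Lemma~\ref{lem:DeltaVal} are nonzero for generic $\F$ — this is precisely where the genericity of $\F$ is consumed, through Lemma~\ref{lem:GenParCoefficients} — and that a generic linear combination does not accidentally annihilate the minimal-valuation term. Once the $w_i$ are in hand no cancellation among the (not necessarily $\gcd$-free) coordinates can push the valuation below $\min_k w_i$, so the minima are forced and the multiplicities are determined.
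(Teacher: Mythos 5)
Your proposal is correct and follows essentially the same route as the paper's proof: both reduce $\mult_{\pp}(\F,\bs^G)$ to the minimum of the valuations $\val(\T_i(\alpha))$ via the identities $\T_i(\alpha)=t^{\gdeg_i}T_i(\beta)$ and $T_i=\Theta_i\cdot(F_y)^{d_i}$, compute $\val(F_y(\beta))=-(d-1)$, and read the rest off Lemmas~\ref{Lem:DeltaPolys} and~\ref{lem:DeltaVal}, arriving at the same $(v_1,\dots,v_8)=(0,2,2,4,9,11,24,36)$ and the same minima. Your explicit justification that the generic linear combination attains $\min_k\val(\bs^G_k(\alpha))$, and your derivation of $\val(F_y(\beta))=1-d$ from homogeneity together with condition (ii) of Lemma~\ref{lem:LocalParamInf}, are slightly more detailed than (but equivalent to) the paper's corresponding steps.
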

%%%%%
\begin{proof}
Let $\alpha\in \C[[t]]^3$ be the local parametrization of $V(\F)$ guaranteed by Lemma~\ref{lem:LocalParamInf}. 
For each index $i=1, \hdots, 8$, let $v_i$ denote the valuation of $\T_i(\alpha)$. 
By the same lemma and the formulas in Lemma~\ref{Lem:ProjExtAP}, the desired multiplicities are  
\begin{align}
\nonumber \mult_{\p}(\F, \bs^{\mathcal{SE}}) &= \min\{3v_1, 2+2v_2, 2+ v_3\},  &\mult_{\p}(\F, \bs^{\SAf})  &= \min\{8v_2, 4+3v_4, 2+ 4v_2+v_5\},  \\
\label{eq-mult-g} \mult_{\p}(\F, \bs^{\Af})  &= \min\{3v_4, 2v_5, v_4+v_6\}, \text{ and} \hspace{-.3cm}&\mult_{\p}(\F, \bs^{\mathcal{P}})   &= \min\{8v_5, 3v_7, 4v_5+v_8\}.
\end{align}
Let $\beta \in \C((t))^{2}$ be the tuple of Laurent series given by Lemma~\ref{lem:DeltaVal}. 
Since $\T_i$ is homogeneous of degree $\tau_i$  and  $\alpha = t\cdot (1,\beta)$, we see that 
\[\T_i(\alpha) = \T_i(t ,t \beta) = t^{\gdeg_i} \T_i(1,\beta) = t^{\gdeg_i} T_i(\beta).\]
By genericity, the coefficient of $x^{d-1}$ in $F_y$ is nonzero, meaning that 
the valuation of $F_y(\beta)$ is $-(d-1)$. 
This and the formulas $T_i = \Theta_i\cdot (F_y)^{d_i}$ from Lemma~\ref{Lem:DeltaPolys} give that 
\begin{align*}
v_i = \val(\T_i(\alpha))  =  \gdeg_i + \val(T_i(\beta))  &=   \gdeg_i + \val(\Theta_i(\beta)) + d_i \val(F_y(\beta))\\ &=  \gdeg_i+   \val(\Theta_i(\beta)) - d_i (d-1).
\end{align*}
Then combining the data from Lemmas~\ref{Lem:DeltaPolys} and~\ref{lem:DeltaVal} gives that 
\[
v_1=0,\ \ v_2 = 2,  \ \ v_3=2, \ \ v_4=4, \ \ 	v_5 = 9, \ \ v_6 = 11, \ \ v_7 = 24, \ \text{ and }\  v_8 = 36.
\]
Substitution  of this value in \eqref{eq-mult-g} finishes the proof.
%Then the minimums above are achieved by $3v_1=0$, $8v_2=16$, $3v_4 = 12$, and $8v_5 = 72$. 
\end{proof}

\begin{theorem}\label{Thm:DegBound}
Fix an irreducible polynomial $F\in \C[x,y]_{\leq d}$ of degree $d\geq 4$ and let $X = V(F)$. 
Let $S_X^{\mathcal{SE}}$, $S_X^{\Af}$, $S_X^{\SAf}$, and $S_X^{\mathcal{P}}$ denote the signature polynomials defined by the invariants in \eqref{eq:InvariantsJet}. Then, when the symmetry group of $X$ is finite,
\[
\deg(S_X^{\mathcal{SE}}) \leq  6d^2-6d, \ \
\deg(S_X^{\SAf}), \deg(S_X^{\Af}) \leq 24d^2 -48d,
\ \ \text{ and } \ \ 
\deg(S_X^{\mathcal{P}}) \leq 96d^2- 216d.
\]
Furthermore, these bounds are tight for generic $F\in \C[x,y]_{\leq d}$. 
\end{theorem}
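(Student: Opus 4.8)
The plan is to read the generic signature degree off the formula of Theorem~\ref{Thm:Main1B}, feeding it the explicit data assembled in Lemmas~\ref{Lem:ProjExtAP}, \ref{Lem:NoAffineBaseLocusAP} and~\ref{lem:Multiplicities}, and then to promote this generic value to a universal upper bound by invoking Theorem~\ref{Thm:Upperb}.

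First I would establish tightness. Fix a generic $F\in\C[x,y]_{\leq d}$ of degree $d\geq 4$, let $G$ be one of $\SE(2),\SAf(2),\Af(2),\PGL(3)$, and let $\bs=\bs^{G}$ be the projective extension of $\sigma_X$ from Lemma~\ref{Lem:ProjExtAP}. Since $d\geq 4>2$, Proposition~\ref{Prop:NonExcep} shows that $X=V(F)$ is non-exceptional, and for generic $F$ its symmetry group is trivial, so $n=|\Sym(X,G)|=1$. Triviality is a standard dimension count: curves fixed by a given nontrivial $g\in G$ form a proper subvariety (an eigenvector condition for the induced action of $g$ on $\C[x,y]_{\leq d}$), and letting $g$ range over the at most $8$-dimensional group $G$ still leaves a proper closed symmetry locus once $\binom{d+2}{2}-1\geq 14$, i.e. for $d\geq 4$. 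With $n=1$, Theorem~\ref{Thm:Main1B} gives
\[
\deg(S_X^{G}) \ = \ d\cdot\deg(\bs^{G}) \ - \sum_{\pp\in Bl(\bs^{G})}\mult_{\pp}(\F,\bs^{G}).
\]

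The core of the computation is the multiplicity sum. By Lemma~\ref{Lem:NoAffineBaseLocusAP} no affine point of $V(\F)$ lies in $Bl(\bs^{G})$, and points of $Bl(\bs^{G})$ off the curve contribute nothing since $\mult_{\pp}(\F,\bs^{G})=0$ whenever $\pp\notin V(\F)$; thus only the points at infinity $\pp=[0:p_1:p_2]\in V(\F)$ matter. For generic $F$ the homogenization $\F$ meets $\{x_0=0\}$ transversally in exactly $d$ distinct points (the discriminant condition~(ii) of Lemma~\ref{lem:LocalParamInf}), and at each of these Lemma~\ref{lem:Multiplicities} gives the value $m_G$, where $m_{\SE}=0$, $m_{\SAf}=16$, $m_{\Af}=12$, $m_{\mathcal P}=72$. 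Hence the sum equals $d\cdot m_G$, and substituting the degrees $\deg(\bs^{\SE})=6d-6$, $\deg(\bs^{\SAf})=24d-32$, $\deg(\bs^{\Af})=24d-36$, $\deg(\bs^{\mathcal P})=96d-144$ from Lemma~\ref{Lem:ProjExtAP} produces $6d^2-6d$, $24d^2-48d$, $24d^2-48d$, and $96d^2-216d$ respectively. This settles tightness and identifies each number with the $(\kappa_1,\kappa_2)$-degree $D$ of the corresponding super-signature polynomial.

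For the upper bound I would apply Theorem~\ref{Thm:Upperb}: the quantity $D$ computed above bounds $\deg(S_X^{G})$ for every non-exceptional curve of degree $\leq d$. Any irreducible $F$ of degree $d\geq 4$ defines a non-exceptional curve by Proposition~\ref{Prop:NonExcep}, and the hypothesis that $\Sym(X,G)$ is finite guarantees, via Lemma~\ref{Lem:ZeroDimSig}, that the signature is genuinely a curve with a well-defined degree, so the stated inequalities follow. I expect the main obstacle to be bookkeeping rather than conceptual: namely, arranging all of the genericity requirements — non-exceptionality, trivial symmetry, absence of affine base-locus points, transverse meeting of the curve with the line at infinity in $d$ distinct points, and the generic multiplicity values of Lemma~\ref{lem:Multiplicities} — to hold simultaneously on a common nonempty Zariski-open set, together with confirming that the per-point multiplicity at infinity is uniform across the $d$ points so that the sum is simply $d\cdot m_G$.
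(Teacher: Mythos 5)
Your proposal is correct and follows essentially the same route as the paper: compute the generic degree via Theorem~\ref{Thm:Main1B} with $n=1$, using the projective extensions and base-locus data of Lemmas~\ref{Lem:ProjExtAP}, \ref{Lem:NoAffineBaseLocusAP} and~\ref{lem:Multiplicities}, and then promote this generic value to a universal bound via Theorem~\ref{Thm:Upperb}. The one soft spot is your dimension-count sketch for the triviality of the generic symmetry group, which as written does not handle group elements whose eigenspaces on $\C[x,y]_{\leq d}$ have small codimension (e.g.\ involutions); the paper simply cites the classical fact that a general plane curve of degree at least four has trivial symmetry group.
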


\begin{proof}
First we show that the bounds above are achieved for generic $F\in \C[x,y]_{\leq d}$. 
By Proposition~\ref{Prop:NonExcep}, the curve $X$ is non-exceptional for $\ISE$, $\IA$, $\ISA$, and $\IP$ and we can apply Theorem~\ref{Thm:Main1B}. 
Since $X$ is a general curve of degree $\geq 4$, its symmetry group is trivial and so  $n=1$ \cite{Shok94}. 
Let $\F\in \C[x_0,x_1, x_2]$ denote the homogenization of $F$ and $\Chib = V(\F)\in \C\PP^2$. 
Then by Theorem~\ref{Thm:Main1B},  for a projective extension of a  signature map $\bs$, 
\[
\deg(S_X)  \ =\  \deg(\F) \cdot \deg(\bs) - \sum_{\p\in Bl(\bs)}\mult_{\p}(\F, \bs).
\]
For a generic $\F$, the variety $V(\F)$ contains exactly $d$ points with $x_0=0$. The multiplicities  at each of these points is 
given by Lemma~\ref{lem:Multiplicities} for every group under consideration. By Lemma~\ref{Lem:NoAffineBaseLocusAP}, these are the only points of $V(\F)$ in the base locii 
of the projective extensions $\bs^{\mathcal{SE}}$, $\bs^{\Af}$, $\bs^{\SAf}$, and $\bs^{\mathcal{P}}$.
All together, this gives
\begin{align*}
&\deg(S_X^{\mathcal{SE}}) \  = \ d \cdot (6d-6) - d \cdot (0) \   = \  6d^2-6d, \\
&\deg(S_X^{\SAf})  \! \ = \ d \cdot (24d-32) - d \cdot (16) \   = \  24d^2-48d, \\
&\deg(S_X^{\Af})  \ \ = \ d \cdot (24d - 36) - d \cdot (12) \   = \  24d^2 -48 d, \text{ and } \\
&\deg(S_X^{\mathcal{P}})  \ \ = \ d \cdot ( 96 d-144) - d \cdot (72) \  = \  96d^2-216d.
\end{align*}

 From Theorem~\ref{Thm:Upperb}, these degrees are upper bounds.
\end{proof}

%%%%%%%%%%
We note that for all groups we consider,  for generic curves, the  degree of  the signature curve has  a quadratic dependence  on the degree of the original curve. The symmetry group of a generic  curve is trivial, but many interesting and important  curves have non-trivial symmetry groups. In accordance  with  the degree formula \eqref{Eq:MainThmEq}, these curves have  lower degree signature. The next subsection is devoted to the  Fermat curves family. For this family, in the case of  the projective and affine action,  the growth of the signature curve degree is completely suppressed by  the   increase in the  symmetry group size. 
\bigskip

%%%%%%%%%%%
\subsection{The Fermat curves} \label{Sect:Fermat}The $d$-th degree Fermat curve, denoted in this section by $\Chi_d$, is the zero set over $\C^2$ of the polynomial  $F_d(x,y)=x^d+y^d+1$, whose homogenization is $\F_d(x_0,x_1,x_2)=x_0^d+x_1^d+x_2^d$.

\begin{theorem}\label{Thm:FermatSym}
The symmetry group of the $d$-th degree Fermat curve  with respect to full projective, affine and special Euclidean groups are:
\begin{itemize}
\item  ${\rm Sym}(\Chi_d,\PGL(3))=S_3\rtimes \left(\Z_d\times \Z_d\right)$ of cardinality $6d^2$,
\item  ${\rm Sym}(\Chi_d,\Af(2))=S_2\rtimes \left(\Z_d\times \Z_d\right)$ of cardinality $2d^2$, and 
\item  ${\rm Sym}(\Chi_d,\SE(2))=
\left\{
\begin{array}{ccc} \Z_1 &  \quad  & \text{ of cardinality 1, when $d$ is odd \ }  \\
\Z_2\times \Z_2 &  \quad  & \text{ of cardinality 4, when $d$ is even.}  
\end{array}
\right.$
\end{itemize}
Here $S_k$  is the permutation group over $k$-elements and  $\Z_k$ is  the cyclic groups of $k$-elements.
\end{theorem}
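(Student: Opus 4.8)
The plan is to identify $\Sym(\Chi_d,G)$ with the group of those projective linear automorphisms of the Fermat curve $V(\F_d)\subset\C\PP^2$ that lie in $G$, and then to determine this group by an explicit covariant computation. First I would record the reduction from the affine to the projective picture: a transformation $g\in\PGL(3)$ satisfies $\overline{g\cdot\Chi_d}=\Chi_d$ for the affine action \eqref{a-act} if and only if, regarded as a projective map, $g$ preserves the projective closure $V(\F_d)$. Indeed the affine points of $\Chi_d$ are Zariski dense in $V(\F_d)$ and $g$ is a morphism of $\C\PP^2$, so mapping a dense subset of $\Chi_d$ into $\Chi_d$ forces $g(V(\F_d))\subseteq V(\F_d)$, and invertibility gives equality. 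Hence $\Sym(\Chi_d,G)=\{g\in G\,:\,\F_d\circ A_g=\lambda\,\F_d\text{ for some }\lambda\in\C^*\}$, where $A_g\in\mathcal{GL}(3)$ is a representative of $g$ and $\F_d\circ A_g$ denotes the form $x\mapsto\F_d(A_g x)$.

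Next I would pin down these matrices using the Hessian as a covariant. Since $\F_d=x_0^d+x_1^d+x_2^d$ has diagonal Hessian matrix, $\mathrm{Hess}(\F_d)=\det(\partial_i\partial_j\F_d)=(d(d-1))^3\,(x_0x_1x_2)^{d-2}$, which for $d\geq 3$ is a nonzero form cutting out the coordinate triangle. Using the covariance $\mathrm{Hess}(\F_d\circ A)=(\det A)^2\,\mathrm{Hess}(\F_d)\circ A$ together with $\mathrm{Hess}(\lambda\F_d)=\lambda^3\,\mathrm{Hess}(\F_d)$ and the relation $\F_d\circ A=\lambda\F_d$, I would conclude that $\big((Ax)_0(Ax)_1(Ax)_2\big)^{d-2}$ is proportional to $(x_0x_1x_2)^{d-2}$. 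Because $\C[x_0,x_1,x_2]$ is a unique factorization domain and $x_0x_1x_2$ is squarefree, the three linear forms $(Ax)_i$ must be scalar multiples of a permutation of $x_0,x_1,x_2$; that is, $A$ is monomial with $(Ax)_i=\omega_i x_{\pi(i)}$. Substituting back yields $\omega_i^d=\lambda$ for all $i$, and after normalizing $\lambda=1$ in $\PGL(3)$ the $\omega_i$ are $d$-th roots of unity. This identifies $\Sym(\Chi_d,\PGL(3))$ with the group of monomial matrices with $d$-th-root-of-unity entries modulo scalars, namely $(\Z_d\times\Z_d)\rtimes S_3$ of order $6d^2$.

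I would then carve out the two subgroups. A monomial symmetry lies in $\Af(2)$ precisely when it fixes the line at infinity $\{x_0=0\}$, i.e.\ when $\pi(0)=0$; the permutations fixing the index $0$ form an $S_2$ permuting $\{1,2\}$, so $\Sym(\Chi_d,\Af(2))=(\Z_d\times\Z_d)\rtimes S_2$ of order $2d^2$. Every element of $\SE(2)\subset\Af(2)$ is monomial with $\pi(0)=0$, hence linear with vanishing translation part, so it remains to impose that its affine linear part has the rotation shape $\left(\begin{smallmatrix}c & s\\ -s & c\end{smallmatrix}\right)$ with $c^2+s^2=1$. The diagonal case $(x,y)\mapsto(\zeta_1 x,\zeta_2 y)$ forces $\zeta_1=\zeta_2=\pm1$, while the swap case $(x,y)\mapsto(\zeta_1 y,\zeta_2 x)$ forces $\zeta_2=-\zeta_1=\pm1$; in either case the scalars must also be $d$-th roots of unity. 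Since $-1$ is a $d$-th root of unity exactly when $d$ is even, for odd $d$ only the identity survives, whereas for even $d$ the four maps $\mathrm{id}$, $(x,y)\mapsto(-x,-y)$, $(x,y)\mapsto(y,-x)$, $(x,y)\mapsto(-y,x)$ all qualify, forming the order-four symmetry group generated by the quarter-turn rotation.

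The main obstacle is the monomial-form step: the whole argument rests on recognizing the Hessian as a covariant whose vanishing locus is exactly the coordinate triangle and on the unique-factorization argument that upgrades the proportionality of $(d-2)$-th powers to a permutation-and-scaling statement about the underlying linear forms. Once $A$ is known to be monomial, the remainder is finite-group bookkeeping, together with the parity case analysis for $\SE(2)$; I would also separately verify the low-degree cases (especially $d\leq 3$, where the coordinate-triangle computation degenerates or the curve is a smooth cubic) to delimit exactly the range of $d$ for which the stated cardinalities hold.
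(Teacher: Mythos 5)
Your proposal is correct, and it takes a genuinely different route for the key step. The paper disposes of the $\PGL(3)$ case by citing Tzermias~\cite{Tz95} for the fact that every projective linear automorphism of the Fermat curve is a coordinate permutation composed with scalings by $d$-th roots of unity; you instead prove this from scratch via the Hessian covariant: $\mathrm{Hess}(\F_d)$ is proportional to $(x_0x_1x_2)^{d-2}$, the covariance $\mathrm{Hess}(\F_d\circ A)=(\det A)^2\,\mathrm{Hess}(\F_d)\circ A$ forces $\prod_i (Ax)_i$ to be an associate of $x_0x_1x_2$, and unique factorization then makes $A$ monomial. This buys a self-contained, elementary argument valid uniformly for all $d\geq 3$ (including $d=3$, where the abstract automorphism group of the cubic is infinite and the citation must be interpreted as referring only to linear automorphisms), at the cost of a page of computation the paper avoids. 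The subgroup bookkeeping for $\Af(2)$ and $\SE(2)$ is then essentially the same in both arguments. One substantive point your computation surfaces: for even $d$ you correctly identify the special Euclidean symmetry group as \emph{cyclic} of order four, generated by the quarter-turn $(x,y)\mapsto(y,-x)$, whose square is $(x,y)\mapsto(-x,-y)\neq\mathrm{id}$. The theorem statement and the paper's proof assert $\Z_2\times\Z_2$ generated by two order-two elements, but the element $[x_0:x_1:x_2]\mapsto[x_0:-x_2:x_1]$ in fact has order four in $\PGL(3)$, so the correct group is $\Z_4$; the cardinality $4$, which is the only thing used elsewhere in the paper (e.g.\ in the degree formulas), is unaffected.
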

\begin{proof}  In \cite{Tz95} it has been shown that ${\rm Sym}(\Chi_d,\PGL(3))$ 
 consists of compositions of permutations of the homogeneous coordinates $[x_0:x_1:x_2]$ and transformations scaling the coordinates by $d$-th roots of unity, \textit{i.e.} $[x_0:x_1:x_2]\to [x_0:\omega_1x_1:\omega_2x_2]$, where $\omega_1$ and $\omega_2$ are $d$-th roots of $1$. This shows the first result.  
 Since  ${\rm Sym}(\Chi_d,\Af(2))$ is the subgroup  of ${\rm Sym}(\Chi_d,\PGL(3))$ that fixes the homogenous coordinate $x_0$, in the second result $S_3$ must be replaced with $S_2$. Finally, in the case of the special Euclidean group for odd $d$  there are no non-trivial symmetries, while for even $d$  the symmetry group is generated  by  two independent elements, each of order two, namely  $[x_0:x_1:x_2]\to [x_0: -x_2: x_1]$ and  $[x_0:x_1:x_2]\to [x_0: -x_1: -x_2]$.
 \end{proof}
For the projective and for the affine groups,  the cardinality of the symmetry groups depend quadratically on $d$. 
At the same time Theorem~\ref{Thm:DegBound} shows that the degrees of generic signature curves depend quadratically on $d$.
In fact, these quadratic dependencies cancel, and the degrees of signatures of the Fermat curves for these actions are independent of $d$. 
%%%
\begin{theorem}\label{Thm:Fermat} The signature of  the Fermat curve $V(x^d+y^d+1) \subset \C^2$  has
\begin{itemize}
\item     degree four for all $d\geq 3$ for the $\PGL(3)$-action.  
\item   degree two for $d=3$ and degree three for all $d\geq 4$  for the $\Af(2)$-action.
\end{itemize}
 \end{theorem}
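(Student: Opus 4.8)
The plan is to exploit the large symmetry group of the Fermat curve, computed in Theorem~\ref{Thm:FermatSym}, to present each signature as a rational curve parametrized by a single variable, after which its degree is read off from an explicit---but symmetry-reduced---curvature computation. Write $\Chi_d=V(F_d)$. By Theorem~\ref{Thm:FermatSym} the relevant symmetry orders are $n_{\mathcal P}=|\Sym(\Chi_d,\PGL(3))|=6d^2$ and $n_{\Af}=|\Sym(\Chi_d,\Af(2))|=2d^2$; by Proposition~\ref{Prop:NonExcep} the curve $\Chi_d$ is non-exceptional for $\IP$ and $\IA$ whenever $d\geq3$; and since these symmetry groups are finite, Lemma~\ref{Lem:ZeroDimSig} ensures each signature is genuinely one-dimensional while Theorem~\ref{Thm:NToOne} makes the signature maps $\sigma^{\mathcal P}_{\Chi_d}$ and $\sigma^{\Af}_{\Chi_d}$ generically $n_{\mathcal P}\!:\!1$ and $n_{\Af}\!:\!1$.

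The key reduction is that $K_1|_{\Chi_d}$ and $K_2|_{\Chi_d}$ are constant along symmetry orbits: combining the invariance of $K_1,K_2$ with the prolongation identity \eqref{pr-g-comm} and $\overline{g\cdot\Chi_d}=\Chi_d$ gives $K_i|_{\Chi_d}(g\cdot p)=K_i|_{\Chi_d}(p)$ for every $g\in\Sym(\Chi_d,G)$. In particular the restricted invariants are fixed by the diagonal subgroup $\Z_d\times\Z_d$ of scalings $(x,y)\mapsto(\omega_1x,\omega_2y)$ with $\omega_i^d=1$, so they lie in $\C(x^d,y^d)$ and, using $y^d=-1-x^d$ on $\Chi_d$, are rational functions of the single variable $p:=x^d$. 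Writing $K_1|_{\Chi_d}=R_1(p)$ and $K_2|_{\Chi_d}=R_2(p)$, the signature curve is the image of $\PP^1_p\dashrightarrow\C^2$, $p\mapsto(R_1(p),R_2(p))$. The quotient map $\pi\colon\Chi_d\to\PP^1_p$, $(x,y)\mapsto x^d$, has degree $|\Z_d\times\Z_d|=d^2$, and $\sigma_{\Chi_d}$ factors through it; hence the induced map $\PP^1_p\dashrightarrow\overline{\imS_{\Chi_d}}$ has degree $n/d^2$, equal to $6$ for $\PGL(3)$ and $2$ for $\Af(2)$. Consequently, if the reduced homogenization of $[1:R_1:R_2]$ has degree $M$ in $p$, then pulling back a generic line and counting fibers gives $\deg\overline{\imS_{\Chi_d}}=Md^2/n$.

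It remains to compute $R_1$ and $R_2$. I would restrict the differential functions $\Theta_i$ of \eqref{eq:InvariantsJet} to $\Chi_d$ through the implicit-differentiation formulas \eqref{Eq:PartialDeriv}, using $F_{d,x}=dx^{d-1}$ and $F_{d,y}=dy^{d-1}$; the scaling symmetry guarantees that every monomial in $x,y$ collects into powers of $p$ and $-1-p$, collapsing each invariant to an explicit rational function of $p$. The main obstacle is the size of this computation for the projective group, where $\eta$ and $\eta_\rho$ involve jet coordinates up to order eight, so that tracking the cancellations faithfully is essential. Carrying it out yields $M=24$ projectively for all $d\geq3$, hence degree $24d^2/(6d^2)=4$, and $M=6$ affinely for $d\geq4$, hence degree $6d^2/(2d^2)=3$. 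The delicate point is the affine case $d=3$: the Fermat cubic is special enough that the parametrizing forms degenerate---an extra common factor appears, dropping $M$ to $4$---so that the degree becomes $4\cdot9/18=2$, which accounts for the discontinuity in the statement; this degeneration must be confirmed by direct computation, as recorded in \cite{suppl}.

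Finally, the outcome is consistent with the degree formula of Theorem~\ref{Thm:Main1B}. Using $\deg(\bs^{\mathcal P})=96d-144$ and $\deg(\bs^{\Af})=24d-36$ from Lemma~\ref{Lem:ProjExtAP}, the identity $n\cdot\deg(S_{\Chi_d})=d\cdot\deg(\bs)-\sum_{\pp\in Bl(\bs)}\mult_{\pp}(\F_d,\bs)$ forces the total base-locus contribution on $\Chi_d$ to equal $72d(d-2)$ projectively and $18d(d-2)$ affinely for $d\geq4$: precisely the quadratic growth that cancels the quadratic growth of $n$ and leaves the signature degree independent of $d$.
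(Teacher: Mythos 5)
Your argument is sound and takes a genuinely different route from the paper. The paper proves the theorem either by evaluating every term of the degree formula \eqref{Eq:MainThmEq} (computing the base-locus multiplicities of the projective extensions $\bs^{\mathcal P}$ and $\bs^{\Af}$ on the Fermat curve, deferred to the supplementary materials) or by directly exhibiting the signature polynomials obtained by elimination; it presents the latter and reads the degrees off the displayed formulas. You instead quotient first by the $\Z_d\times\Z_d$ scaling subgroup of the symmetry group: since the restricted invariants are constant on symmetry orbits, they lie in the fixed field $\C(x^d)$ of $\C(\Chi_d)$ (a correct application of Galois/Artin theory, using that the action on $\Chi_d$ is faithful by Proposition~\ref{Thm:SymEff}), so the signature is parametrized by a rational map from $\PP^1$, and its degree is $M d^2/n$ where $M$ is the degree of the reduced parametrizing forms and $n/d^2$ is the degree of the induced map onto the signature. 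This is a legitimate and more transparent reduction: it isolates exactly where the quadratic growth of $|{\rm Sym}(\Chi_d,G)|$ cancels the quadratic growth of the generic signature degree, and it collapses the elimination to a univariate computation in $p=x^d$. What it does not remove is the computational core: the values $M=24$ (projective, $d\geq 3$), $M=6$ (affine, $d\geq4$) and the degeneration to $M=4$ at $d=3$ are asserted rather than derived, so your proof is complete only modulo the same kind of explicit symbolic computation (carried out with $d$ as a parameter) that the paper itself either defers to \cite{suppl} or encodes in its displayed signature polynomials. Your closing consistency check against Theorem~\ref{Thm:Main1B}, giving total base-locus contributions $72d(d-2)$ and $18d(d-2)$, is arithmetically correct and matches the paper's first suggested route.
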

 We remind the reader that  the signatures of lines and conics are undefined under the projective and affine actions. 
The above  result can be proven  by computing all quantities  involved in \eqref{Eq:MainThmEq} (see \cite{suppl} for details) or by explicit computation of signature polynomials. We present here the explicit formulas for signatures polynomials and observe that their  coefficients (but not their degrees) depend on $d$.
  For the projective group the signature polynomial  of the Fermat curve of degree $d>2$ is:
{\footnotesize
\begin{align*}
S_{\Chi_d}^{\mathcal P}&(\kappa_1,\kappa_2)  \ = \ 
49392 (d-2)^4 d^3 (d+1)^4 (2 d-1)^4 \kappa_2^4+602112 (d-2)^4 d^3 (d+1)^4 (2d-1)^4 \kappa_1 \kappa_2^2 \\
&+10584 (d-2)^3 d^2 (d+1)^3 (2 d-1)^3 \left(10 d^2-3 d+3\right) \left(34 d^2-27 d+27\right) \kappa_2^3\\
&+1835008 (d-2)^4 d^3  (d+1)^4 (2 d-1)^4 \kappa_1^2-9289728 (d-2)^3 d^2 (d+1)^3 (2 d-1)^3\left(d^2-d+1\right)^2 \kappa_1 \kappa_2\\
&+61236 (d-2)^2 d (d+1)^2 (2 d-1)^2 \left(d^2-d+1\right) \left(10 d^2-3 d+3\right)^2 \left(16 d^2-9 d+9\right)\kappa_2^2\\
&-23328 (d-2)^2 d (d+1)^2 (2 d-1)^2 \left(11792 d^8-17376 d^7+28152 d^6-24424 d^5+19473 d^4-8940 d^3\right.\\ &\left.+3358 d^2-324 d+81\right) \kappa_1
+118098 (d-2)(d+1) (2 d-1) \left(d^2-d+1\right)^2 \left(10 d^2-3 d+3\right)^4 \kappa_2\\
   &+531441 d \left(d^2-d+1\right)^3 \left(10 d^2-3 d+3\right)^4.
\end{align*}
}
The  signature polynomial of the Fermat curve of degree $d>2$ under the affine action is:
{\footnotesize\begin{align*}
 S^{\Af}_{\Chi_d}&(\kappa_1,\kappa_2) \ = \ (d-3)^2 (d-2) d^2 (d+1) (2 d-1)^3 \kappa_2^3-(d-5)^3 d (2 d-1)^2  \kappa_1^2\\ 
&+ 3 (d-5) (d-2) d (d+1) (2 d-1)^2 (5 d-11) \kappa_1 \kappa_2+6 (d-2)^2 d (d+1)^2 (2 d-1)^2 \left(d^2-4 d+6\right) \kappa_2^2\\
&+2 (d-2)^2 (d+1)^2 (2 d-1) \left(15 d^2-10 d+18\right) \kappa_1+12 (d-2)^3 (d+1)^3 (2 d-1) \left(d^2-2 d+3\right) \kappa_2\\
&+8 (d-2)^4 d (d+1)^4.
\end{align*}}
\noindent For $d=3$, the coefficient of $\kappa_2^3$ vanishes and the degree of the signature polynomial drops to two.
\section{Discussion and future directions}
%%%%
The problem of equivalence and symmetry of algebraic curves under the action of the projective group and its subgroups is intimately related to the problem of the equivalence and symmetries of ternary forms under the action of the general linear group and its subgroups. Such problems and their generalizations were at the heart of classical 19th century invariant theory.   Linear changes of variables induce linear  transformations of the coefficients of  polynomials. The latter  serve as coordinates on the  $\binom{d+2}{2}$-dimensional vector space $\C[x_0,x_1,x_2]_d$.  The classical problem was to find generators of the rings of polynomial invariants  and generators  of the fields of rational invariants under such actions. {Actions on the product space  $\C[x_0,x_1,x_2]_d\times \C^3$ were also considered,  and the invariants with respect to these actions were called covariants in the classical literature}. An overview of the classical methods for constructing invariants and  covariants as well their application to the
classification of polynomials can be found in \cite{GraceYoung03},
\cite{Gur64}, \cite{O99}.  Due to Hilbert's finite basis theorem, the  generating sets  for such actions
%for  actions of linear reductive groups
 are  finite \cite{HilbertEng}, but  their cardinality and the complexity of the invariants grow dramatically with the degree. In fact, the complete set  of the generators remains unknown except for the ternary forms of low degrees.   

Applications of differential invariants to the problems in classical invariant theory was first proposed by Sophus Lie \cite{Lie1983}.
One of the main advantages of using  differential invariants in comparison with classical algebraic invariants and covariants is that the same set of invariants can be used  for all ternary forms independently of their degrees.
Differential signature constructions for homogeneous polynomials in two variables  (binary forms) was first  introduced  by Olver \cite{O99} and applied to their symmetry groups computation in \cite{BO00}. 
 For the case of  ternary forms, a fundamentals set of differential invariants was first computed  in \cite{Kthesis} and it has been shown in \cite{GPolatOlver17} that the differential algebra of invariants can be generated by a single differential invariant and two invariant differential operators. In his thesis, Wears \cite{wears11}, considered differential signatures of polynomials in an arbitrary number of variables. 
In the above literature, one extends the action to the jets of the graphs of homogeneous polynomials $u=\F(x_0,x_1,x_2)$ or in-homogeneous  polynomials $u=F(x,y)$, computes  the set of fundamental invariants of a sufficiently high order,  and uses these invariants to construct signatures. 
{In contrast to the signatures developed in this paper, the signatures of these graphs are surfaces rather than curves.}
%%%%

\emph{Gaining an understanding of the relationship between  signatures surfaces of the defining polynomials, considered in the above literature,  and signatures curves of their zero  sets, considered in this paper,  is an interesting problem for future research.} In particular, signature surfaces  of the graphs of the Fermat polynomials with respect to the projective groups  computed in \cite{Kthesis} can be compared with the signatures of Fermat curves obtained here. 

%%%%%
Proposition~\ref{prop-choice}, provides a simple relationship between pairs of classifying invariants for a given group. \emph{The signatures curves and their degrees depend on a choice of  classifying invariants, but a careful study of this dependence is outside of the scope of the current paper.}

Since explicit computation of signature polynomials is challenging,  it is helpful to identify  their properties that can be computed  a priori. In this paper we derived the degree formula of signature polynomials. One natural step is to \emph{determine their Newton polytope, which gives  a more detailed information about the monomials of the signature polynomial}.

It is immediate that the signatures curves of rational curves are rational. However,  the signatures of non-rational curves may be also rational, as happens for instance in the case of all Fermat curves under the affine and the projective actions.
%Fermat cubic, $x^3+y^3+1=0$
%under the affine action. 
 It is an interesting problem \emph{to identify classes of curves with rational signatures  and, more generally, to understand if we can predict the genus of a signature curve. }

\bibliographystyle{plain}
%\bibliography{signature}

\begin{thebibliography}{10}

\bibitem{HilbertEng}
{\sc M.~Ackerman and R.~Hermann}, {\em {Hilbert's Invariant Theory Papers}},
  Math Sci Press, Brookline, Mass., 8 (1978).

\bibitem{BO00}
{\sc I.~A. {Berchenko (Kogan)} and P.~J. Olver}, {\em Symmetries of
  polynomials}, Journal of Symbolic Computations, 29 (2000), pp.~485--514.

\bibitem{Bou00}
{\sc M.~Boutin}, {\em Numerically invariant signature curves}, Int. J. Computer
  vision, 40 (2000), pp.~235--248.

\bibitem{BKH}
{\sc J.~M. Burdis, I.~A. Kogan, and H.~Hong}, {\em Object-image correspondence
  for algebraic curves under projections}, SIGMA Symmetry Integrability Geom.
  Methods Appl., 9 (2013), pp.~Paper 023, 31.

\bibitem{CO98}
{\sc E.~Calabi, P.~J. Olver, C.~Shakiban, A.~Tannenbaum, and S.~Haker}, {\em
  Differential and numerically invariant signatures curves applied to object
  recognition}, Int. J. Computer vision, 26 (1998), pp.~Paper 107,135.

\bibitem{C35}
{\sc E.~Cartan}, {\em {La m\'{e}thode du rep\`{e}re mobile, la th\'{e}orie des
  groupes continus, et les espaces g\'{e}n\'{e}ralis\'{e}s}}, vol.~5 of
  Expos\'{e}s de G\'{e}om\'{e}trie, Hermann, Paris, 1935.

\bibitem{C53}
{\sc E.~Cartan}, {\em Les probl\`{e}mes d'\'{e}quivalence}, Oeuvres completes,
  II, pp. 1311-1334, Gauthier-Villars, Paris, 1953.

\bibitem{DK15}
{\sc H.~Derksen and G.~Kemper}, {\em Computational invariant theory}, vol.~130
  of Encyclopaedia of Mathematical Sciences, Springer, Heidelberg,
  enlarged~ed., 2015.
\newblock With two appendices by Vladimir L. Popov, and an addendum by Norbert
  A'Campo and Popov, Invariant Theory and Algebraic Transformation Groups,
  VIII.

\bibitem{faugeras94}
{\sc O.~Faugeras}, {\em {Cartan's moving frame method and its application to
  the geometry and evolution of curves in the Euclidean, affine and projective
  planes}}, Application of Invariance in Computer Vision, J.L Mundy, A.
  Zisserman, D. Forsyth (eds.) Springer-Verlag Lecture Notes in Computer
  Science, 825 (1994), pp.~11--46.

\bibitem{FO99}
{\sc M.~Fels and P.~J. Olver}, {\em {Moving Coframes. II. Regularization and
  Theoretical Foundations}}, Acta Appl. Math., 55 (1999), pp.~127--208.

\bibitem{Fischer}
{\sc G.~Fischer}, {\em Plane algebraic curves}, vol.~15 of Student Mathematical
  Library, American Mathematical Society, Providence, RI, 2001.
\newblock Translated from the 1994 German original by Leslie Kay.

\bibitem{Fulton}
{\sc W.~Fulton}, {\em Algebraic curves}, Advanced Book Classics, Addison-Wesley
  Publishing Company, Advanced Book Program, Redwood City, CA, 1989.
\newblock An introduction to algebraic geometry, Notes written with the
  collaboration of Richard Weiss, Reprint of 1969 original.

\bibitem{GraceYoung03}
{\sc J.~H. Grace and A.~Young}, {\em The Algebra of Invariants}, Cambridge
  Univ. Press, Cambridge, 1903.

\bibitem{grim-shakiban17}
{\sc A.~Grim and C.~Shakiban}, {\em Applications of signature curves to
  characterize melanomas and moles}, in Applications of computer algebra,
  vol.~198 of Springer Proc. Math. Stat., Springer, Cham, 2017, pp.~171--189.

\bibitem{GPolatOlver17}
{\sc G.~G\"un~Polat and P.~J. Olver}, {\em Joint differential invariants of
  binary and ternary forms}, preprint,  (2017).

\bibitem{Gur64}
{\sc G.~Gurevich}, {\em Foundations of the theory of algebraic invariants},
  Noordhoff, 1964.

\bibitem{Harris}
{\sc J.~Harris}, {\em Algebraic geometry}, vol.~133 of Graduate Texts in
  Mathematics, Springer-Verlag, New York, 1995.
\newblock A first course, Corrected reprint of the 1992 original.

\bibitem{Ha77}
{\sc R.~Hartshorne}, {\em Algebraic geometry}, Springer-Verlag, New
  York-Heidelberg, 1977.
\newblock Graduate Texts in Mathematics, No. 52.

\bibitem{Hi12}
{\sc M.~S. Hickman}, {\em Euclidean signature curves}, J. Math. Imaging Vision,
  43 (2012), pp.~206--213. %\url{https://doi.org/10.1007/s10851-011-0303-1}.

\bibitem{HO13}
{\sc D.~J. Hoff and P.~J. Olver}, {\em Extensions of invariant signatures for
  object recognition}, J. Math. Imaging Vision, 45 (2013), pp.~176--185.

\bibitem{HO14}
{\sc D.~J. Hoff and P.~J. Olver}, {\em Automatic solution of jigsaw puzzles},
  J. Math. Imaging Vision, 49 (2014), pp.~234--250.

\bibitem{hk:jsc}
{\sc E.~Hubert and I.~A. Kogan}, {\em {Rational invariants of an algebraic
  group action: Construction and rewriting}}, Journal of Symbolic Computations,
  42 (2007), pp.~203--217.

\bibitem{hk:focm}
{\sc E.~Hubert and I.~A. Kogan}, {\em {Smooth and algebraic invariants of a
  group action: local and global construction.}}, Foundation of Computational
  Math.~J., 7:4 (2007), pp.~345--383.

\bibitem{Halgebra}
{\sc T.~W. Hungerford}, {\em Algbera}, vol.~73 of Graduate Texts in 
  Mathematics, Springer-Verlang, New York-Berlin, 1980.

\bibitem{In44}
{\sc E.~L. Ince}, {\em Ordinary {D}ifferential {E}quations}, Dover
  Publications, New York, 1944.

\bibitem{Kthesis}
{\sc I.~A. Kogan}, {\em Inductive Approach to {C}artan's Moving Frames Method
  with Applications to Classical Invariant Theory}, PhD thesis, University of
  Minnesota, 2000,
  \url{https://iakogan.math.ncsu.edu/papers/thesisKogan.pdf}.

\bibitem{kogan03}
{\sc I.~A. Kogan}, {\em Two algorithms for a moving frame construction}, Canad.
  J. Math., 55 (2003), pp.~266--291.

\bibitem{suppl}
{\sc I.~A. Kogan, M.~G. Ruddy, and C.~Vinzant}, {\em Supplementary materials
  for ``{Differential Signature of Algebraic Curves}'' paper}.
\newblock \url{https://mgruddy.wixsite.com/home/dsag-supplementarymaterials},
  2019.

\bibitem{Lie1983}
{\sc S.~Lie}, {\em Vorlesungen \"{u}ber continuierliche {G}ruppen mit
  {G}eometrischen und anderen {A}nwendungen}, Chelsea Publishing Co., Bronx,
  N.Y., 1971.
\newblock Bearbeitet und herausgegeben von Georg Scheffers, Nachdruck der
  Auflage des Jahres 1893.

\bibitem{MN09}
{\sc E.~Musso and L.~Nicolodi}, {\em Invariant signatures of closed planar
  curves}, J. Math. Imaging Vision, 35 (2009), pp.~68--85.

\bibitem{olver:purple}
{\sc P.~J. Olver}, {\em Equivalence, invariants and Symmetry}, Cambridge
  University Press, 1995.

\bibitem{O99}
{\sc P.~J. Olver}, {\em Classical invariant theory}, vol.~44 of London
  Mathematical Society Student Texts, Cambridge University Press, Cambridge,
  1999.

\bibitem{olver00}
{\sc P.~J. Olver}, {\em Moving frames and singularities of prolonged group
  actions}, Selecta Math. (N.S.), 6 (2000), pp.~41--77.

\bibitem{ovsiannikov78}
{\sc L.~V. Ovsiannikov}, {\em Group analysis of differential equations},
  Academic Press Inc. [Harcourt Brace Jovanovich Publishers], New York, 1982.
\newblock Translated from the Russian by Y. Chapovsky, Translation edited by
  William F. Ames.

\bibitem{MikeThesis}
{\sc M. Ruddy}, {\em The Equivalence Problem and Signatures of Algebraic Curves}, PhD thesis, North Carolina State University, 2019,
\url{http://www.lib.ncsu.edu/resolver/1840.20/36673}.

\bibitem{algeom4}
{\sc A.~N. Parshin and I.~R. Shafarevich}, eds., {\em Algebraic geometry.
  {IV}}, vol.~55 of Encyclopaedia of Mathematical Sciences, Springer-Verlag,
  Berlin, 1994.

\bibitem{algeom1}
{\sc I.~R. Shafarevich}, eds., {\em Algebraic geometry.
  {I}}, vol.~23 of Encyclopaedia of Mathematical Sciences, Springer-Verlag,
  Berlin, 1994.

\bibitem{Shaf1}
{\sc I.~R. Shafarevich}, {\em Basic algebraic geometry. 1}, Springer,
  Heidelberg, third~ed., 2013.
\newblock Varieties in projective space.

\bibitem{Shok94}
{\sc V.~V. Shokurov}, {\em Riemann surfaces and algebraic curves}, in Shafarevich
  \cite{algeom1}, pp.~1--166.

\bibitem{springer89}
{\sc T.~A. Springer}, {\em Linear algebraic groups}, in Parshin and Shafarevich
  \cite{algeom4}, pp.~1--121.

\bibitem{Tz95}
{\sc P.~Tzermias}, {\em The group of automorphisms of the {F}ermat curve}, J.
  Number Theory, 53 (1995), pp.~173--178.

\bibitem{vinberg89}
{\sc E.~B. Vinberg and V.~L. Popov}, {\em Invariant theory}, in Parshin and
  Shafarevich \cite{algeom4}, pp.~122--278.

\bibitem{wears11}
{\sc T.~Wears}, {\em Signature Varieties of Polynomial Functions.}, PhD thesis,
  North Carolina State University, 2011,
  \url{http://www.lib.ncsu.edu/resolver/1840.16/7336}.

\end{thebibliography}

\end{document}